\newcommand{\Bx}{\mathbf{x}}
\newcommand{\By}{\mathbf{y}}
\newcommand{\p}{\partial}
\newcommand{\GG}{\Gamma}
\newcommand{\Gs}{\sigma}
\newtheorem{thm}{Theorem}[section]
\newtheorem{lem}{Lemma}[section]
\newtheorem{prop}{Proposition}[section]
\theoremstyle{definition}
\newtheorem{defn}{Definition}[section]
\theoremstyle{remark}
\newtheorem{rem}{Remark}[section]
\numberwithin{equation}{section}
\numberwithin{equation}{section}
\newcounter{saveeqn}
\newcommand{\eqnref}[1]{(\ref {#1})}
\newcommand{\Bd}{\mathbf{d}}
\newcommand{\Bn}{\mathbf{n}}
\newcommand{\Bz}{\mathbf{z}}
\newcommand{\tdx}{\tilde{\Bx}}
\newcommand{\tdy}{\tilde{\By}}
\newcommand{\Kcal}{\mathcal{K}}
\newcommand{\Scal}{\mathcal{S}}
\newcommand{\Dcal}{\mathcal{D}}
\newcommand{\Ocal}{\mathcal{O}}
\newcommand{\la}{\langle}
\newcommand{\ra}{\rangle}
\newcommand{\RR}{\mathbb{R}}
\newcommand{\beq}{\begin{equation}}
\newcommand{\eeq}{\end{equation}}
\DeclareMathAlphabet{\itbf}{OML}{cmm}{b}{it}
\title[Regularized Full- and Partial-cloaks]{On Regularized Full- and Partial-Cloaks in Acoustic Scattering}
\author{Youjun Deng}
\address{School of Mathematics and Statistics, Central South University, Changsha, Hunan, P. R. China.}
\email{youjundeng@csu.edu.cn, dengyijun\_001@163.com}
\author{Hongyu Liu}
\address{Department of Mathematics, Hong Kong Baptist University, Kowloon Tong, Hong Kong SAR.\vspace*{-4mm}}
\address{\vspace*{-4mm}and}
\address{HKBU Institute of Research and Continuing Education, Virtual University Park, Shenzhen, P. R. China.}
\email{hongyu.liuip@gmail.com}
\author{Gunther Uhlmann}
\address{Department of Mathematics, University of Washington, Seattle, WA 98195, USA.}
\email{gunther@math.washington.edu}
\begin{document}
\maketitle

\begin{abstract}
The aim of this work is to derive sharp quantitative estimates of the qualitative convergence results developed in \cite{LiLiuRonUhl} for regularized full- and partial-cloaks via the transformation-optics approach. Let $\Gamma_0$ be a compact set in $\mathbb{R}^3$ and $\Gamma_\delta$ be a $\delta$-neighborhood of $\Gamma_0$ for $\delta\in\mathbb{R}_+$. $\Gamma_\delta$ represents the virtual domain used for the blow-up construction. By incorporating suitably designed lossy layers, it is shown that if the generating set $\Gamma_0$ is a generic curve, then one would have an approximate full-cloak within $\delta^2$ to the perfect full-cloak; whereas if $\Gamma_0$ is the closure of an open subset on a flat surface, then one would have an approximate partial-cloak within $\delta$ to its perfect counterpart. The estimates derived are independent of the contents being cloaked; that is, the cloaking devices are capable of nearly cloaking an arbitrary content. Furthermore, as a significant byproduct, our argument allows the relaxation of the convexity requirement on $\Gamma_0$ in \cite{LiLiuRonUhl}, which is critical for the Mosco convergence argument therein.

\medskip

\medskip

\noindent{\bf Keywords:}~~scattering, invisibility cloaking, transformation optics, partial and full cloaks, regularization, asymptotic estimates

\noindent{\bf 2010 Mathematics Subject Classification:}~~35Q60, 35J05, 31B10, 35R30, 78A40

\end{abstract}

\section{Introduction}

This work concerns the invisibility cloaking for time-harmonic scalar waves governed by the Helmholtz system via the approach of transformation optics. A proposal for cloaking for electrostatics using the invariance properties
of the conductivity equation was pioneered in \cite{GLU,GLU2}.
Blueprints for making objects invisible to electromagnetic (EM) waves were proposed in two articles in {\it Science} in 2006 \cite{Leo,PenSchSmi}. The article by Pendry et al uses the same transformation used in \cite{GLU,GLU2} while the work of Leonhardt uses a conformal mapping in two dimensions. The method based on the invariance properties of the equations modelling the wave phenomenon has been named
{\sl transformation optics}. There have been several other proposals for cloaking \cite{AE,MN}. The transformation optics has received a lot of attention in the scientific community and the popular press because of the generality of the method; see \cite{CC,GKLU4,GKLU5,LiuUhl,U2} for comprehensive surveys on the theoretical and experimental progress.

Using the transformation optics, the construction of an ideal cloak is based on blowing up a point in the virtual space which splits a `hole' in the physical space to form the cloaked region, whereas the ambient space is `compressed' via the {\it push-forward} to form the cloaking region. In a similar fashion cloaking devices based on blowing up a {\it crack} (namely, a curve in $\mathbb{R}^3$) or a {\it screen} (namely, a flat surface in $\mathbb{R}^3$) were, respectively, considered in \cite{GKLU2,GKLU1} and \cite{LiP}, resulting in the so-called EM wormhole and carpet-cloak respectively.
The blow-up-a-point (respectively, -crack or -screen) construction yields singular cloaking materials, namely, the material parameters violate the physical regular conditions. The singular media present a great challenge for both theoretical analysis and practical fabrications. In order to avoid the singular structures, several regularized constructions have been developed in the literature. In \cite{GKLUoe,GKLU_2,RYNQ}, a truncation of singularities has been introduced. In \cite{KOVW,KSVW,Liu}, the blow-up-a-point transformation in \cite{GLU2,Leo,PenSchSmi} was regularized to become the `blow-up-a-small-region' transformation. Nevertheless, it is pointed out in \cite{KocLiuSun} that the truncation-of-singularities construction and the blow-up-a-small-region construction are equivalent to each other.  Instead of the ideal invisibility, one would consider approximate/near invisibility for a regularized construction; that is, one intends to make the corresponding wave scattering effect due to a regularized cloaking device as small as possible depending on an asymptotically small regularization parameter $\delta\in\mathbb{R}_+$.

Due to its practical importance, the regularized cloaks have been extensively investigated in the literature; see \cite{Ammari2,Ammari3,Ammari4,BL,BLZ,GKLUr,KOVW,KSVW,LiuSun,LiuZhou} and the references therein. In a recent work \cite{LiLiuRonUhl}, a mathematical framework on constructing regularized full- and partial-cloaks for time-harmonic scalar waves governed by the Helmholtz system was developed, extending the existing study in the literature to the most general case.
There are three major ingredients of the general cloaking scheme in \cite{LiLiuRonUhl}. The cloaking medium is obtained by blowing up a virtual domain $\Gamma_\delta$, which is a $\delta$-neighborhood of a compact convex generating set $\Gamma_0$. Second, a compatible lossy layer should be employed right between the cloaked and cloaking regions, which is indispensable of enabling the corresponding construction to nearly cloak an arbitrary optical contents. Third, based on some qualitative convergence analysis, it is shown that if the generating set $\Gamma_0$ has a zero capacity, then one would have an approximate full-cloak; whereas if the generating set $\Gamma_0$ has a non-zero capacity, then one would have an approximate partial-cloak. Here, by a full-cloak, we mean that the invisibility holds for incident waves coming from every possible direction and observation made in every possible angle, and otherwise the cloaking device is called a partial-cloak.

Clearly, quantitatively estimating the convergence results in \cite{LiLiuRonUhl} would yield a much more accurate characterization of the regularized cloaking constructions. Most of the available results in the literature mentioned earlier on quantitatively characterizing the degree of approximation to the ideal cloak are mainly concerned with the case that the generating set $\Gamma_0$ is a single point. For the other cases, the corresponding mathematical study in deriving sharp quantitative estimates were fraught with significant technical difficulties and were lacked in the literature. This work aims at dealing with this challenging issue. Specifically, by letting $\Gamma_0$ and $\Gamma_\delta$, $\delta\in\mathbb{R}_+$, respectively denote the generating set and virtual domain, we show that if $\Gamma_0$ is a generic curve, then one would have an approximate full-cloak within $\delta^2$ to the perfect full-cloak; whereas if $\Gamma_0$ is the closure of an open subset on a flat surface, then one would have an approximate partial-cloak within $\delta$ to its perfect counterpart. The estimates derived are independent of the contents being cloaked; that is, the cloaking devices are capable of nearly cloaking arbitrary contents. We make essential use of variational and layer potential techniques for our study. Some of the results derived in the process are of significant theoretical and practical interests for their own sake. Furthermore, as a significant byproduct, our argument allows the relaxation of the convexity requirement on $\Gamma_0$ in \cite{LiLiuRonUhl}, which is critical for the Mosco convergence argument therein.

The rest of the paper is organized as follows. In the next section, preliminaries on acoustic scattering and
transformation-optics cloaking are presented. In section 3, we introduce the layer
potentials and briefly discuss their mapping properties. Section 4 is devoted to showing the sharp quantitative estimates
of the regularized full-cloak when the generating set is a generic curve.
In section 5, sharp estimate on the regularized partial-cloak when the generating set is a flat subset. The main results in this paper are
given in Theorems \ref{th:main1} and \ref{th:main2}.

\section{Wave scattering and transformation-optics cloaking}

Let $q\in L^\infty(\mathbb{R}^3)$ be a (complex valued) scalar function and $\sigma=(\sigma^{jl})\in L^\infty(\mathbb{R}^3)^{3\times 3}$ be a real symmetric-positive-definite matrix-valued function. It is assumed that there exists $0<\lambda\leq 1$ such that
\begin{equation}\label{eq:regular}
\Re q(\mathbf{x})\geq \lambda,\ \Im q(\mathbf{x})\geq 0;\ \ \ \lambda\, \mathbf{I}_{3\times 3}\leq \sigma(\mathbf{x})\leq \lambda^{-1}\, \mathbf{I}_{3\times 3}\ \ \mbox{for a.e. $\mathbf{x}\in\mathbb{R}^3$},
\end{equation}
where $\mathbf{I}_{3\times 3}$ is the $3\times 3$ identity matrix. Physically, $q$ is the modulus and $\sigma^{-1}$ is the density tensor of an acoustic medium in $\mathbb{R}^3$. It is always assumed that the acoustic inhomogeneity is compactly supported, i.e., there is a bounded domain $\Omega\subset\mathbb{R}^3$ such that $q=1$ and $\sigma=\mathbf{I}_{3\times 3}$ in $\mathbb{R}^3\backslash\Omega$. We write $\{\Omega; \sigma, q\}$ to signify an acoustic medium as described above. \eqref{eq:regular} is the {\it regular condition} for the acoustic medium. In scattering theory, time-harmonic plane waves of the form $u^i(\mathbf{x}; \omega,\mathbf{d}):=e^{i\omega\mathbf{x}\cdot\mathbf{d}}$ impinge on the acoustic medium $\{\Omega; \sigma, q\}$ from the direction $\mathbf{d}\in\mathbb{S}^2:=\{\mathbf{x}\in\mathbb{R}^3; |\mathbf{x}|=1\}$. Here, $\omega\in\mathbb{R}_+$ denotes the wave number of the time-harmonic wave. The corresponding total wave field $u(\mathbf{x}; \omega, \mathbf{d})\in\mathbb{C}$, $\mathbf{x}:=(x_j)_{j=1}^3\in\mathbb{R}^3$, is governed by the following Helmholtz system
\begin{equation}\label{eq:Helm}
\begin{cases}
\ \displaystyle{\sum_{j,l=1}^{3}\frac{\partial}{\partial x_j}\left(\sigma^{jl}(\mathbf{x})\frac{\partial}{\partial x_l} u(\mathbf{x}; \mathbf{d}, \omega)\right)+{\omega^2} q(\mathbf{x}) u(\mathbf{x}; \mathbf{d}, \omega)}=0,\ \ \mathbf{x}\in\mathbb{R}^3,\\
\ \mbox{$u^+(\mathbf{x}):=u(\mathbf{x})-u^i(\mathbf{x})$ satisfies the Sommerfeld radiation condition. }
\end{cases}
\end{equation}
By the Sommerfeld radiation condition, we mean that the scattered wave $u^+(\mathbf{x})$ satisifies
\begin{equation}\label{eq:rad}
\lim_{|\mathbf{x}|\rightarrow +\infty}|\mathbf{x}|\left(\frac{\partial u^+(\mathbf{x})}{\partial |\mathbf{x}|}-i\omega u^+(\mathbf{x})\right)=0,
\end{equation}
which holds uniformly in $\hat{\mathbf{x}}:=\mathbf{x}/|\mathbf{x}|\in\mathbb{S}^2$. The radiation condition characterizes the decaying property of the scattered wave at infinity. The forward scattering problem \eqref{eq:Helm} has a unique solution $u\in H_{loc}^1(\mathbb{R}^3)$ (cf. \cite{Mcl}) and moreover, as $|\mathbf{x}|\rightarrow+\infty$ (cf. \cite{CK})
\begin{equation}\label{eq:scattering1}
u(\mathbf{x}; \mathbf{d}, \omega)=e^{i\omega \mathbf{x}\cdot \mathbf{d}}+\frac{e^{i\omega |\mathbf{x}|}}{|\mathbf{x}| } u_\infty(\hat{\mathbf{x}}, \mathbf{d})+\mathcal{O}\left(\frac{1}{|\mathbf{x}|^{2}}\right ),
\end{equation}
which holds uniformly in $\hat{\mathbf{x}}\in\mathbb{S}^2$. $u_\infty(\hat{\mathbf{x}},\mathbf{d})$ is known as the {\it scattering amplitude}, and it is real-analytic with resect to $\hat{\mathbf{x}}$ and $\mathbf{d}$, which are respectively referred to as the incident direction and observation angle. Clearly, $u_\infty$ depends on the incident wave field $e^{i\omega\mathbf{x}\cdot\mathbf{d}}$ as well as the acoustic medium $\{\Omega; \sigma, q\}$. In what follows, we shall also write $u_\infty(\hat{\mathbf{x}}, \mathbf{d}; \{\Omega; \sigma, q\})$ to signify such dependences, noting that we have dropped the dependence on $\omega$. $u_\infty(\hat{\mathbf{x}}, \mathbf{d}; \{\Omega; \sigma, q\})$ for all $(\hat{\mathbf{x}},\mathbf{d})\in\mathbb{S}^2\times\mathbb{S}^2$ carries the optical information of the acoustic inhomogeneity $\{\Omega;\sigma, q\}$. An important inverse scattering problem arising in practical applications is to recover the medium $\{\Omega; \sigma, q\}$ by knowledge of $u_\infty(\hat{\mathbf{x}},\mathbf{d})$.  This inverse problem is of fundamental importance to many areas of science and technology, such as radar and sonar, geophysical exploration, non-destructive testing, and medical imaging to name just a few; see \cite{CK,Isa,U2} and the references therein. In this context, an ideal or perfect invisibility cloak is introduced as follows.

\begin{defn}\label{def:cloak}
Let $\Omega_e$ and $\Omega_i$ be bounded domains such that $\Omega_i\Subset\Omega_e$.
$\Omega_e\backslash\overline{\Omega_i}$ and $\Omega_i$ signify, respectively, the
cloaking region and the cloaked region. Let $\Lambda_o$ and $\Lambda_i$
be two subsets of $\mathbb{S}^{2}$. $\{\Omega_e\backslash\overline{\Omega_i};
\sigma_c, q_c\}$ is said to be an (ideal/perfect) {\it invisibility
cloak} for the region $\Omega_i$ if
\begin{equation}\label{eq:definvisibility}
u_\infty\left(\hat{\mathbf{x}},\mathbf{d}; \{\Omega_e;\sigma_e,q_e\}\right)=0\quad \mbox{for}\ \
\hat{\mathbf{x}}\in\Lambda_o,\ \mathbf{d}\in\Lambda_i,
\end{equation}
where the extended medium is given as
\begin{equation}\label{eq:extended}
\{\Omega_e;\sigma_e,q_e\}=\begin{cases}
\ \sigma_a, q_a\quad & \mbox{in\ \ $\Omega_i$},\\
\ \sigma_c,q_c\quad & \mbox{in\ \ $\Omega_e\backslash\overline{\Omega_i}$},
\end{cases}
\end{equation}
with $\{\Omega_i; \sigma_a,q_a\}$ denoting a target medium located inside $\Omega_i$. If $\Lambda_o=\Lambda_i=\mathbb{S}^2$, then it is called a {\it
full cloak}, otherwise it is called a {\it partial cloak} with
limited apertures $\Lambda_o$ of observation angles, and $\Lambda_i$ of
impinging angles.
\end{defn}

By Definition~\ref{def:cloak}, one has that the cloaking layer $\{\Omega_e\backslash \overline{\Omega_i}; \sigma_c, q_c\}$ makes the target medium $\{\Omega_i;\sigma_a, q_a\}$ invisible to the exterior scattering measurements when the detecting waves come from the aperture $\Lambda_i$ and the observations are made in the aperture $\Lambda_o$. From a practical viewpoint, the cloaking device should be independent of the target medium $\{\Omega_i; \sigma_a, q_a\}$. Indeed, throughout the present study, the target medium $\{\Omega_i; \sigma_a, q_a\}$ could be arbitrary (but regular). As discussed earlier in the introduction, the cloaking medium $\{\Omega_e\backslash\overline{\Omega_i}; \sigma_c, q_c\}$ for an ideal cloak are usually singular; namely, the regular condition \eqref{eq:regular} is violated. Next, we introduce the definition of an approximate cloak by making use of regularized cloaking medium $\{\Omega_e\backslash\overline{\Omega_i}; \sigma_c^\delta, q_c^\delta\}$, which is the main theme of the current work. Here, $\delta\in\mathbb{R}_+$ signifies a regularization parameter, and $\{\Omega_e\backslash\overline{\Omega_i}; \sigma_c^\delta, q_c^\delta\}$ approaches $\{\Omega_e\backslash\overline{\Omega_i}; \sigma_c, q_c\}$ as $\delta\rightarrow +0$ in a certain sense.

\begin{defn}\label{def:cloak2}
Let $\Omega_e, \Omega_i$ and $\Lambda_o, \Lambda_i$ be as introduced in Definition~\ref{def:cloak}. Let $\{\Omega_e; \sigma_e^\delta, q_e^\delta\}$ be as defined in \eqref{eq:extended} with $\{\Omega_e\backslash \overline{\Omega_i}; \sigma_c, q_c\}$ replaced by $\{\Omega_e\backslash\overline{\Omega_i}; \sigma_c^\delta, q_c^\delta\}$. Here, $\{\Omega_e\backslash\overline{\Omega_i}; \sigma_c^\delta, q_c^\delta\}$ is as described earlier, and it is called an approximate cloak with incident aperture $\Lambda_i$ and observation aperture $\Lambda_o$ for the region $\Omega_i$ if
\begin{equation}\label{eq:definvisibility2}
u_\infty\left(\hat{\mathbf{x}},\mathbf{d}; \{\Omega_e;\sigma_e^\delta,q_e^\delta\}\right)=\mathcal{E}(\delta)\quad \mbox{for}\ \
\hat{\mathbf{x}}\in\Lambda_o,\ \mathbf{d}\in\Lambda_i,
\end{equation}
where $\mathcal{E}(\delta)$ is a positive function satisfying $\mathcal{E}(\delta)\rightarrow 0$ as $\delta\rightarrow+0$.
\end{defn}

Clearly, in Definition~\ref{def:cloak2}, $\mathcal{E}(\delta)$ characterizes the accuracy of the regularized approximate cloak. In our subsequent study, we sometime do not explicitly specify whether a cloak is ideal or approximate, and it should be clear from the context. In order to construct an invisibility cloak through the transformation-optics approach, a critical ingredient is the following transformation properties of the optical parameters (cf. \cite{GKLU3,LiLiuRonUhl}).
\begin{lem}\label{lem:transoptics}
Let $\Omega$ and $\widetilde\Omega$ be two Lipschitz domains, and suppose there exists a bi-Lipschitz and orientation-preserving mapping
\[
\widetilde{\mathbf{x}}=F(\mathbf{x}): \Omega\rightarrow\widetilde\Omega.
\]
Let $(\Omega; \sigma, q)$ be an acoustic medium supported in $\Omega$. Define the {\it push-forwarded} medium as follows,
\begin{equation}\label{eq:pushforward}
\{\widetilde\Omega; \widetilde\sigma, \widetilde q\}:=F_*\{\Omega; \sigma, q\},
\end{equation}
where
\begin{equation}
\begin{split}
&\widetilde{\sigma}(\widetilde{\mathbf{x}})=F_*\sigma(\widetilde{\mathbf{x}}):=\left({DF(\mathbf{x})\cdot \sigma(\mathbf{x})\cdot DF(\mathbf{x})^T}\right){|\mbox{\emph{det}}(DF(\mathbf{x}))|}^{-1}\bigg|_{\mathbf{x}=F^{-1}(\widetilde{\mathbf{x}})},\\
&\widetilde{q}(\widetilde{\mathbf{x}})=F_*q(\widetilde{\mathbf{x}}):={q(\mathbf{x})}{|\mbox{\emph{det}}(DF(\mathbf{x}))|}^{-1}\bigg|_{\mathbf{x}=F^{-1}(\widetilde{\mathbf{x}})},
\end{split}
\end{equation}
where $DF$ denotes the Jacobian matrix of $F$.
Then $u\in H^1(\Omega)$ solves the Helmholtz equation
\[
\nabla\cdot(\sigma\nabla u)+\omega^2 q u=0\quad \mbox{in\ $\Omega$},
\]
if and only if the pull-back field $\widetilde u=(F^{-1})^*u:=u\circ F^{-1}\in H^1(\widetilde\Omega)$ solves
\[
\widetilde\nabla\cdot(\widetilde\sigma\nabla \widetilde u)+\omega^2\widetilde q \widetilde u=0\quad \mbox{in\ $\widetilde\Omega$}.
\]
We have made use of $\nabla$ and $\widetilde\nabla$ to distinguish the
differentiations respectively in $\mathbf{x}$- and $\widetilde{\mathbf{x}}$-coordinates.

As a direct consequence, one has that if $F:\overline\Omega\rightarrow\overline\Omega$ is a bi-Lipschitz mapping such that $F|_{\partial\Omega}=\text{Id}$, then
\begin{equation}\label{eq:transequal}
u_\infty(\hat{\mathbf{x}},\mathbf{d}; \{\Omega;\sigma, q\})=u_\infty(\hat{\mathbf{x}}, \mathbf{d}; F_*\{\Omega; \sigma, q\}).
\end{equation}
\end{lem}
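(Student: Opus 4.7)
The plan is to prove Lemma~\ref{lem:transoptics} by reducing the identity of the two Helmholtz equations to the invariance of the corresponding sesquilinear form under a bi-Lipschitz change of variables, and then to derive the far-field invariance by extending the change of variables to all of $\mathbb{R}^3$ by the identity.

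First I would test the PDE in $\Omega$ against an arbitrary $\varphi\in C_c^\infty(\Omega)$ to obtain the weak formulation
\[
\int_\Omega \sigma\nabla u\cdot\nabla\overline{\varphi}\,d\mathbf{x}-\omega^2\int_\Omega q\,u\,\overline{\varphi}\,d\mathbf{x}=0.
\]
Because $F:\Omega\to\widetilde\Omega$ is bi-Lipschitz, Rademacher's theorem gives $DF\in L^\infty$ a.e., the composition map $\varphi\mapsto\widetilde\varphi=\varphi\circ F^{-1}$ is a bijection of $H^1_0(\Omega)$ onto $H^1_0(\widetilde\Omega)$, and the chain rule $\nabla u(\mathbf{x})=DF(\mathbf{x})^T\widetilde\nabla\widetilde u(F(\mathbf{x}))$ is valid a.e. I would then perform the change of variables $\widetilde{\mathbf{x}}=F(\mathbf{x})$, so that $d\mathbf{x}=|\det DF|^{-1}\,d\widetilde{\mathbf{x}}$ evaluated at $\mathbf{x}=F^{-1}(\widetilde{\mathbf{x}})$. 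Substituting,
\[
\sigma\nabla u\cdot\nabla\overline\varphi
=\bigl(DF\,\sigma\,DF^T\bigr)\widetilde\nabla\widetilde u\cdot\widetilde\nabla\overline{\widetilde\varphi},
\]
and combining with the Jacobian weight yields precisely the tensor $\widetilde\sigma$ defined in \eqref{eq:pushforward}. The mass term gives $q\,u\,\overline\varphi\,d\mathbf{x}=\widetilde q\,\widetilde u\,\overline{\widetilde\varphi}\,d\widetilde{\mathbf{x}}$ with $\widetilde q$ as stated. Since this holds for arbitrary $\widetilde\varphi\in C_c^\infty(\widetilde\Omega)$, the equivalence of the two weak formulations follows, and hence the ``if and only if'' part of the lemma.

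For the far-field identity \eqref{eq:transequal}, I would extend $F$ to all of $\mathbb{R}^3$ by the identity outside $\overline\Omega$. The assumption $F|_{\partial\Omega}=\mathrm{Id}$ ensures this extension remains globally bi-Lipschitz, and outside $\Omega$ one has $DF=\mathbf{I}_{3\times 3}$, $|\det DF|=1$, so the push-forward of the trivial background $(\mathbf{I}_{3\times 3},1)$ is itself. Applying the first part of the lemma to the extended map, the pull-back $\widetilde u=u\circ F^{-1}$ solves the scattering problem for $F_*\{\Omega;\sigma,q\}$, and $\widetilde u=u$ on $\mathbb{R}^3\setminus\overline\Omega$, so the incident plane wave is preserved and the scattered part has the same asymptotic behavior in \eqref{eq:scattering1}, giving equal far-field patterns.

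The main obstacle is the rigorous use of the chain rule and the change of variables for Sobolev functions under a merely bi-Lipschitz (not $C^1$) map; this is what forces the push-forward formulas to be interpreted almost everywhere and requires the $L^\infty$ bounds on $DF$ and $DF^{-1}$ to keep the resulting tensor $\widetilde\sigma$ uniformly elliptic. Once that functional-analytic framework is in place, the algebraic manipulation identifying $(DF\,\sigma\,DF^T)|\det DF|^{-1}$ as the transformed conductivity is routine, and the remainder of the argument is essentially a substitution.
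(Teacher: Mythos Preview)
Your argument is correct and is the standard proof of this transformation lemma. Note that the paper does not actually prove Lemma~\ref{lem:transoptics}; it merely states it with a citation to \cite{GKLU3,LiLiuRonUhl}, so there is no ``paper's own proof'' to compare against, but what you have written is precisely the argument one finds in those references: pass to the weak formulation, apply the chain rule and change of variables for $H^1$ functions under a bi-Lipschitz map to identify the push-forward coefficients, and then extend $F$ by the identity outside $\overline{\Omega}$ to obtain the far-field invariance.
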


As a general description, in our subsequent study, $\delta\in\mathbb{R}_+$ always denotes the small regularization parameter. Let $\Omega_e, \Omega_i$ be the bounded domains in Definitions~\ref{def:cloak} and \ref{def:cloak2}. We shall also need $D_{\delta/2}\Subset D_\delta\Subset\Omega_i$ and $\Omega_i\Subset\Omega_l\Subset\Omega_e$, which shall be explicitly specified in our subsequent study. $D_{\delta/2}$ and $D_\delta$ are the virtual domains, and $D_\delta$ actually plays the role of $\Gamma_\delta$ in our earlier discussion.  Throughout the rest of the paper, we assume that there exists an orientation-preserving and bi-Lipschitz mapping
$F_\delta:\overline{\Omega_e}\rightarrow \overline{\Omega_e}$ such that
\beq\label{eq:blow1}
F_\delta(D_{\delta/2})= \Omega_i,\ F_\delta(D_\delta\backslash D_{\delta/2})=\Omega_l\backslash\Omega_i,\ F_\delta(\overline{\Omega_e}\setminus D_\delta)=\overline{\Omega_e}\setminus \Omega_l \ \mbox{and} \ F_\delta|_{\p \Omega}=\mbox{Id}.
\eeq
We refer to \cite{LiLiuRonUhl} for the discussion on the so-called assembled-by-components strategy of explicitly constructing such a transformation for generic $D_\delta$ and $\Omega_e$.
In our study, an approximate cloaking structure $\{\Omega_e; \Gs, q\}$ in the physical space is obtained by pushing forward
a virtual structure $\{\Omega_e; \Gs_\delta, q_\delta\}$,
\beq\label{eq:struc1}
\{\Omega_e; \Gs_\delta, q_\delta\}=(F_\delta^{-1})_*(\Omega_e; \sigma, q)=
\left\{
\begin{array}{ll}
I, 1 & \mbox{in} \quad \Omega_e\setminus\overline{D_\delta}, \\
\Gs_l, q_l & \mbox{in} \quad D_\delta\setminus \overline{D_{\delta/2}}, \\
\Gs_a, q_a & \mbox{in} \quad D_{\delta/2},
\end{array}
\right.
\eeq
where $\Gs_a$, $q_a$ are arbitrary but regular. Set
$$
\{\Omega_e; \Gs, q\}=(F_\delta)_*\{\Omega_e; \Gs_\delta, q_\delta\}\quad\mbox{and}\quad \{D_{\delta/2}; \Gs_a, q_a\}= (F_\delta^{-1})_*\{\Omega_i; \Gs_a', q_a'\},
$$
which denote, respectively, the physical cloaking structure and the physical object being cloaked. Clearly, $\{\Omega_i; \Gs_a', q_a'\}$ is also arbitrary but regular. The design of
$\Gs_l$ and $q_l$ in the lossy layer $D_\delta\setminus \overline{D}_{\delta/2}$ is critical and will be explicitly given in what follows at the appropriate place.
By the transformation acoustics in Lemma \ref{lem:transoptics}, it is straightforward to show that in order to quantify a cloaking device $\{\Omega_e; \sigma, q\}$ in the physical space, it suffices to derive the approximate cloaking effect to the virtual structure $\{\Omega_e; \Gs^\delta, q^\delta\}$. In other words, it is sufficient to
show that the {scattering amplitude} $u^\delta_\infty(\hat{\Bx},\mathbf{d})$ to the following system in the virtual space
\beq\label{eq:sys1}
\left\{
\begin{array}{l}
\Delta u_\delta +\omega^2 u_\delta =0 \hspace*{2cm} \mbox{in} \quad\RR^3\setminus \overline{D_\delta}, \\
\nabla\cdot \Gs^\delta\nabla u_\delta + \omega^2 q^\delta =0 \hspace*{.95cm} \mbox{in} \quad D_\delta, \\
u^+:=u_\delta- u^i  \quad \mbox{satisfies the radiation condition } \eqnref{eq:rad},
\end{array}
\right.
\eeq
verifies \eqnref{eq:definvisibility2}. Hence, throughout the rest of our study, we shall focus on quantifying the scattering of the virtual system \eqref{eq:sys1}.

\section{Boundary layer potentials}\label{sect:3}

In this section, we introduce the boundary layer potential operators for our subsequent use, and we refer to \cite{Ammari5,CK,Mcl,nedelec,Tay} for more relevant discussions.
Let $G_\omega$ be the fundamental solution to $\Delta+\omega^2$ given by
\beq\label{eq:fund1}
G_\omega(\Bx)=-\frac{e^{i\omega|\Bx|}}{4\pi|\Bx|}.
\eeq
For $\omega=0$ the corresponding fundamental solution is denoted by $G(\Bx)$. Let $D$ be a Lipshitz domain.
Denote by $\Scal_D^\omega$ the single layer potential operator
\beq\label{eq:singl1}
\Scal_D^\omega[\phi](\Bx):=\int_{\p D} G_\omega (\Bx-\By) \phi(\By)\, d\Gs_y\quad \Bx\in \RR^3\setminus \p D.
\eeq
It is well-known that the single layer potential $\Scal_D^\omega$
satisfies the trace formula
\beq \label{eq:trace}
\frac{\p}{\p\nu}\Scal_D^\omega[\phi] \Big|_{\pm} = \left(\pm \frac{1}{2}I+
(\Kcal_{B}^\omega)^*\right)[\phi] \quad \mbox{on } \p D.
\eeq
Here and throughout the rest of the paper, we make use of the following notation. For a function $u$ defined on $\mathbb{R}^3\backslash\partial D$, we denote
\[
u|_{\pm}(\Bx)=\lim_{\tau\rightarrow +0} u(\Bx\pm \tau\cdot\nu(\Bx)),\quad \Bx\in\partial D,
\]
and
\[
\frac{\partial u}{\partial\nu}\bigg|_\pm(\Bx)=\lim_{\tau\rightarrow +0}\langle \nabla_\Bx u(\Bx\pm \tau\cdot\nu(\Bx)),\nu(\Bx)\rangle,\quad \Bx\in\partial D,
\]
if the limits exist, where $\mathbf\nu(\Bx)=\mathbf\nu_x$ is the unit outward normal vector to $\p D$ at $\Bx\in\partial D$. In \eqref{eq:trace}, $(\Kcal_{D}^\omega)^*$ is the $L^2$-adjoint of $\Kcal_D^\omega$ and
$$
\Kcal_D^\omega [\phi]:= \mbox{p.v.} \quad \int_{\p D} \frac{\p G_\omega(\Bx-\By)}{\p \nu(\By)} \phi(\By)\, d\Gs_y , \quad \Bx\in \p D
$$
where p.v. signifies the Cauchy principle value.
We also define the double layer potential operator by $\Dcal_D^\omega$,
\beq\label{eq:dbl1}
\Dcal_D^\omega[\phi](\Bx):=\int_{\p D} \frac{\p G_\omega (\Bx-\By)}{\p \mathbf\nu_y} \phi(\By)\, d\Gs_y \quad \Bx\in \RR^3\setminus \p D
\eeq
and we have the following trace formula
\beq \label{eq:trace2}
\Dcal_D^\omega[\phi] \Big|_{\pm} = \left(\mp \frac{1}{2}I+
\Kcal_{B}^\omega\right)[\phi] \quad \mbox{on } \p D.
\eeq
%It is known that the single layer potential $\Scal_{D_\delta}$ maps $H^s(\p D)$ to $H^{s+1}(\p D)$ with $s\geq -3/2$,
%if $\p D$ is $C^3$.

By using the layer potential operators introduced above, we make use of the following integral representation to the solution of \eqnref{eq:sys1}
outside of $D_\delta$
\beq\label{eq:repre1}
u_\delta=u^i+ \Scal_{D_\delta}^\omega[\phi] \quad \mbox{in} \quad \RR^3\setminus \overline{D_\delta},
\eeq
where it is easily verified that the density function $\phi$ satisfies
\beq\label{eq:trans1}
\left(\frac{I}{2} + (\Kcal_{D_\delta}^\omega)^*\right)[\phi]= \frac{\p (u_\delta-u^i)}{\p \mathbf\mathbf\nu}\Big|_{\p D_\delta}^+.
\eeq

\section{Regularized full-cloak}\label{sect:4}

In this section, we consider the regularized cloaking construction by taking the generating set to be a curve. Let $\GG_0$ denote a smooth simple and non-closed curve in $\RR^3$ with two endpoints, denoted by $P_0$ and $Q_0$, respectively. Let $r\in\mathbb{R}_+$. Next, by using $\Gamma_0$ as a generating set, we shall construct a simply connected set $D_r$ in $\mathbb{R}^3$ for our cloaking study; see Fig.~\ref{fig1} for a schematic illustration. Denote by $N(\Bx)$ the normal plane of the curve $\GG_0$ at $\Bx\in\GG_0$. We note that $N(P_0)$ and $N(Q_0)$ are, respectively, defined by the left and right limits along $\Gamma_0$. For any $\Bx\in\GG_0$, we let $\mathscr{S}_r(\Bx)$ denote the disk lying on $N(\Bx)$, centered at $\Bx$ and of radius $r$. It is assumed that there exists $r_0\in\mathbb{R}_+$ such that when $r\leq r_0$, $\mathscr{S}_r(\Bx)$ intersects $\GG_0$ only at $\Bx$. We start with a thin structure $D_r^f$ given by
\begin{equation}\label{eq:drf}
D_r^f:=\mathscr{S}_r(\Bx)\times\Gamma_0(\Bx),\ \Bx\in\overline{\Gamma}_0,
\end{equation}
where we identify $\Gamma_0$ with its parametric representation $\Gamma_0(\Bx)$. Clearly, the facade of $D_r^f$, denoted by $S_r^f$ and parallel to $\GG_0$, is given by
\beq\label{eq:facade}
S_r^f:=\{\Bx+r\cdot \mathbf{n}(\Bx); \Bx\in \GG_0, \mathbf{n}(\Bx)\in N(\Bx) \cap \mathbb{S}^2\},
\eeq
and the two end-surfaces of $D_r^f$ are the two disks $\mathscr{S}_r(P_0)$ and $\mathscr{S}_r(Q_0)$. Let $D_{r_0}^a$ and $D_{r_0}^b$ be two simply connected sets with $\partial D_{r_0}^a=S_{r_0}^a\cup \mathscr{S}_{r_0}(P_0)$ and $\partial D_{r_0}^b=S_{r_0}^b\cup\mathscr{S}_{r_0}(Q_0)$. It is assumed that $S_{r_0}:=S^f_{r_0}\cup S^b_{r_0} \cup S^a_{r_0}$ is a $C^3$-smooth boundary of the domain $D_{r_0}:=D_{r_0}^a\cup D_{r_0}^f\cup D_{r_0}^b$. For $0<r<r_0$, we set
\[
D_r^a:=\frac{r}{r_0}(D_{r_0}^a-P_0)+P_0=\left\{\frac{r}{r_0}\cdot(\Bx-P_0)+P_0; \Bx\in D_{r_0}^a\right\},
\]
and similarly, $D_r^b:={r}/{r_0}\cdot(D_{r_0}^b-Q_0)+Q_0$. Let $S_r^a$ and $S_r^b$, respectively, denote the boundaries of $D_r^a$ and $D_r^b$ excluding $\mathscr{P}_r^a$ and $\mathscr{S}_r^b$. Now, we set $D_r:=D_r^a\cup D_r^f\cup D_r^b$, and $S_r:=S^f_r\cup S^b_r \cup S^a_r=\partial D_r$.

According to our earlier description, it is obvious that for $0<r\leq r_0$, $D_r$ is a simply connected set with $C^3$-smooth boundary $S_r$,  and $D_{r_1}\Subset D_{r_2}$ if $0\leq r_1<r_2\leq r_0$. Moreover,  $D_r$ degenerates to $\Gamma_0$ if one takes $r=0$. In order to ease the exposition, we assume that $r_0\equiv 1$. In what follows, we let $\delta\in\mathbb{R}_+$ be the asymptotically small regularization parameter and let
\beq\label{eq:rgn1}
S_\delta:=S^f_\delta \cup S^b_\delta \cup S^a_\delta,
\eeq
denote the boundary surface of the virtual domain $D_\delta$ used for the blowup construction. In order to ease the exposition, we drop the dependence on $r$ if one takes $r=1$. For example, $D$ and $S$ denote, respectively, $D_r$ and $S_r$ with $r=1$. It is emphasized that in all our subsequent argument, $D$ can always be replaced by $D_{\tau_0}$ with $0<\tau_0\leq r_0$ being a fixed number. Hence, we indeed shall not lose any generality of our study by assuming that $r_0\equiv 1$.
Finally, we would like to note that
\begin{figure}%[h]
\begin{center}
  \includegraphics[width=4.5in,height=2.0in]{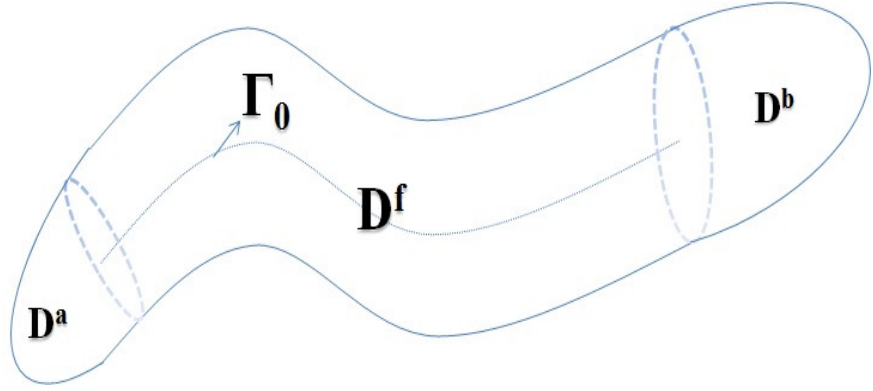}
  \end{center}
  \caption{Schematic illustration of the domain $D_r$ for the regularized full-cloak. \label{fig1}}
\end{figure}
a particular case is to take $\Gamma_0$ to be a straight line-segment and,
$S^{b}_r$ and $S^{a}_r$ to be two semi-spheres of radius $r$ and centered at $P_0$ and $Q_0$ respectively, which is exactly the one considered in \cite{LiLiuRonUhl}. Hence, the geometry in our study is much more general than that considered in \cite{LiLiuRonUhl}; see also Remark~\ref{rem:general} in what follows for more relevant discussions.

We are now in a position to present the main theorem on the approximate cloak constructed by using $D_\delta$ described above as the virtual domain.
\begin{thm}\label{th:main1}
Let $D_\delta$ be as described above with $\p D_\delta=S_\delta$ defined in \eqnref{eq:rgn1}.
Let $u_\delta$ be the solution to \eqnref{eq:sys1} with $\{\Omega_e; \Gs_\delta, q_\delta\}$
defined in \eqnref{eq:struc1}, and $\{D_\delta\backslash\overline{D_{\delta/2}}; \sigma_l, q_l\}$ given in \eqnref{eq:loss1}. Let $u_\infty^\delta(\hat\Bx,\mathbf{d})$ denote the scattering amplitude of $u_\delta$. Then there exists $\delta_0\in\mathbb{R}_+$ such that when $\delta<\delta_0$,
\beq\label{eq:ncest1}
|u^\delta_\infty(\hat\Bx,\mathbf{d})|\leq C(\omega) \delta^2,
\eeq
where $C(\omega)$ is a positive constant depending on $\omega$ and $D$, but independent of $\Gs_a$, $q_a$ and $\hat\Bx$, $\mathbf{d}$.
\end{thm}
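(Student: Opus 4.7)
The plan is to quantify the exterior integral representation \eqref{eq:repre1} at infinity. First, from the standard far-field asymptotics of the single layer potential,
\[
u_\infty^\delta(\hat{\mathbf{x}},\mathbf{d})
\;=\;-\frac{1}{4\pi}\int_{\p D_\delta} e^{-i\omega\hat{\mathbf{x}}\cdot\By}\,\phi(\By)\,d\sigma_y,
\]
so $|u_\infty^\delta(\hat{\mathbf{x}},\mathbf{d})|\le C\|\phi\|_{L^1(\p D_\delta)}$. The density $\phi$ is characterized by \eqref{eq:trans1} through the exterior Neumann trace of $u^+:=u_\delta-u^i$ on $\p D_\delta$, and inverting $\tfrac{1}{2}I+(\Kcal_{D_\delta}^\omega)^*$ on $L^2(\p D_\delta)$ (uniformly in $\delta$ after rescaling the tube to a reference geometry) reduces the target to quantitative bounds on the trace of $u_\delta$ and its normal derivative on $\p D_\delta$.

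The engine is a variational analysis of \eqref{eq:sys1} on $D_\delta$, in which the compatible lossy layer in $D_\delta\setminus\overline{D_{\delta/2}}$ is decisive. The design of $\sigma_l,q_l$ in \eqref{eq:loss1} --- whose explicit form will be tailored to enforce dissipation across the annular shell while respecting \eqref{eq:regular} --- ensures that testing the Helmholtz equation against $u_\delta$ on $D_\delta$ and taking imaginary parts yields the identity
\[
\omega^2\!\!\int_{D_\delta\setminus\overline{D_{\delta/2}}}\!\!\Im q_l\,|u_\delta|^2\,d\Bx
\;=\;\Im\!\!\int_{\p D_\delta}\overline{u_\delta}\,\tfrac{\p u_\delta}{\p\nu}\Big|_+\,d\sigma_y,
\]
together with an $H^1(D_\delta)$ a priori bound for $u_\delta$ that is independent of the interior content $\sigma_a,q_a$. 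Composed with the exterior Dirichlet-to-Neumann map for the Helmholtz equation with the Sommerfeld condition, this closes the loop and expresses both $u_\delta|_{\p D_\delta}$ and $\p u_\delta/\p\nu|_{\p D_\delta}$ in terms of the incident data $u^i$ alone, uniformly in $\sigma_a,q_a$.

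The sharp rate is then extracted by a thin-tube rescaling. Using the normal-plane field along $\Gamma_0$, the map that sends $\mathscr{S}_\delta(\Bx)$ to $\mathscr{S}_1(\Bx)$ carries $D_\delta$ to a reference tube $D$ of cross-sectional radius one, producing Jacobian factors of $\delta^2$ on the volume and $\delta$ on surface measures, while transverse derivatives rescale by $\delta^{-1}$ and tangential derivatives are essentially unchanged. The lossy-layer parameters in \eqref{eq:loss1} are precisely tuned so that the interior problem rescales to a $\delta$-uniform problem on $D$; unwinding the scalings in $\|\phi\|_{L^1(\p D_\delta)}$ against the surface-area factor on $\p D_\delta$ then yields the asserted $|u_\infty^\delta|=O(\delta^2)$, uniformly in $\sigma_a,q_a$ and in $(\hat{\mathbf{x}},\mathbf{d})$.

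The principal obstacle is the geometric one. Along a generic curve $\Gamma_0$ the normal frame twists, so the rescaling to $D$ introduces curvature-dependent metric corrections, and the end-caps $D_\delta^a,D_\delta^b$ are joined to the lateral facade $S_\delta^f$ across codimension-two edges. Preserving $C^3$-smoothness across these junctions and obtaining $\delta$-uniform bounds for $\tfrac{1}{2}I+(\Kcal_{D_\delta}^\omega)^*$ as well as for the trace operators $H^1(D_\delta)\to L^2(\p D_\delta)$ --- with constants independent of $\sigma_a,q_a$ --- is the technical heart of the argument, and is exactly what enables the convexity hypothesis on $\Gamma_0$ required in \cite{LiLiuRonUhl} to be dropped.
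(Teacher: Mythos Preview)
Your outline captures several correct ingredients --- the far-field formula for the single layer, the use of the imaginary part of the variational identity to exploit the lossy layer, and the rescaling of the tube $D_\delta$ to a reference domain $D$ --- and these are all present in the paper's argument. But the step where you extract the rate is wrong, and this is not a technicality: the naive scaling you describe delivers only $O(\delta)$, not $O(\delta^2)$.

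Concretely, the lateral surface $S_\delta^f$ has area of order $\delta$ (circumference $\sim\delta$ times length $\sim 1$), and the density $\phi$ on it is $O(1)$: after rescaling, the leading term in $\tilde\phi$ on $S^f$ is $-2\,\nu_x\cdot\nabla_l u^i(\Bz_{\tilde x})$ (see \eqref{eq:expanphi411}), which is of unit size. Hence $\|\phi\|_{L^1(\p D_\delta)}=O(\delta)$, and your bound $|u_\infty^\delta|\le C\|\phi\|_{L^1(\p D_\delta)}$ yields only $|u_\infty^\delta|=O(\delta)$. The extra factor of $\delta$ in the paper comes from a cancellation you have not identified: the far-field kernel, to leading order, depends on the projection $\Bz_{\tilde y}\in\Gamma_0$ rather than on the angular variable $\theta$ around the cross-section, so the relevant quantity is the angular mean $\int_0^{2\pi}\tilde\phi\,d\theta$. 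The dipole term $\nu\cdot\nabla_l u^i(\Bz)$ has vanishing angular mean (this is \eqref{eq:ef1}), which kills the $O(\delta)$ contribution from $R_1$ in \eqref{eq:I1I2} and pushes the far field down to $O(\delta^2)$. Without this observation your argument stalls at the wrong rate.

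There is a second, related gap. You claim the lossy layer produces an $H^1(D_\delta)$ a priori bound independent of $\sigma_a,q_a$; this is both stronger than what is available and not how the argument closes. The lossy layer controls only $\|u_\delta\|_{L^2(D_\delta\setminus D_{\delta/2})}$, and even that bound is not uniform a priori: it depends on $|u_\infty^\delta|$ and on $\|\tilde\Phi\|_{H^{-3/2}}$ (Lemma~\ref{le:48}). The paper closes a genuine bootstrap: Lemma~\ref{le:estiphi41} bounds $\|\tilde\Phi\|_{H^{-3/2}(S^f)}$ and $\|\tilde\Phi\|_{H^{-3/2}(S^a\cup S^b)}$ by $\|u_\delta\|_{L^2(D_\delta\setminus D_{\delta/2})}$ (with a crucial extra $\delta^{-1/2}$ on the caps), Lemma~\ref{le:48} bounds $\|u_\delta\|_{L^2}^2$ back by $|u_\infty^\delta|^2$ plus $\delta$-weighted $\|\tilde\Phi\|$ terms, and \eqref{eq:mainest5} bounds $|u_\infty^\delta|$ by $\delta\|\tilde\Phi\|_{H^{-3/2}(S^f)}+\delta^2\|\tilde\Phi\|_{H^{-3/2}(\p D)}$. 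Iterating these three inequalities (and using that the caps, though worse by $\delta^{-1/2}$, carry an extra $\delta$ from their area) yields $\|\tilde\Phi\|_{H^{-3/2}(S^f)}\le C\delta$ and $\|\tilde\Phi\|_{H^{-3/2}(\p D)}\le C$, which is what finally gives \eqref{eq:ncest1}. Your proposal does not contain this loop, and the ``uniform $H^1$ bound'' you invoke would not by itself distinguish the facade from the caps, which is essential.
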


\begin{rem}\label{lem:physical1}
Following our discussion at the end of Section~2, one can immediately infer by Theorem~\ref{th:main1} that the push-forwarded structure
$$\{\Omega_e; \Gs, q\}=(F_\delta)_*\{\Omega; \Gs_\delta, q_\delta\}$$
with $F_\delta$ defined by \eqnref{eq:blow1} and $\{\Omega_e; \Gs_\delta, q_\delta\}$ defined in the theorem, produces an approximate cloaking device within $\delta^2$-accuracy to the ideal cloak. Indeed, if one lets $u_\infty$ denote the scattering amplitude corresponding to the cloaking structure, then there holds
\beq\label{eq:ncest1}
|u_\infty(\hat\Bx,\mathbf{d})|\leq C(\omega) \delta^2
\eeq
where $C(\omega)$ is independent of $\Gs_a'$, $q_a'$ and $\hat\Bx$, $\mathbf{d}$.
\end{rem}

\begin{rem}\label{lem:sharpness1}
It is remarked that our estimate derived in Theorem~\ref{th:main1} is sharp and this can be verified by the numerical examples presented in \cite{LiLiuRonUhl}.
\end{rem}

The subsequent three subsections are devoted to the proof of Theorem~\ref{th:main1}. For our later use, we first derive some critical lemmas. In what follows, we let $\Bz$ denote the space variable on $\Gamma_0$ and for every $\By\in D_r^f$ we define a new variable $\Bz_y\in \GG_0$ which is the projection of $\By$ onto $\GG_0$.
Meanwhile, if $\By$ belongs to $D_r^a$ (respectively $D_r^b$) then $\Bz_y$ is defined to be $P_0$ (respectively $Q_0$).
Let $t$ denote the arc-length parameter of $\GG_0$ and $\theta$,
which ranges from $0$ to $2\pi$, be the angle of the point on $N(t)$ with respect to the centeral point $\Bx(t)\in \Gamma_0$. Moreover, we assume that if $\theta=0$, then the corresponding points are those lying on the line that connects $\GG_0(t)$ to $\GG_1(t)$, where
$\GG_1$ is defined to be
$$
\GG_1:=\{\Bx + \mathbf{n}_1(\Bx); \Bx \in \GG_0, \mathbf{n}_1(\Bx)\in N(\Bx)\cap \mathbb{S}^2-\mbox{a fixed vector for a given $\Bx$}\}.
$$

Next we introduce a blowup transformation which maps $\By\in\overline{D_\delta}$ to $\tilde\By\in \overline{D}$ as follows
\begin{equation}\label{eq:A}
A(\By)=\tilde\By:=\frac{1}{\delta}(\By-\Bz_y)+\Bz_y,\quad \By\in D_\delta^f,
\end{equation}
whereas
\beq\label{eq:Ares1}
A(\By)=\tilde\By:= \left\{
\begin{array}{ll}
\frac{\By-P_0}{\delta}+P_0, & \By \in D_\delta^a, \\
\frac{\By-Q_0}{\delta}+Q_0, & \By \in D_\delta^b.
\end{array}
\right.
\eeq
Then, we can show that

\begin{lem}\label{le:2}
Let $A$ be the transformation introduced in \eqref{eq:A} and \eqref{eq:Ares1} which maps the region $\overline{D_\delta}$ to $\overline{D}$.
Let $B(\By)$ be the corresponding Jacobi matrix of $A(\By)$ given by
\begin{equation}\label{eq:jacob1}
B(\By)=\nabla_\By A(\By),\quad \By\in\overline{D_\delta}.
\end{equation}
Then we have $B(\By)^T=B(\By)$ and
\beq\label{eq:le22}
 B(\By)\mathbf\nu_y= \frac{1}{\delta}\mathbf\nu_y,\quad \By\in\partial D_\delta,\\
\eeq
where $\nu_y$ stands for the unit outward normal vector to $\p D_\delta$.
\end{lem}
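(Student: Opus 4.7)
The plan is to split $\overline{D_\delta}$ into its three pieces $D_\delta^f$, $D_\delta^a$, $D_\delta^b$ and analyze $B=\nabla_\By A$ on each. On the two end caps $D_\delta^{a}$ and $D_\delta^{b}$ the map $A$ defined by \eqref{eq:Ares1} is an affine dilation (from $P_0$, resp.\ $Q_0$) with factor $1/\delta$, so $B\equiv (1/\delta)\,I$ throughout, which is automatically symmetric and trivially satisfies $B\nu_y=(1/\delta)\nu_y$ on $S_\delta^{a}\cup S_\delta^{b}$.

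On the facade $D_\delta^f$ the situation is more delicate because $\Bz_y$ depends on $\By$ through the projection $\Bz_y=\Bz(t(\By))$. Writing \eqref{eq:A} as $A(\By)=\Bz_y+\tfrac{1}{\delta}(\By-\Bz_y)$ and applying the chain rule gives
\[
B=\tfrac{1}{\delta}I+\Bigl(1-\tfrac{1}{\delta}\Bigr)\nabla_\By\Bz_y,
\]
so it suffices to analyze $M:=\nabla_\By\Bz_y$. Since $\Bz_y$ factors through the scalar $t(\By)$, one has $M=\Bz'(t)\otimes \nabla_\By t$. To identify $\nabla_\By t$ I would differentiate the defining orthogonality identity $\Bz'(t(\By))\cdot(\By-\Bz(t(\By)))=0$ in $\By$; solving the resulting scalar equation yields $\nabla_\By t=\Bz'(t)/\kappa(\By)$ with $\kappa(\By)=1-\Bz''(t)\cdot(\By-\Bz(t))$, which is strictly positive for $\delta$ small. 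Hence $M=\tfrac{1}{\kappa}\,\Bz'(t)\otimes\Bz'(t)$ is a symmetric rank-one matrix, and $B=B^T$ follows throughout $D_\delta^f$.

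For the boundary identity \eqref{eq:le22} on $S_\delta^f$ I would first verify that the outward unit normal $\nu_y$ at $\By=\Bz(t)+\delta\,\mathbf{n}(t,\theta)$ coincides with the in-plane unit vector $\mathbf{n}(t,\theta)\in N(t)\cap \mathbb{S}^2$. This comes from a cross-product calculation with the local parametrization $\By(t,\theta)$ of $S_\delta^f$: the two tangent vectors $\p_t\By=\Bz'+\delta\,\p_t\mathbf{n}$ and $\p_\theta\By=\delta\,\p_\theta\mathbf{n}$, combined with the identities $|\mathbf{n}|=|\p_\theta\mathbf{n}|=1$, $\mathbf{n}\cdot\p_\theta\mathbf{n}=0$ and $\p_t\mathbf{n}\perp\mathbf{n}$, yield a vector parallel to $\Bz'\times\p_\theta\mathbf{n}$, which lies in $N(t)$ perpendicular to $\p_\theta\mathbf{n}$ and must therefore be a scalar multiple of $\mathbf{n}$. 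Given $\nu_y=\mathbf{n}$, the orthogonality $\mathbf{n}\perp\Bz'(t)$ forces $M\mathbf{n}=0$, and hence $B\nu_y=(1/\delta)\,\nu_y$.

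The main technical care needed is simply keeping track of the geometry when $\Gamma_0$ is curved: neither the projection $t(\By)$ nor the moving frame $\{\Bz'(t),\mathbf{n},\p_\theta\mathbf{n}\}$ is aligned with Cartesian axes, so it is the orthogonality relations among these vectors (rather than any explicit coordinate expression for $B$) that drive both the symmetry of $M$ and the annihilation $M\nu_y=0$. Once these relations are set up cleanly, the two claims in the lemma follow in a couple of lines on each of the three subdomains.
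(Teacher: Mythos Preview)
Your approach is the same as the paper's: split into the two caps and the facade, note $B=(1/\delta)I$ on the caps, and on the facade compute $\nabla_\By\Bz_y$ as a symmetric rank-one matrix along $\Bz'(t)$, which annihilates the normal $\nu_y\perp\Bz'(t)$. Your derivation is in fact more careful than the paper's --- the paper writes $\nabla_\By\Bz_y=\Bz'(t)\Bz'(t)^T$ without the curvature factor $1/\kappa(\By)$ you correctly identify, and it takes $\nu_y=\mathbf{n}$ for granted --- but neither shortcut affects the lemma's conclusions, since any scalar multiple of $\Bz'(t)\Bz'(t)^T$ is symmetric and kills $\mathbf{n}$.
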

\begin{proof}
It is easily seen that if $\By\in D_\delta^a\cup D_\delta^b$, then
$$
B(\By)=\frac{1}{\delta}\, \mathbf{I}_{3\times 3} , \quad \By\in D_\delta^a \cup D_\delta^b.
$$
Next, we let $\By\in D_\delta^f$ and note that $\By-\Bz_y=\delta(\tdy-\Bz_{\tilde{y}})$, namely
$$
A(\By)=\tdy=\frac{1}{\delta}\By-(\frac{1}{\delta}-1)\Bz_{y}.
$$
Hence it follows
\beq\label{tmp2}
B(\By)=\nabla_y A(\By)=\frac{1}{\delta}\mathbf{I}_{3\times 3}-(\frac{1}{\delta}-1)\Bz_y'(t)\Bz_y'(t)^T,
\eeq
where the superscript $T$ denotes the transpose of a vector or matrix.
Then it is straightforward to see that
$$
B(\By)\mathbf\nu_y=\frac{1}{\delta}\mathbf\nu_{y},\quad \By\in\partial D_\delta.
$$

The proof is complete.
\end{proof}

Next, we present the crucial design of the lossy layer.
Define the material parameters $\Gs_\delta$ and $q_\delta$ in the lossy layer $D_\delta\setminus\overline{D}_{\delta/2}$ to be
\beq\label{eq:loss1}
\Gs_\delta(\Bx)=\Gs_l(\Bx):= \gamma\cdot B^{-2}(\Bx), \quad q_\delta(\Bx)=q_l(\Bx): = \alpha + i \beta \quad \mbox{for} \quad \Bx\in D_\delta\setminus\overline{D}_{\delta/2},
\eeq
where $\gamma$, $\alpha$ and $\beta$ are all positive real numbers. It is easily seen from \eqnref{tmp2} that $B$ is a symmetric positive definite matrix and hence $\Gs_\delta^{-1}$ is a well-defined regular material tensor.

\subsection{Asymptotic expansions}
In order to tackle the integral equation \eqref{eq:trans1}, we shall first derive some crucial asymptotic expansions. Henceforth, we denote $\tilde{\phi}(\tdy):=\phi(\By)$ for $\tdy=A(\By)$, $\By\in D_\delta$
and $\tdy\in \p D$. The same notation shall be adopted for $\Bx$ and $\tdx$. First, we note that
\beq
d\Gs_y=\left\{
\begin{array}{ll}
\delta\, d\Gs_{\tilde{y}}, & \By\in S_\delta^f,\\
\delta^2\, d\Gs_{\tilde{y}}, & \By\in S_\delta^b\cup S_\delta^a.
\end{array}
\right.
\eeq
For a function $u$, its gradient in $D_\delta^f$ can be decomposed into two parts: the tangential derivative
with respect to $\GG_0$ and the corresponding normal derivative as follows
$$
\nabla_y u(\By)= \p_t u(\By) T_y+ \nabla_l u(\By),
$$
where $T_y$ is the tangential direction along $\Gamma_0$. Here, $\nabla_l$ signifies the normal
derivative.
Then for $\Bx\in \RR^3\setminus D_\delta$ with sufficient large $|\Bx|$, we can expand $\Scal_{D_\delta}^{\omega}[\phi]$ in
 $\Bz\in\Gamma_0$ as follows
\begin{align}
\Scal_{D_\delta}^{\omega}[\phi](\Bx)=&\int_{\p D_\delta}G_\omega(\Bx-\By)\phi(\By)\, d\Gs_y \nonumber\\
=& \int_{\p D_\delta}\Big(G_\omega(\Bx-\Bz_y)-\nabla G_\omega(\Bx-\Bz_y)\cdot(\By-\Bz_y)\Big)\phi(\By)\, d\Gs_y \nonumber\\
 & +\int_{\p D_\delta}(\By-\Bz_y)^T\nabla^2G_\omega(\Bx-\mathbf\zeta(\By))(\By-\Bz_y)\phi(\By)d\Gs_y \nonumber\\
=& \int_{\p D_\delta}G_\omega(\Bx-\Bz_y)\phi(\By)\, d\Gs_y -\int_{\p D_\delta}\nabla G_\omega(\Bx-\Bz_y)\cdot(\By-\Bz_y)\phi(\By)\, d\Gs_y
\nonumber\\
&+ \int_{\p D_\delta}(\By-\Bz_y)^T\nabla^2G_\omega(\Bx-\mathbf\zeta(\By))(\By-\Bz_y)\phi(\By)d\Gs_{y}\nonumber\\
:=& R_1 +R_2 +R_3 \label{eq:I1I2},
\end{align}
where $\zeta(\By)=\eta \By+(1-\eta)\Bz_y \in D_\delta$ for some $\eta\in (0,1)$, and the superscript $T$ signifies the matrix transpose.
The term $R_3$ in \eqref{eq:I1I2} is a remainder term from the Taylor series expansion and it verifies the following estimate
\begin{equation}\label{eq:R3}
 |R_3|=\delta^2 \Big|\int_{\p D_\delta}(\tdy-\Bz_{\tilde{y}})^T\nabla^2G_\omega(\Bx-\mathbf\zeta(\By))(\tdy-\Bz_{\tilde{y}})\phi(\By)d\Gs_{y}\Big|
\leq \delta^3 \frac{1}{|\Bx|} \|\tilde\phi\|_{H^{-3/2}(\p D)}.
\end{equation}
We shall also need the expansion of the incident plane wave $u^i$ in $\Bz\in\Gamma_0$, and there holds
\beq\label{eq:exp1}
u^i(\By)=u^i(\Bz_y)+\nabla u^i(\Bz_y)\cdot (\By-\Bz_y)+ \sum_{|\alpha|=2}^\infty \p_y^\alpha u^i(\Bz_y)(\By-\Bz_y)^\alpha,
\eeq
where the multi-index $\alpha=(\alpha_1, \alpha_2, \alpha_3)$ and $\p_y^{\alpha}=\p_{y_1}^{\alpha_1}\p_{y_2}^{\alpha_2}\p_{y_3}^{\alpha_3}$.
Since for $\By\in \p D_\delta$, $\mathbf\nu_y=\mathbf\nu_{\tilde{y}}$, one further has
$$
\mathbf\nu_y\cdot\nabla_y u^i(\By)= \mathbf\nu_y\cdot\nabla\sum_{|\alpha|=0}^{\infty} \p_y^\alpha u^i(\Bz_y)(\By-\Bz_y)^\alpha=
\sum_{|\alpha|=1}^{\infty}\delta^{|\alpha|-1} \p_y^\alpha u^i(\Bz_y)\mathbf\nu_{\tilde{y}}\cdot\nabla(\tdy-\Bz_y)^\alpha.
$$

The following lemma is of critical importance for our subsequent analysis.

\begin{lem}\label{le:1}
Let $\phi$ be the solution to \eqnref{eq:trans1} and $\tilde\phi(\tdx)=\phi(\Bx)$ for $\Bx\in\partial D_\delta$ and $\tilde\Bx\in\partial D$. There hold the following results
\beq\label{eq:arg1}
(\Kcal_{D_\delta}^\omega)^*[\phi](\Bx)=
\left\{
\begin{array}{ll}
\delta (\Kcal_{S^f}^\omega)^*[\tilde\phi](\tdx)+ \Ocal(\delta^2\|\tilde\phi\|_{H^{-3/2}(\p D)}), & \Bx \in S_\delta^f, \\
(\Kcal_{S^c})^*[\tilde\phi](\tdx)+ \Ocal(\delta\|\tilde\phi\|_{H^{-3/2}(\p D)})& \Bx \in S_\delta^c, \quad c\in\{a, b\},
\end{array}
\right.
\eeq
and
\beq\label{eq:arg2}
\Scal_{D_\delta}^\omega[\phi](\Bx)=\left\{
\begin{array}{ll}
\delta \Scal_{S^f}^\omega[\tilde\phi](\tdx)+ \Ocal(\delta^2\|\tilde\phi\|_{H^{-3/2}(\p D)}), &\Bx \in S_\delta^f, \\
\delta (\Scal_{S^c}+\Scal_{S^f}^\omega)[\tilde\phi](\tdx)+ \Ocal(\delta^2\|\tilde\phi\|_{H^{-3/2}(\p D)}), & \Bx \in S_\delta^c, \quad c\in\{a, b\},
\end{array}
\right.
\eeq
where
\beq\label{eq:Kcal411}
(\Kcal_{S^f}^\omega)^*[\tilde\phi](\tdx):=\frac{1}{4\pi}\int_{\GG_0}\Big(\frac{\la \Bz_{\tilde{x}}-\Bz_{\tilde{y}},\mathbf\nu_x\ra}{|\Bz_{\tilde{x}}-\Bz_{\tilde{y}}|^3}
-i\omega\frac{\la \Bz_{\tilde{x}}-\Bz_{\tilde{y}},\mathbf\nu_x\ra}{|\Bz_{\tilde{x}}-\Bz_{\tilde{y}}|^2}\Big)
e^{i\omega|\Bz_{\tilde{x}}-\Bz_{\tilde{y}}|}\int_0^{2\pi}\tilde\phi(\tdy)\, d\theta_{\tilde{y}}\, ds_t ,
\eeq
and
$$
(\Kcal_{S^c})^*[\tilde\phi](\tdx):=\frac{1}{4\pi}\int_{S^c} \frac{\la \tilde{\Bx}-\tilde{\By},\mathbf\nu_x\ra}{|\tilde{\Bx}-\tilde{\By}|^3}
\tilde\phi(\tdy)\, d\Gs_{\tilde{y}},
$$
and
$$
\Scal_{S^f}^\omega[\tilde\phi](\tdx):=-\frac{1}{4\pi}\int_{\GG_0}\frac{e^{i\omega|\Bz_{\tilde{x}}-\Bz_{\tilde{y}}|}}{|\Bz_{\tilde{x}}-\Bz_{\tilde{y}}|}
\int_0^{2\pi}\tilde\phi(\tdy)d\theta_{\tilde{y}}ds_t, \quad \Scal_{S^c}[\tilde\phi](\tdx):=\int_{S^c} G(\tdx-\tdy) \tilde\phi(\tdy)\, d\Gs_{\tilde{y}}.
$$
%$$
%\Kcal_{S^c}^*[\tilde\phi](\tdx):=\int_{S^c}\frac{\la \tilde{\Bx}-
%\tilde{\By},\mathbf\nu_x\ra}{|\tilde{\Bx}-\tilde{\By}|^3}\tilde\phi(\tdy)ds,\quad c\in\{a, b\}
%$$
The variables $\Bz_{\tilde{x}}, \Bz_{\tilde{y}}$ are in $\GG_0$.
\end{lem}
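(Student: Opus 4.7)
\smallskip
\noindent\emph{Plan.} The strategy is to decompose $\partial D_\delta=S_\delta^f\cup S_\delta^a\cup S_\delta^b$, perform the change of variable $\tilde\By=A(\By)$ on each piece, and then Taylor-expand the kernels of $G_\omega$ around the projection onto $\GG_0$ on the facade and around the centre $P_0$ or $Q_0$ on the caps. The two asymptotics are driven by two different scalings of the surface measure and of $\Bx-\By$.

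\smallskip
\noindent\emph{Step 1 (change of variable and measure scaling).} I first verify that parametrising $S_\delta^f$ by the arc-length $t$ of $\GG_0$ and the cross-sectional angle $\theta$ gives $d\Gs_y=\delta\,ds_t\,d\theta=\delta\,d\Gs_{\tilde y}$, whereas on the caps $S_\delta^c$ with $c\in\{a,b\}$ the dilation by $\delta$ about the endpoint yields $d\Gs_y=\delta^2 d\Gs_{\tilde y}$. I split
\[
(\Kcal_{D_\delta}^\omega)^*[\phi](\Bx)=\!\!\int_{S_\delta^f}\!\!+\!\int_{S_\delta^a}\!\!+\!\int_{S_\delta^b}\!\!\frac{\p G_\omega(\Bx-\By)}{\p\mathbf\nu_x}\phi(\By)\,d\Gs_y,
\]
and similarly for $\Scal_{D_\delta}^\omega$. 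The analysis of each of the six self/cross contributions follows the same two-step pattern.

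\smallskip
\noindent\emph{Step 2 (facade self-interaction).} For $\Bx\in S_\delta^f$ I write $\By=\Bz_y+\delta(\tilde\By-\Bz_{\tilde y})$ with $\Bz_y\in\GG_0$, and Taylor expand, exactly as in \eqnref{eq:I1I2},
\[
G_\omega(\Bx-\By)=G_\omega(\Bz_x-\Bz_y)-\nabla G_\omega(\Bz_x-\Bz_y)\cdot\bigl((\By-\Bz_y)-(\Bx-\Bz_x)\bigr)+R(\Bx,\By),
\]
with $R=O(\delta^2)$ uniformly. Differentiating in the $\Bx$-variable and multiplying by $\nu_x$, the leading term contributes
\[
\frac{\p G_\omega(\Bz_x-\Bz_y)}{\p\mathbf\nu_x}=\frac{1}{4\pi}\Bigl(\frac{\la \Bz_x-\Bz_y,\nu_x\ra}{|\Bz_x-\Bz_y|^3}-i\omega\frac{\la \Bz_x-\Bz_y,\nu_x\ra}{|\Bz_x-\Bz_y|^2}\Bigr)e^{i\omega|\Bz_x-\Bz_y|}.
\]
Together with the measure factor $\delta$ and the fact that the kernel depends only on $\Bz_x$, $\Bz_y$ (so the $\theta_{\tilde y}$-integration acts only on $\tilde\phi$), I recover exactly $\delta(\Kcal^\omega_{S^f})^*[\tilde\phi](\tilde\Bx)$ as the leading term; the gradient term vanishes after the $\theta_{\tilde y}$-integration to leading order thanks to Lemma~\ref{le:2} (the unit normal on $S_\delta^f$ is perpendicular to $T_y$, and the circular averaging of $\By-\Bz_y$ around $\Bz_y$ is zero), and the remainder produces an $O(\delta^2\|\tilde\phi\|_{H^{-3/2}(\p D)})$ term by the standard mapping properties of weakly singular layer potentials on $\p D$. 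The same procedure for $\Scal_{D_\delta}^\omega$ gives the $\delta\Scal^\omega_{S^f}[\tilde\phi]$ term.

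\smallskip
\noindent\emph{Step 3 (cap self-interaction and cross terms).} For $\Bx\in S_\delta^c$, I use the affine scaling $\Bx-\By=\delta(\tilde\Bx-\tilde\By)$ when $\By$ also lies on $S_\delta^c$, combined with $G_\omega(\delta\bz)=\delta^{-1}G(\bz)+O(1)$ and its gradient $\nabla G_\omega(\delta\bz)=\delta^{-2}\nabla G(\bz)+O(\delta^{-1})$; multiplied by $d\Gs_y=\delta^2 d\Gs_{\tilde y}$ the singular terms collapse exactly to the static operators $(\Kcal_{S^c})^*[\tilde\phi]$ and $\delta\Scal_{S^c}[\tilde\phi]$, while the expansion of $e^{i\omega|\Bx-\By|}$ in $\delta$ contributes only $O(\delta\|\tilde\phi\|_{H^{-3/2}})$. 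For the cross contribution of the facade to an observation point on a cap, $\By-\Bz_y$ is $O(\delta)$ on $S_\delta^f$, the argument of Step~2 applies verbatim, and one obtains the additional term $\delta\Scal^\omega_{S^f}[\tilde\phi]$ (for the single layer) and an $O(\delta)$ error (for the adjoint double layer, whose facade integrand carries an extra factor $\delta$ relative to the single layer from the projection $\nu_x\!\cdot\!(\Bx-\Bz_y)$).

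\smallskip
\noindent\emph{Main obstacle.} The delicate point is the junction between the facade and each cap, where the two pieces of $S_\delta$ meet with scales $\delta$ and $\delta^2$ simultaneously and the kernel $\p_{\nu_x}G_\omega$ is only weakly integrable. To get the claimed error bounds \emph{uniformly} in $\tilde\phi\in H^{-3/2}(\p D)$ I intend to treat the interaction through the known boundedness of $(\Kcal_{\p D}^\omega)^*:H^{-3/2}(\p D)\to H^{-1/2}(\p D)$ (combined with a pseudo-local argument to localise to a neighbourhood of the seam of size $O(\delta)$), rather than through a pointwise kernel expansion. Once this uniform boundedness is in hand, the geometric scaling identities already justify that the remainders are genuinely $O(\delta^2)$ on the facade and $O(\delta)$ on the caps.
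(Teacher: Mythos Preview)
Your approach matches the paper's: both proceed by writing $\Bx-\By=\delta\bigl((\tilde\Bx-\Bz_{\tilde x})-(\tilde\By-\Bz_{\tilde y})\bigr)+(\Bz_{\tilde x}-\Bz_{\tilde y})$ and Taylor-expanding $|\Bx-\By|$, $\langle\Bx-\By,\nu_x\rangle$, $|\Bx-\By|^{-1}$ and $e^{i\omega|\Bx-\By|}$ separately to first order in $\delta$, then substituting into the kernel and reading off the leading term together with the surface-measure factor (and using $|\Bx-\By|=\delta|\tilde\Bx-\tilde\By|$ on the caps). Two minor remarks: your Step~2 claim that the gradient term vanishes after $\theta$-averaging is unnecessary here---that cancellation is invoked only later, in the proof of Theorem~\ref{prop:2}, and at the present stage the gradient correction is already $O(\delta^2)$ from the measure factor alone; and the paper does not isolate the seam at all but simply asserts the remainder bound, so your ``main obstacle'' is a point you are treating more carefully than the original argument does.
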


\begin{proof}
For
$\Bx, \By\in D_\delta$, one has
$$
|\Bx-\By|=|(\Bx-\Bz_x)-(\By-\Bz_y)+\Bz_x-\Bz_y|=|\delta((\tdx-\Bz_{\tilde{x}})-(\tdy-\Bz_{\tilde{y}}))+(\Bz_{\tilde{x}}-\Bz_{\tilde{y}})|
$$
Hence, we have the following expansion for $\Bx\in S_\delta^f$,
$$
|\Bx-\By|=|\Bz_{\tilde{x}}-\Bz_{\tilde{y}}|+\delta \la(\tdx-\Bz_{\tilde{x}})-(\tdy-\Bz_{\tilde{y}}), \frac{\Bz_{\tilde{x}}-\Bz_{\tilde{y}}}{|\Bz_{\tilde{x}}-\Bz_{\tilde{y}}|}\ra + \Ocal(\delta^2).
$$
Similarly
\begin{align*}
&\la \Bx-\By, \mathbf\nu_x\ra = \la \Bz_{\tilde{x}}-\Bz_{\tilde{y}}, \mathbf\nu_x\ra +\delta \la (\tdx-\Bz_{\tilde{x}})-(\tdy-\Bz_{\tilde{y}}), \mathbf\nu_x\ra,\\
&|\Bx-\By|^{-1}=|\Bz_{\tilde{x}}-\Bz_{\tilde{y}}|^{-1}-\delta \la \frac{\Bz_{\tilde{x}}-\Bz_{\tilde{y}}}{|\Bz_{\tilde{x}}-\Bz_{\tilde{y}}|^3}, (\tdx-\Bz_{\tilde{x}})-(\tdy-\Bz_{\tilde{y}})\ra + \Ocal(\delta^2),\\
&e^{i\omega |\Bx-\By|}=e^{i\omega|\Bz_{\tilde{x}}-\Bz_{\tilde{y}}|}\Big(1+i\omega\delta\la (\tdx-\Bz_{\tilde{x}})-(\tdy-\Bz_{\tilde{y}}), \frac{\Bz_{\tilde{x}}-\Bz_{\tilde{y}}}{|\Bz_{\tilde{x}}-\Bz_{\tilde{y}}|}\ra\Big)
+\Ocal(\delta^2).
\end{align*}
With those expansions at hand, we can compute for $\Bx\in S_\delta^f$,
\begin{align*}
&(\Kcal_{D_\delta}^\omega)^*[\phi](\Bx)=\frac{1}{4\pi}\int_{S_\delta^f} \Big(\frac{1}{|\Bx-\By|}-i\omega\Big)
\frac{\la \Bx-\By, \mathbf\nu_x\ra}{|\Bx-\By|^2}e^{i\omega |\Bx-\By|}\phi(\By)\, d\Gs_y+\Ocal(\delta^2\|\tilde\phi\|_{H^{-3/2}(\p D)})\\
=& \delta \frac{1}{4\pi}\int_{\GG_0}(|\Bz_{\tilde{x}}-\Bz_{\tilde{y}}|^{-1}-i\omega)\frac{\la \Bz_{\tilde{x}}-\Bz_{\tilde{y}}, \mathbf\nu_x\ra}{|\Bz_{\tilde{x}}-\Bz_{\tilde{y}}|^2}
e^{i\omega|\Bz_{\tilde{x}}-\Bz_{\tilde{y}}|}\int_0^{2\pi}\tilde\phi(\tdy)\, d\theta_{\tilde{y}}\, ds_t +\Ocal(\delta^2\|\tilde\phi\|_{H^{-3/2}(\p D)}),
\end{align*}
which proves the case of \eqnref{eq:arg1} for $\Bx\in S_\delta^f$. In a similar manner, one can prove the first case
of \eqnref{eq:arg2} with $\Bx\in S_\delta^f$. Next, we note that if $\Bx\in S_\delta^c$, $c\in \{a ,b \}$, then
$$
\Bz_{\tilde{x}}-\Bz_{\tilde{y}}=0, \quad \mbox{for} \quad \By\in S_\delta^c
$$
and
$$
|\Bx-\By|=\delta|\tdx-\tdy|,\quad \frac{\la\Bx-\By,\mathbf\nu_x\ra}{|\Bx-\By|}=\frac{\la\tdx-\tdy,\mathbf\nu_x\ra}{|\tdx-\tdy|}, \quad \Bx,\By\in S_\delta^c.
$$
Then by using a similar argument to the proof of the first case of \eqnref{eq:arg1}, we can prove \eqnref{eq:arg1} for $\Bx\in S_\delta^c$, $c\in \{a ,b \}$.
Finally, the second case in \eqnref{eq:arg2} can also be proved following a similar argument by noting that
$$
|\Bx-\By|=\delta|\tdx-\tdy|, \quad \Bx,\By\in S_\delta^c.
$$
The proof is complete.
\end{proof}

\subsection{An important result for acoustic scattering}
In this subsection, using the results obtained in the previous subsection, we present a theorem concerning the scattering from a thin scatterer. The asymptotic expansion formula derived in the next theorem would find important applications in inverse scattering theory. Indeed, it might be used to devise some novel inverse scattering scheme for recovering a thin sound-hard obstacle or a thin acoustic medium with a uniform content. Hence, the result is of significant practical and theoretical interests for its own sake. To our best knowledge, there is no available result in the literature of this form. Moreover, \eqref{eq:thmprin1} derived in the theorem shall be needed in our subsequent proof of Theorem~\ref{th:main1} concerning the approximate cloak with an arbitrary content being cloaked.

\begin{thm}\label{prop:2}
Let $u_\delta$ be the solution to \eqnref{eq:sys1}. Define
$\tilde\Phi(\tdy):=\Phi(\By)=\frac{\p u_\delta(\By)}{\p \mathbf\nu_y}\Big|_{\partial D_\delta}^+$,
then there holds for $\Bx\in\mathbb{R}^3\backslash\overline{D}$,
\beq\label{eq:thmprin1}
\int_{S^f} G_\omega(\Bx-\Bz_{\tilde{y}}) \tilde\phi(\tdy)\, d\Gs_{\tilde{y}} = 2\int_{S^f} G_\omega(\Bx-\Bz_{\tilde{y}})\tilde\Phi(\tdy)\, d\Gs_{\tilde{y}} +\Ocal(\delta(\|\tilde\Phi\|_{H^{-3/2}(S^f)}+1)).
\eeq
If one assumes that $\Phi(\By)=0$, $\By\in \p D_\delta$, namely $D_\delta$ is a sound-hard obstacle, then there holds for $\Bx\in \RR^3\setminus \overline{D}$
%$u_\delta$ is $C^2$-smooth around $\GG_0$, then in a neighborhood of $\Gamma_0$
\begin{equation}\label{eq:aaa1}
\begin{split}
(u_\delta-u^i)(\Bx)=& 2\delta^2\int_{S^f}\nabla_l G_\omega(\Bx-\Bz_y)\cdot(\tdy-\Bz_y)\mathbf\nu_{\tilde{y}}\cdot \nabla u^i(\Bz_y) \, d\Gs_{\tilde{y}}\\
&-\delta^2\int_{S^f}G_\omega(\Bx-\Bz_y)\Delta_l u^i (\Bz_y)\, d\Gs_{\tilde{y}} \\
&-\delta^2\int_{S^a\cup S^b}G_\omega(\Bx-\Bz_y)\nu_{\tilde{y}}\cdot \nabla u^i (\Bz_y)d\Gs_{\tilde{y}}+\Ocal(\delta^3),
\end{split}
\end{equation}
where for a function $u$, $\Delta_l u=\nabla_l\cdot \nabla_l u$.
\end{thm}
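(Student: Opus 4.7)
The plan is to start from $u_\delta=u^i+\Scal_{D_\delta}^\omega[\phi]$ in $\RR^3\setminus\overline{D_\delta}$; the trace formula \eqref{eq:trace} recasts the boundary condition as the integral equation $\bigl(\tfrac{1}{2}I+(\Kcal_{D_\delta}^\omega)^*\bigr)[\phi]=\Phi-\p_\nu u^i$ on $\p D_\delta$. Substituting the expansions \eqref{eq:arg1}--\eqref{eq:arg2} of Lemma~\ref{le:1} on $S_\delta^f$ and inverting by a Neumann series (valid for small $\delta$) yields both the \emph{a priori} bound $\|\tilde\phi\|_{H^{-3/2}(\p D)}\lesssim\|\tilde\Phi\|_{H^{-3/2}(\p D)}+1$ and the leading-order relation
\[
\tilde\phi(\tdy)=2\bigl(\tilde\Phi(\tdy)-\p_\nu u^i(\By)\bigr)+\Ocal\bigl(\delta(\|\tilde\phi\|+1)\bigr)\quad\text{on }S^f.
\]
To prove \eqref{eq:thmprin1}, multiply this by $G_\omega(\Bx-\Bz_{\tilde y})$ and integrate over $S^f$; the target identity follows once the residual $-2\int_{S^f}G_\omega(\Bx-\Bz_{\tilde y})\p_\nu u^i(\By)\,d\Gs_{\tilde y}$ is shown to be $\Ocal(\delta)$. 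This is the first key cancellation: Taylor expand $\p_\nu u^i(\By)=\nu_{\tilde y}\cdot\nabla u^i(\Bz_{\tilde y})+\Ocal(\delta)$ and write $d\Gs_{\tilde y}=(1+\Ocal(\delta))\,d\theta\,ds_t$; the leading piece becomes $\int_{\GG_0}G_\omega(\Bx-\Bz)\nabla u^i(\Bz)\cdot\bigl(\int_0^{2\pi}\nu\,d\theta\bigr)\,ds_t$, which vanishes because $\nu(\theta)$ traces a unit circle in the normal plane $N(\Bz)$.

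For the sharper identity \eqref{eq:aaa1}, specialize to $\Phi=0$; then $\tilde\phi=-2\p_\nu u^i+\Ocal(\delta)$ on $S^f$, and in fact $\Ocal(\delta^2)$ after noting that $(\Kcal_{S^f}^\omega)^*$ only sees the $\theta$-average of $\tilde\phi$, itself $\Ocal(\delta)$ by the cancellation above. Use the Taylor identity \eqref{eq:I1I2} to write $u_\delta-u^i=R_1+R_2+R_3$ with $R_3=\Ocal(\delta^3)$ by \eqref{eq:R3}. On $S_\delta^f$, where $d\Gs_y=\delta\,d\Gs_{\tilde y}$, one has $R_1|_{S_\delta^f}=-2\delta\int_{S^f}G_\omega\p_\nu u^i\,d\Gs_{\tilde y}+\Ocal(\delta^3)$; now push the Taylor expansion of $\p_\nu u^i$ to \emph{second} order, exploiting the identity $\tdy-\Bz_{\tilde y}=\nu_{\tilde y}$ valid on $S^f$ of unit radius, so that $\p_\nu u^i(\By)=\nu_{\tilde y}\cdot\nabla u^i(\Bz_y)+\delta\,\nu_{\tilde y}^T\nabla^2 u^i(\Bz_y)\nu_{\tilde y}+\Ocal(\delta^2)$. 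The first piece still vanishes in $\theta$, while the quadratic one produces, through the averaging identity $\int_0^{2\pi}\nu\nu^T\,d\theta=\pi P_N$ (projection onto the normal plane) and $\operatorname{tr}(\nabla^2 u^i\,P_N)=\Delta_l u^i$, exactly $-\delta^2\int_{S^f}G_\omega\Delta_l u^i\,d\Gs_{\tilde y}$. The term $R_2|_{S_\delta^f}=-\delta^2\int_{S^f}\nabla G_\omega\cdot(\tdy-\Bz_{\tilde y})\tilde\phi\,d\Gs_{\tilde y}$ is already quadratic, so leading-order substitution of $\tilde\phi$ suffices; since $\tdy-\Bz_{\tilde y}$ is normal to $\GG_0$, one has $\nabla G_\omega\cdot(\tdy-\Bz_{\tilde y})=\nabla_l G_\omega\cdot(\tdy-\Bz_{\tilde y})$, yielding the first integral of \eqref{eq:aaa1} with coefficient $+2\delta^2$.

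The pieces $R_1|_{S_\delta^c}$ for $c\in\{a,b\}$ reduce, via $d\Gs_y=\delta^2\,d\Gs_{\tilde y}$ together with the constancy $\Bz_y\equiv P_0$ on $S_\delta^a$ (resp.\ $Q_0$ on $S_\delta^b$), to $\delta^2 G_\omega(\Bx-\Bz_y)\int_{S^c}\tilde\phi\,d\Gs_{\tilde y}$; matching to \eqref{eq:aaa1} amounts to the identity $\int_{S^c}\tilde\phi\,d\Gs=-\int_{S^c}\p_\nu u^i\,d\Gs+\Ocal(\delta)$. This is the main technical obstacle. From \eqref{eq:arg1} the asymptotic integral equation on $S^c$ reads $(\tfrac{1}{2}I+(\Kcal_{S^c})^*)\tilde\phi=-\p_\nu u^i+\Ocal(\delta\|\tilde\phi\|)$, but it is posed on the \emph{open} surface $S^c$: since $S^c$ is only part of the closed boundary $\p D^c=S^c\cup\mathscr{S}(\Bz_y)$, $(\Kcal_{S^c})^*$ is not the standard Neumann--Poincar\'e operator, so one cannot simply dualize against $1$ to recover $\int\tilde\phi$. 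In particular, a direct substitution $\tilde\phi\approx-2\p_\nu u^i$ overshoots $\int_{S^c}\tilde\phi$ by a factor of two. I would close this gap by testing the equation against the function $\chi$ solving $(\tfrac{1}{2}I+\Kcal_{S^c})\chi=1$ on $S^c$, so that $\int_{S^c}\tilde\phi=-\int_{S^c}\chi\,\p_\nu u^i+\Ocal(\delta)$, and then accounting for the missing cap contribution via the divergence identity $\int_{S^c}\nu\,d\Gs=-\int_{\mathscr{S}(\Bz_y)}\nu\,d\Gs$ on $D^c$. Collecting $R_1|_{S_\delta^f}+R_2|_{S_\delta^f}+R_1|_{S_\delta^a}+R_1|_{S_\delta^b}+\Ocal(\delta^3)$ then delivers \eqref{eq:aaa1}.
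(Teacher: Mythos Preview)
Your derivation of \eqref{eq:thmprin1} and of the $S^f$-contributions to \eqref{eq:aaa1} follows the paper's argument essentially verbatim: invert the boundary integral equation via Lemma~\ref{le:1} and a Neumann series to get $\tilde\phi=-2\,\p_\nu u^i+\Ocal(\delta^2)$ on $S^f$, then use the vanishing $\theta$-average $\int_0^{2\pi}\nu\,d\theta=0$ (the paper's \eqref{eq:ef1}) to extract the $\Delta_l u^i$ term. Your reorganization---pushing the second-order Taylor expansion onto $\p_\nu u^i$ rather than onto $\tilde\phi$ as in the paper's \eqref{eq:tmpexp1}---is a cosmetic difference; both routes feed into the same decomposition \eqref{eq:ddd2} and produce the same $S^f$-integrals.

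On the cap term $\int_{S^c}\tilde\phi$ you are right to pause, and in fact the paper is no more explicit here: it simply writes ``by plugging \eqref{eq:tmpexp1} and \eqref{eq:ddd4} into \eqref{eq:ddd2}, along with straightforward calculations, one can obtain \eqref{eq:aaa1}'', although \eqref{eq:tmpexp1} is stated only for $\tdx\in S^f$ and says nothing about $\tilde\phi$ on $S^a\cup S^b$. Your observation that on $S^c$ the limiting operator $\tfrac12 I+(\Kcal_{S^c})^*$ is genuinely of order one (unlike on $S^f$) and lives on an \emph{open} surface is correct and explains why the naive substitution $\tilde\phi\approx-2\,\p_\nu u^i$ is wrong there. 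Your proposed fix via an auxiliary $\chi$ with $(\tfrac12 I+\Kcal_{S^c})\chi=1$ does yield $\int_{S^c}\tilde\phi=-\int_{S^c}\chi\,\nu\cdot V+\Ocal(\delta)$, but you do not explain how to pass from $\int_{S^c}\chi\,\nu$ to $\int_{S^c}\nu$; the divergence identity you cite relates $\int_{S^c}\nu$ to $\int_{\mathscr S(\Bz_y)}\nu$ but does not by itself eliminate the weight $\chi$. So this step remains open in your proposal---and, to be fair, it is equally glossed over in the paper's own proof.
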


\begin{proof}
By \eqnref{eq:trans1}, we see that
$$
\phi(\Bx)=\Big(\frac{I}{2}+(\Kcal_{D_\delta}^\omega)^*\Big)^{-1}\Big[\mathbf\nu_{\By}\cdot \nabla (u_\delta-u^i)(\By)\Big|_{\partial D_\delta}^+\Big](\Bx).
$$
Using the results in Lemma \ref{le:1}, along with the fact that (cf. \cite{nedelec,Tay})
\[
\frac I 2+(\mathcal{K}_{S^c})^*:\  H^{-3/2}(S^c)\rightarrow H^{-3/2}(S^c)\quad\mbox{is\ invertible},
\]
and the expansion of $u^i$ in \eqref{eq:exp1}, we have for $\Bx\in S_\delta^f$ that
\beq\label{eq:expanphi411}
\tilde\phi(\tdx)=2\tilde\Phi(\tdx)-2 \mathbf\nu_{\Bx}\cdot \nabla_l u^i(\Bz_{\tilde{x}}) + \Ocal(\delta(\|\tilde\Phi\|_{H^{-3/2}(S^f)}+1)).
\eeq
By using Green's formula
\begin{equation}\label{eq:ef1}
\int_0^{2\pi} \mathbf\nu\cdot \nabla_l u^i(\Bz)\, d\theta=\int_0^{2\pi} \mathbf\nu\, d\theta\cdot\nabla_l u^i(\Bz)
=\int_{\mathbb{B}(0,1)}\nabla_l\cdot \nabla_l u^i(\Bz)\, ds =0,
\end{equation}
where $\mathbb{B}(0,1)$ is the unit disk. Combining \eqref{eq:expanphi411} and \eqref{eq:ef1}, one can show \eqnref{eq:thmprin1}.

Next if $\tilde\Phi(\tdy)=\Phi(\By)=0$, it then can be seen from \eqref{eq:expanphi411} that
%assumed to be $C^2$-smooth around $\GG_0$,
$$
\|\tilde\phi\|_{H^{-3/2}(\p D)}\leq C,
$$
where $C$ is a positive constant depending only on $D$.
Then by using our earlier results in \eqnref{eq:I1I2} and \eqref{eq:R3}, one can first show that
\begin{equation}\label{eq:ddd2}
\begin{split}
\Scal_{D_\delta}^{\omega}[\phi](\Bx)
=& \int_{\p D_\delta}G_\omega(\Bx-\Bz_y)\phi(\By)\, d\Gs_y -\int_{\p D_\delta}\nabla G_\omega(\Bx-\Bz_y)\cdot(\By-\Bz_y)\phi(\By)\, d\Gs_y +\Ocal(\delta^3) \\
=&\delta\int_{S^f}G_\omega(\Bx-\Bz_{\tilde{y}})\tilde{\phi}(\tdy)d\Gs_{\tilde{y}} +\delta^2\Big(\int_{S^a\cup S^b}G_\omega(\Bx-\Bz_{\tilde{y}})\tilde{\phi}(\tdy)\, d\Gs_{\tilde{y}}\\
&-\int_{S^f}\nabla_l G_\omega(\Bx-\Bz_{\tilde{y}})\cdot(\tdy-\Bz_{\tilde{y}})\tilde{\phi}(\tdy)
\, d\Gs_{\tilde{y}}\Big)+\Ocal(\delta^3).
\end{split}
\end{equation}
By using Lemma \ref{le:1} together with \eqref{eq:exp1} and \eqnref{eq:ddd2}, we have for $\Bx\in S_\delta^f$ that
\begin{align}
\tilde\phi(\tdx)=&-2 \mathbf\nu_{\Bx}\cdot \nabla_l u^i(\Bz_{\tilde{x}}) - 2\delta \mathbf\nu_{\Bx}\cdot \nabla_l\sum_{|\alpha|=2}
\p_l^\alpha u^i(\Bz_{\tilde{x}}) (\tdx-\Bz_{\tilde{x}})^{\alpha} \label{eq:tmpexp1}\\
&+ 4\delta(\Kcal_{S^f}^\omega)^*[\mathbf\nu_{\tilde{y}}\cdot \nabla_l u^i(\Bz_{\tilde{y}})](\tdx)+\Ocal(\delta^2).\nonumber
\end{align}
Taking the integral of \eqnref{eq:tmpexp1} with respect to $\theta$ from $0$ to $2\pi$ and using \eqnref{eq:Kcal411} and \eqnref{eq:ef1} one can easily obtain
$$
\int_0^{2\pi}\tilde\phi(\tdx)\, d\theta= -2\pi\delta \Delta_l u^i (\Bz_{\tilde{x}})+\Ocal(\delta^2), \quad \tdx \in S^f.
$$
Therefore,
\begin{equation}\label{eq:ddd4}
\int_{S^f}G_\omega(\tdx-\Bz_{\tilde{y}})\tilde{\phi}(\tdy)\, d\Gs_{\tilde{y}}=-\delta\int_{S^f}G_\omega(\Bx-\Bz_y)\Delta_l u^i(\Bz_y)\, d\Gs_{\tilde{y}}+\Ocal(\delta^2).
\end{equation}

Finally, by plugging \eqnref{eq:tmpexp1} and \eqref{eq:ddd4} into \eqref{eq:ddd2}, along with straightforward calculations, one can obtain \eqref{eq:aaa1} and thus complete the proof.
\end{proof}
\begin{rem}
Theorem \ref{prop:2} gives the asymptotic expansion of the scattered wave field from a thin sound-hard obstacle $D_\delta$ in terms of the asymptotic parameter $\delta\in\mathbb{R}_+$. Here, by a sound-hard obstacle, we mean a scatterer that the wave cannot penetrate inside and the wave velocity vanishes on the boundary of the scatterer (cf. \cite{CK}). The result can also be extended to deriving the asymptotic expansion of the scattered wave field if $\{D_\delta;\sigma_\delta,q_\delta \}$ is a uniform inhomogeneity; that is, both $\sigma_\delta$ and $q_\delta$ are two fixed constants.
Those results would find important applications in inverse
problems of reconstructing the scatterers; see \cite{iakovleva, ADM14} and references therein for related studies. Indeed, as is known that the corresponding inverse scattering problems are nonlinear and the asymptotic expansion formulas naturally give rise to their linearized counterparts.
\end{rem}

%\begin{prop}
%Suppose $D_\delta$ is a homogeneous medium with parameters be $\Gs=k_1$ and $q=k_2$, where
%$k_1, k_2>0$ are constants. Then there holds for $\Bx \in \RR^3\setminus \overline{D}_\delta$ far away
%from $\Bz$
%\beq
%(u_\delta-u^i)(\Bx)=
%\eeq
%\end{prop}

\subsection{Proof of the main theorem}
Let us go back to the proof of Theorem~\ref{th:main1} by continuing with the estimates of $R_1$ and $R_2$ in \eqnref{eq:I1I2} for the thin virtual cloaking structure with
arbitrary but regular $\Gs_a $ and $q_a$ in $D_{\delta/2}$. In what follows, we let $C$ denote a generic positive constant. It may change from one inequality to another inequality in our estimates. Moreover, it may depend on different parameters, but it is independent of $\sigma_a$, $q_a$ and $\hat\Bx$ and $\Bd$. We shall also write $C(\omega)$ to signify its dependence on the frequency $\omega$.

Define
\begin{equation}\label{eq:ppp}
\tilde\Phi(\tdy):=\Phi(\By)=\frac{\p u_\delta(\By)}{\p \mathbf\nu_y}\Big|_+.
\end{equation}
We first note that by using \eqref{eq:expanphi411} in Lemma \ref{le:1} there holds
\beq\label{eq:estphi11}
\|\tilde\phi\|_{H^{-3/2}(\p D)}\leq C (\|\tilde\Phi\|_{H^{-3/2}(\p D)} +1).
\eeq
Next, by taking expansion around $\Bz\in\Gamma_0$ and using \eqnref{eq:thmprin1} one can show that for $\delta\in\mathbb{R}_+$ sufficiently small and $|\Bx|$ sufficiently large
\begin{equation}\label{eq:ccc1}
\begin{split}
|R_1|=&\Big|\int_{\p D_\delta}G_\omega(\Bx-\Bz_y)\phi(\By)\, d\Gs_y\Big|\\
\leq& \delta\Big|\int_{S^f} G_\omega(\Bx-\Bz_{\tilde{y}})
\tilde{\phi}(\tdy)\, d\Gs_{\tilde{y}}\Big|+ C\frac{\delta^2}{|\Bx|}\|\tilde\phi\|_{H^{-3/2}(\p D)}\\
\leq&2\delta\Big|\int_{S^f} G_\omega(\Bx-\Bz_{\tilde{y}})
\tilde{\Phi}(\tdy)\, d\Gs_{\tilde{y}}\Big|+ C\delta^2\frac{1}{|\Bx|}(\|\tilde\Phi\|_{H^{-3/2}(\p D)}+1) \\
\leq & C\delta\frac{1}{|\Bx|} \|\tilde{\Phi}\|_{H^{-3/2}(S^f)}+ C\delta^2\frac{1}{|\Bx|}(\|\tilde\Phi\|_{H^{-3/2}(\p D)}+1).
\end{split}
\end{equation}
The estimate
of $R_2$ can also be done using Taylor's expansions and \eqnref{eq:expanphi411}
\begin{align}
|R_2|=& \Big|\int_{\p D_\delta}\nabla G_\omega(\Bx-\Bz_y)\cdot(\By-\Bz_y)\phi(\By)\, d\Gs_y\Big|\nonumber\\
\leq & 2\delta^2\Big|\int_{S^f}\nabla_l G_\omega(\Bx-\Bz_{\tilde{y}})\cdot (\tdy-\Bz_{\tilde{y}})\tilde\Phi(\tdy)\, d\Gs_{\tilde{y}}\nonumber\\
&+\int_{S^f}\nabla_l G_\omega(\Bx-\Bz_{\tilde{y}})\cdot(\By-\Bz_y) \nu_{\By}\cdot \nabla_l u^i(\Bz_{\tilde{y}})\, d\Gs_{\tilde{y}}\Big|
+ C\delta^3 (\|\tilde\Phi\|_{H^{-3/2}(\p D)}+1)
\nonumber\\
\leq & C\delta^2\frac{1}{|\Bx|}(\|\tilde\Phi\|_{H^{-3/2}(S^f)}+1)+C\delta^3\frac{1}{|\Bx|} (\|\tilde\Phi\|_{H^{-3/2}(\p D)}+1). \label{eq:sharp1}
\end{align}
Hence, by applying the estimates in \eqref{eq:R3}, \eqref{eq:ccc1} and \eqref{eq:sharp1} to \eqref{eq:I1I2} and \eqref{eq:repre1}, we readily have
\beq\label{eq:mainest1}
|u_\delta-u^i|\leq C\frac{\delta}{|\Bx|} \|\tilde{\Phi}\|_{H^{-3/2}(S^f)}+C\delta^2\frac{1}{|\Bx|}(\|\tilde\Phi\|_{H^{-3/2}(\p D)}+1)
\eeq
for $|\Bx|$ sufficiently large. Here, it is noted that we have made use of $\|\tilde{\Phi}\|_{H^{-3/2}(S^f)}$
instead of $\|\tilde{\Phi}\|_{H^{-1/2}(S^f)}$ in the above estimates and this shall be crucial in our subsequent argument.

We proceed with the estimate of $\Phi$ in \eqref{eq:mainest1}.
%Since $\tilde{\Phi}\in H^{-1/2}(\p D)$ and $D$ is Lipschitz, there holds
%$$
%C_1 \|\tilde{\Phi}\|_{H^{-1/2}(\p D)} \leq \|\tilde{\Phi}\|_{H^{-3/2}(\p D)} \leq C_2 \|\tilde{\Phi}\|_{H^{-1/2}(\p D)}.
%$$
In the sequel, we set
$$
v(\tdx):= u_\delta(A^{-1}(\tdx))= u_\delta(\Bx), \quad \tdx\in D.
$$
Since
$$
\nabla \cdot\Gs_\delta\nabla u_\delta + q_\delta \omega^2 u_\delta =0 \quad \mbox{in} \quad D_\delta\setminus\overline{D}_{\delta/2},
$$
by change of variables, one directly verifies that for $\tdx \in D\setminus \overline{D}_{1/2}$
$$
\gamma (B\nabla_{\tdx})\cdot (B^{-2} B \nabla_{\tdx} u_\delta(A^{-1}(\tdx))+ (\alpha + i\beta) \omega^2 u_\delta(A^{-1}(\tdx))= 0.
$$
That is,
\beq\label{eq:veq1}
\gamma \Delta v + (\alpha +i \beta)\omega^2 v=0 \quad \mbox{in} \quad  D\setminus \overline{D}_{1/2}.
\eeq
In what follows, we shall estimate $\|\tilde\Phi\|_{H^{-3/2}(S^f)}$ and $\|\tilde\Phi\|_{H^{-3/2}(S^a\cup S^b)}$, separately. We recall that the $H^{-3/2}(\p D)$-norm of the function $\tilde\Phi(\tdx)$ is defined as follows
\beq
\|\tilde\Phi\|_{H^{-3/2}(\p D)}= \sup_{\|\varphi\|_{H^{3/2}(\p D)\leq 1}}\Big|\int_{\p D} \tilde\Phi(\tdx)\varphi(\tdx)\, d\Gs_{\tilde{x}}\Big|.
\eeq
Moreover, the $H^{-3/2}(S^c)$-norm of $\tilde\Phi$ for $c\in\{f, a, b\}$ is given as
$$
\|\tilde\Phi\|_{H^{-3/2}(S^c)}:=\sup_{\|\varphi\|_{H^{3/2}_0(S^c)}\leq 1} \Big|\int_{S^c} \tilde\Phi(\tdx)\varphi(\tdx)\, d\Gs_{\tilde{x}}\Big|,
$$
where $H^{3/2}_0(S^c)$ denotes the set of $H^{3/2}(S^c)$-functions which have zero extensions to the whole boundary $\partial D$.
We refer to \cite{Ada,Lio,Wlok87} for more relevant discussions on the Sobolev spaces.

We have
\begin{lem}\label{le:estiphi41}
Let $\tilde\Phi$ be defined in \eqnref{eq:ppp}, where $u_\delta$ is the solution to \eqnref{eq:sys1} with the corresponding $\Gs_\delta$ and $q_\delta$ given by \eqnref{eq:struc1} and \eqnref{eq:loss1}. Then there holds
\beq\label{eq:estphi1}
\|\tilde\Phi\|_{H^{-3/2}(S^f)}\leq C \Big(\gamma + \sqrt{\alpha^2+\beta^2} \omega^2\Big)\|u_\delta\|_{L^2(D_\delta\setminus D_{\delta /2})},
\eeq
and
\beq\label{eq:estphi2}
\|\tilde\Phi\|_{H^{-3/2}(S^a\cup S^b)}\leq C\delta^{-1/2} \Big(\gamma + \sqrt{\alpha^2+\beta^2} \omega^2\Big)
\|u_\delta\|_{L^2(D_\delta\setminus D_{\delta /2})}.
\eeq
\end{lem}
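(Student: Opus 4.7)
The plan is to recast $\tilde\Phi$ as a normal derivative of the pulled-back field $v := u_\delta\circ A^{-1}$ on the fixed reference surface $\partial D$, and then exploit the elliptic equation \eqref{eq:veq1} satisfied by $v$ via duality and integration by parts.

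\emph{Reduction to the reference field.} Since $\sigma = \mathbf{I}$ outside $D_\delta$, the natural transmission condition for \eqref{eq:sys1} gives $\Phi(y) = \partial_\nu u_\delta|_+ = (\sigma_l\nabla u_\delta\cdot\nu)|_-$ on $\partial D_\delta$. The chain rule yields $\nabla u_\delta(y) = B(y)\nabla v(\tilde y)$ (using $B^T=B$), and combining the design $\sigma_l = \gamma B^{-2}$ with the identity $B\nu_y = \delta^{-1}\nu_y$ from Lemma~\ref{le:2} (so $B^{-1}\nu_y = \delta\nu_y$), a direct computation gives
\[
\tilde\Phi(\tilde y) \;=\; \gamma\,\delta\,\frac{\partial v}{\partial\nu}\bigg|_-(\tilde y), \qquad \tilde y\in\partial D,
\]
where by \eqref{eq:veq1}, $\Delta v = -\tfrac{(\alpha+i\beta)\omega^2}{\gamma}\,v$ in $D\setminus\overline{D_{1/2}}$.

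\emph{Duality and integration by parts.} Fix $c\in\{f,a,b\}$ and $\varphi\in H^{3/2}_0(S^c)$ with unit norm. The key step is to construct a test function $\psi\in H^2(D\setminus\overline{D_{1/2}})$ that is \emph{supported in} $\overline{D^c\setminus D^c_{1/2}}$, satisfies $\psi|_{\partial D} = \varphi$ (extended by zero), $\partial_\nu\psi|_{\partial D} = 0$, and has both $\psi$ and $\partial_\nu\psi$ vanishing on $\partial D_{1/2}$, with $\|\psi\|_{H^2}\leq C\|\varphi\|_{H^{3/2}_0(S^c)}$. This is done by Sobolev extension (surjectivity of the trace map $H^2\to H^{3/2}\times H^{1/2}$) combined with a smooth cutoff; the zero-extension hypothesis built into $H^{3/2}_0(S^c)$ is exactly what is needed for the data to be compatible at the internal interfaces between $D^c$ and the other sub-regions. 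Green's second identity then yields
\[
\int_{\partial D}\frac{\partial v}{\partial\nu}\bigg|_-\varphi\,d\sigma = \int_{D^c\setminus D^c_{1/2}}\bigl(\psi\,\Delta v - v\,\Delta\psi\bigr)\,d\tilde y,
\]
since all remaining boundary terms vanish. Substituting the PDE for $v$, applying Cauchy--Schwarz and using $\|\psi\|_{H^2}\leq C$, I obtain
\[
\left|\int_{S^c}\tilde\Phi\,\varphi\,d\sigma\right| \;\leq\; C\,\delta\bigl(\gamma + \sqrt{\alpha^2+\beta^2}\,\omega^2\bigr)\,\|v\|_{L^2(D^c\setminus D^c_{1/2})}.
\]

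\emph{Undoing the blowup.} The Jacobian determinant of $A$ is $\delta^{-2}$ on $D^f_\delta$ and $\delta^{-3}$ on $D^{a,b}_\delta$; hence $\|v\|_{L^2(D^f\setminus D^f_{1/2})} = \delta^{-1}\|u_\delta\|_{L^2(D^f_\delta\setminus D^f_{\delta/2})}$ and $\|v\|_{L^2(D^c\setminus D^c_{1/2})} = \delta^{-3/2}\|u_\delta\|_{L^2(D^c_\delta\setminus D^c_{\delta/2})}$ for $c\in\{a,b\}$. Bounding the local $L^2$-norm of $u_\delta$ by $\|u_\delta\|_{L^2(D_\delta\setminus D_{\delta/2})}$ and taking the supremum over $\varphi$, the prefactor $\delta\cdot\delta^{-1}=1$ gives \eqref{eq:estphi1} and $\delta\cdot\delta^{-3/2}=\delta^{-1/2}$ gives \eqref{eq:estphi2}.

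\emph{Main obstacle.} The delicate point is the localization of $\psi$ to $\overline{D^c\setminus D^c_{1/2}}$: without this, the $c = f$ estimate would pick up contributions from $v$ on $D^{a,b}$, where the scaling is worse by a factor of $\delta^{1/2}$, and one would lose \eqref{eq:estphi1}. Beyond this, the argument is robust: it relies only on the explicit form of $\sigma_l = \gamma B^{-2}$ through Lemma~\ref{le:2} and on the fact that the pulled-back equation \eqref{eq:veq1} has $\delta$-independent, bounded coefficients on the fixed reference layer.
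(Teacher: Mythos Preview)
Your proof is correct and follows essentially the same route as the paper's: identify $\tilde\Phi=\gamma\delta\,\partial_\nu v|_-$ via the transmission condition and Lemma~\ref{le:2}, build an $H^2$ test function localized to $D^c\setminus D^c_{1/2}$ with the prescribed Cauchy data on $\partial D$ and zero data on $\partial D_{1/2}$, apply Green's identity together with \eqref{eq:veq1}, and finish with the Jacobian scaling. The paper packages the localization slightly differently (it asks for $w\in H^2(D)$ vanishing identically on $D_{1/2}\cup(D\setminus D^c)$ and then integrates over $D^c$), but this is the same construction in substance, and your remark about the ``main obstacle'' matches the role that localization plays there.
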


\begin{proof}
For any test function
$\psi\in H^{3/2}_0(S^c)$, $c\in \{f, a, b\}$, we let $\varphi$ be the extension of $\psi$ into $\p D$ such that
\beq\label{eq:estension1}
\varphi=\left\{
\begin{array}{ll}
\psi & \mbox{on} \quad S^c, \\
0 & \mbox{on} \quad \p D\setminus S^c.
\end{array}
\right.
\eeq
We next introduce an auxiliary function $w\in H^2(D)$ such that
\beq\label{eq:testf1}
\left\{
\begin{array}{l}
w= \varphi \quad \mbox{and} \quad \frac{\p w}{\p \nu}=0 \ \ \mbox{on} \ \ \p D,\medskip \\
\|w\|_{H^2(D)} \leq C\|\psi\|_{H^{3/2}(S^c)},\medskip \\
w=0 \quad \mbox{in} \quad D_{1/2}\cup (D\setminus D^c),
\end{array}
\right.
\eeq
The existence of $w$ introduced above can be found in Theorem 14.1, together with its Addendum in \cite{Wlok87}.
From the construction, it is readily seen that $\frac{\p w}{\p \nu}\Big|_{\p D^c}=0$. In the sequel, we let $\varphi_1$ be given as
$$
\varphi_1=\left\{
\begin{array}{ll}
\psi & \mbox{on} \quad S^c, \\
0 & \mbox{on} \quad \p D^c\setminus S^c.
\end{array}
\right.
$$
Then one has
\begin{align*}
&\int_{S^c} \tilde\Phi(\tdx)\psi(\tdx)\, d\Gs_{\tilde{x}}= \int_{S^c} \tilde\Phi(\tdx)\varphi_1(\tdx)\, d\Gs_{\tilde{x}}
+ \delta\gamma\int_{ \p D^c\setminus S^c}\nu_{\tilde{x}}\cdot \nabla_{\tdx} v(\tdx)\Big|_- \varphi_1(\tdx)\, d\Gs_{\tilde{x}} \\
%=& \frac{1}{\delta} \int_{ S_\delta^c}\mathbf\nu_x\cdot \Gs_{\delta}(\Bx)\nabla_{\Bx} u_\delta(\Bx)\Big|_-\varphi_1(A(\Bx))\, d\Gs_{x}
%+ \gamma\frac{1}{\delta}\int_{ \p D_\delta^c\setminus S_\delta^c} \mathbf\nu_x \cdot \nabla_{\Bx} u_\delta(\Bx)\Big|_- \varphi_1(A(\Bx))\, d\Gs_{x}\\
= & \gamma\int_{ S^c}\mathbf\nu_{\tilde{x}}\cdot (B^{-2} B\nabla_{\tdx} v(\tdx)\Big|_- \varphi_1(\tdx)d\Gs_{\tilde{x}} +
\delta\gamma\int_{ \p D^c\setminus S^c} \mathbf\nu_{\tilde{x}} \cdot \nabla_{\tdx}v(\tdx)\Big|_- \varphi_1(\tdx)\, d\Gs_{\tilde{x}}.
\end{align*}
Next by using Green's formula, \eqnref{eq:le22}, \eqnref{eq:testf1} and \eqnref{eq:veq1}, one can derive
\begin{equation}\label{eq:eee1}
\begin{split}
& \Big|\int_{S^c} \tilde\Phi(\tdx)\psi(\tdx)\, d\Gs_{\tilde{x}}\Big| \\
 \leq & \delta\gamma\Big|\int_{\p D^c}\mathbf\nu_{\tilde{x}}\cdot \nabla_{\tdx} v(\tdx)\Big|_- \varphi_1(\tdx)d\Gs_{\tilde{x}} \Big|\\
= & \delta\gamma\Big|\int_{\p D^c} \frac{\p v}{\p \mathbf\nu_{\tilde{x}}}|_- w(\tdx)\, d\Gs_{\tilde{x}}-
\int_{\p D^c} \frac{\p w}{\p \mathbf\nu_{\tilde{x}}}\bigg |_- v(\tdx)\, d\Gs_{\tilde{x}} \Big|\\
= & \delta\gamma \Big|\int_{D^c} \Delta v w- \Delta w v\, d\Gs\Big| \\
\leq & \delta \Big(\sqrt{\alpha^2+\beta^2} \omega^2\|v\|_{L^2(D^c\setminus D^c_{1/2})} \|w\|_{L^2(D^c\setminus D^c_{1/2})}\\
&\qquad\qquad\qquad+ \gamma\|v\|_{L^2(D^c\setminus D^c_{1/2})}\|\Delta w\|_{L^2(D^c\setminus D^c_{1/2})}\Big) \\
\leq & \delta \Big(\gamma + \sqrt{\alpha^2+\beta^2} \omega^2\Big)\|v\|_{L^2(D^c\setminus D^c_{1/2})}\|w\|_{H^{2}(D)}\\
\leq & C\delta \Big(\gamma + \sqrt{\alpha^2+\beta^2}\omega^2\Big)\|v\|_{L^2(D^c\setminus D^c_{1/2})}\|\psi\|_{H^{3/2}(S^c)}.
\end{split}
\end{equation}
By change of variables in integrals, it is straightforward to verify that
\begin{equation}\label{eq:eee2}
\begin{split}
& \|v\|_{L^2(D^f\setminus D^f_{1/2})}=\delta^{-1}\|u_\delta\|_{L^2(D^f_\delta\setminus D_{\delta/2}^f)}, \\
& \|v\|_{L^2(D^c\setminus D^c_{1/2})}=\delta^{-3/2}\|u_\delta\|_{L^2(D^c_\delta\setminus D_{\delta/2}^c)}, \quad c\in \{a, b \}.
\end{split}
\end{equation}
Finally, we have from \eqref{eq:eee1} and \eqref{eq:eee2} that
\begin{equation}\label{eq:eee3}
\Big|\int_{S^f} \tilde\Phi(\tdx)\psi(\tdx)\, d\Gs_{\tilde{x}}\Big|\leq C \Big(\gamma + \sqrt{\alpha^2+\beta^2} \omega^2\Big) \|u_\delta\|_{L^2(D_\delta^f\setminus D^f_{\delta/2})}\|\psi\|_{H^{3/2}(S^f)},
\end{equation}
and for $c\in\{a, b\}$
\begin{equation}\label{eq:eee4}
\Big|\int_{S^c} \tilde\Phi(\tdx)\psi(\tdx)\, d\Gs_{\tilde{x}}\Big|\leq C\delta^{-1/2} \Big(\gamma + \sqrt{\alpha^2+\beta^2} \omega^2\Big) \|u_\delta\|_{L^2(D_\delta^c\setminus D^c_{\delta /2})}\|\psi\|_{H^{3/2}(S^c)}.
\end{equation}
\eqref{eq:eee3} and \eqref{eq:eee4} clearly imply that
$$
\|\tilde\Phi\|_{H^{-3/2}(S^f)}\leq C \Big(\gamma + \sqrt{\alpha^2+\beta^2} \omega^2\Big)\|u_\delta\|_{L^2(D_\delta^f\setminus D^f_{\delta /2})},
$$
and for $c\in\{a, b\}$
$$
\|\tilde\Phi\|_{H^{-3/2}(S^c)}\leq C\delta^{-1/2}\Big(\gamma + \sqrt{\alpha^2+\beta^2} \omega^2\Big)\|u_\delta\|_{L^2(D_\delta^c\setminus D^c_{\delta/2})},
$$
which further imply \eqref{eq:estphi1} and \eqref{eq:estphi2}.

The proof is complete.
\end{proof}

We next derive the estimate of $\|u_\delta\|_{L^2(D_\delta\setminus D_{\delta/2})}$ that is needed in \eqref{eq:estphi1} and \eqref{eq:estphi2} in Lemma~\ref{le:estiphi41}.

\begin{lem}\label{le:48}
Let $u_\delta$ be the solution to \eqnref{eq:sys1} ,
where the corresponding $\Gs_\delta$ and $q_\delta$ are given by \eqnref{eq:struc1}
and \eqnref{eq:loss1}. Then there holds
\beq\label{eq:mainest3}
\|u_\delta\|_{L^2(D_\delta\setminus D_{\delta/2})}^2 \leq  \frac{C}{\beta\omega} |u_\infty|^2 +C \delta \frac{1}{\beta\omega^2}\|\tilde{\Phi}\|_{H^{-3/2}(
S^f)} + C\delta^2 \frac{1}{\beta\omega^2}\|\tilde{\Phi}\|_{H^{-3/2}(\p D)}+ C\delta^2\frac{1}{\beta\omega^2},
\eeq
where $C$ is independent of $\omega$.
\end{lem}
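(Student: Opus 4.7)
The plan is to apply Green's identity to the Helmholtz equation in the lossy layer, reduce the resulting boundary integral to one over $\p D_\delta$ using the regularity of $q_a$, and then bring in the far field $|u_\infty|^2$ via an exterior Green's identity combined with the Sommerfeld radiation condition. First I would multiply the equation $\nabla\cdot\Gs_l\nabla u_\delta+\omega^2(\alpha+i\beta)u_\delta=0$ in $D_\delta\setminus\overline{D_{\delta/2}}$ by $\bar u_\delta$, integrate by parts, and take imaginary parts. Since $\Gs_l=\gamma B^{-2}$ is a real symmetric tensor, the Dirichlet form $\int\Gs_l\nabla u_\delta\cdot\nabla\bar u_\delta$ is real and drops out, leaving
\begin{equation*}
\omega^2\beta\|u_\delta\|^2_{L^2(D_\delta\setminus D_{\delta/2})}=-\Im\int_{\p(D_\delta\setminus D_{\delta/2})}\Gs_l\p_\nu u_\delta\,\bar u_\delta\,d\Gs.
\end{equation*}
An analogous Green's identity in $D_{\delta/2}$ (where $\nabla\cdot\Gs_a\nabla u_\delta+\omega^2 q_a u_\delta=0$), combined with the flux-transmission condition $\Gs_l\p_\nu u_\delta|_+=\Gs_a\p_\nu u_\delta|_-$ on $\p D_{\delta/2}$ and the regularity $\Im q_a\geq 0$, shows that the inner-boundary contribution is non-positive and can be discarded. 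Using flux continuity $\Gs_l\p_\nu u_\delta|_-=\p_\nu u_\delta|_+=\Phi$ on $\p D_\delta$ then reduces the problem to bounding $\bigl|\Im\int_{\p D_\delta}\Phi\,\bar u_\delta\,d\Gs\bigr|$.

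Next I would split $u_\delta=u^i+u^+$ on $\p D_\delta$ and treat the two pieces separately. For $\int\Phi\,\bar u^+$, apply Green's second identity to $u^+$ on $B_R\setminus\overline{D_\delta}$ and let $R\to\infty$: since $(\Delta+\omega^2)u^+=0$ in the exterior, the volume term vanishes, while the Sommerfeld condition \eqnref{eq:rad} gives $\int_{\p B_R}(u^+\p_r\bar u^+-\bar u^+\p_r u^+)\,d\Gs\to -2i\omega|u_\infty|^2$, interpreting $|u_\infty|^2=\int_{\mathbb{S}^2}|u_\infty(\hat\Bx,\Bd)|^2\,d\hat\Bx$. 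Using $\p_\nu u^+|_+=\Phi-\p_\nu u^i$ on $\p D_\delta$, this yields
\begin{equation*}
\Im\int_{\p D_\delta}\Phi\,\bar u^+\,d\Gs=\omega\,|u_\infty|^2+\Im\int_{\p D_\delta}\p_\nu u^i\,\bar u^+\,d\Gs,
\end{equation*}
which, after dividing by $\omega^2\beta$, is precisely how the $|u_\infty|^2/(\beta\omega)$-term in the conclusion appears. The remaining integrals $\int\Phi\bar u^i$ and $\int\p_\nu u^i\,\bar u^+$ would be controlled by Taylor-expanding $u^i$ around $\Bz_y\in\Gamma_0$ as in \eqnref{eq:exp1}, invoking the cancellation $\int_0^{2\pi}\nu\cdot\nabla_l u^i\,d\theta=0$ from \eqnref{eq:ef1} to improve the $O(1)$ leading terms to $O(\delta)$, pairing against $\Phi$ in the $H^{-3/2}(S^c)/H^{3/2}_0(S^c)$ duality already exploited in Lemma~\ref{le:estiphi41}, and using the pointwise bound \eqnref{eq:mainest1} for $u^+$ together with the blow-up change of variables $A$ of Lemma~\ref{le:2}. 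These contributions then combine to produce exactly the three remainder terms on the right-hand side of the stated inequality.

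The main obstacle will be tracking the precise $\delta$-scaling in the remainder estimates. The boundary $\p D_\delta$ is anisotropically thin around $\Gamma_0$, and the distinction between the tubular piece $S_\delta^f$ and the cap pieces $S_\delta^{a}\cup S_\delta^b$ is essential, as already reflected in the two separate bounds \eqnref{eq:estphi1}--\eqnref{eq:estphi2} of Lemma~\ref{le:estiphi41}. The cancellation \eqnref{eq:ef1} that drove the sharpness in Theorem~\ref{prop:2} must be exploited once more on $S_\delta^f$ in order to promote the leading boundary term from $O(1)$ to $O(\delta)$; without this, only a $\delta^{1/2}$-gain would be available on $S_\delta^f$, and the optimal $\delta^2$-rate of Theorem~\ref{th:main1} would degrade to $\delta$.
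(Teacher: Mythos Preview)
Your approach is essentially the paper's: an energy identity over the lossy layer (with the $\Im q_a\geq 0$ sign to discard the inner boundary), an exterior Green's identity with the Sommerfeld condition to extract the $\omega|u_\infty|^2$ contribution, and then estimation of the two cross terms $\int_{\p D_\delta}\Phi\,\bar u^i$ and $\int_{\p D_\delta}\p_\nu u^i\,\bar u^+$.

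Two details in your description do not match what the paper actually does and should be corrected. First, the cancellation \eqnref{eq:ef1} is \emph{not} used in this lemma: the $\delta$ in front of $\|\tilde\Phi\|_{H^{-3/2}(S^f)}$ comes solely from the Jacobian $d\Gs_y=\delta\,d\Gs_{\tilde y}$ on $S_\delta^f$ after Taylor expanding $u^i(\By)=u^i(\Bz_y)+\Ocal(\delta)$; no angular integration in $\theta$ is needed, and the leading term $\delta\int_{S^f}\bar u^i(\Bz_{\tilde y})\tilde\Phi(\tdy)\,d\Gs_{\tilde y}$ is bounded directly by $C\delta\|\tilde\Phi\|_{H^{-3/2}(S^f)}$. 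Second, for the term $\int_{\p D_\delta}\p_\nu u^i\,\bar u^+$, you cannot invoke \eqnref{eq:mainest1}, which holds only for $|\Bx|$ large; instead write $u^+=\Scal_{D_\delta}^\omega[\phi]$ on $\p D_\delta$ and use the near-field expansion \eqnref{eq:arg2} together with \eqnref{eq:estphi11}, which gives $\bigl|\int_{\p D_\delta}\p_\nu u^i\,\overline{\Scal_{D_\delta}^\omega[\phi]}\bigr|\leq C\delta^2(\|\tilde\Phi\|_{H^{-3/2}(\p D)}+1)$ directly.
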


\begin{proof}
By \eqnref{eq:scattering1} and the Sommerfield radiation condition
$$
\frac{\p (u_\delta-u^i)}{\p r}= i\omega (u_\delta-u^i) + \Ocal(|\Bx|^{-2}),
\quad \mbox{as} \quad r=|\Bx|\rightarrow \infty,
$$
it is straightforward to show that
\beq\label{eq:le}
\Big|\Im \lim_{R\rightarrow \infty}\int_{\p B_R}\frac{\p (u_\delta-u^i)}{\p r} (\overline{u}_\delta-\overline{u}^i)\Big|= \omega\lim_{R\rightarrow \infty}\int_{\p B_R}|u_\delta-u^i|^2,
\eeq
where $B_R$ denotes a central ball of radius $R$.
Next, noting that
$$
\Delta u^i +\omega^2 u^i= 0 \quad \mbox{in} \quad \RR^3,
$$
we multiply both sides of the equation \eqnref{eq:sys1}
in $\RR^3\setminus \overline{D}_\delta$ with $\overline{u}_\delta-\overline{u}^i$, and
in $D_\delta$ with $\overline{u}_\delta$, and integrate by parts to have
\begin{align*}
&\lim_{R\rightarrow \infty}\int_{\p B_R}\frac{\p (u_\delta-u^i)}{\p r} (\overline{u}_\delta-\overline{u}^i) -
\int_{\RR^3\setminus \overline{D}_\delta}
|\nabla (u_\delta-u^i)|^2  + \omega^2 \int_{\RR^3\setminus \overline{D}_\delta} |u_\delta-u^i|^2\\
&-\int_{\p D_\delta} \frac{\p (u_\delta-u^i)}{\p \mathbf\nu}\Big|_+ (\overline{u}_\delta-\overline{u}^i)|_+ +
\int_{\p D_\delta} \mathbf\nu\cdot \Gs_\delta \nabla u_\delta\Big|_- \overline{u}_\delta
 -\int_{D_\delta} \nabla\overline{u}_\delta\cdot \Gs_\delta\nabla u_\delta \\
&+ \omega^2\int_{D_\delta} q_\delta|u_\delta|^2=0.
\end{align*}
By taking the imaginary parts of both sides of the last equation and using the definitions of $\Gs_\delta$ and $q_\delta$ in \eqnref{eq:struc1} and \eqnref{eq:loss1}, we arrive at
\begin{align}
\Im \lim_{R\rightarrow \infty}\int_{\p B_R}\frac{\p (u_\delta-u^i)}{\p r} (\overline{u}_\delta-\overline{u}^i)
=&-\beta \omega^2 \int_{D_\delta\setminus \overline{D}_{\delta/2}} |u_\delta|^2 \label{eq:itbyparts1}\\
&-\Im\int_{\p D_\delta} \frac{\p u_\delta}{\p \mathbf\nu}\Big|_+ \overline{u}^i-\Im
\int_{\p D_\delta} \frac{\p u^i}{\p \mathbf\nu} (\overline{u}_\delta-\overline{u}^i)|_+ . \nonumber
\end{align}
We next estimate the last two integral terms in the RHS of \eqref{eq:itbyparts1}. By the definition of $\Phi$ in \eqref{eq:ppp}, one has
\begin{equation}\label{eq:ggg1}
\begin{split}
\Big|\int_{\p D_\delta} \frac{\p u_\delta}{\p \mathbf\nu}\Big|_+ \overline{u}^i\Big|
=& \Big|\int_{\p D_\delta} \Phi\overline{u}^i\Big|\leq C\delta\Big|\int_{S^f} \overline{u}^i(\Bz)
\tilde{\Phi}(\tdy)d\Gs_{\tilde{y}}\Big|+ C \delta^2 \|\tilde{\Phi}\|_{H^{-3/2}(\p D)} \\
\leq & C \delta \|\tilde{\Phi}\|_{H^{-3/2}(S^f)}+ C \delta^2 \|\tilde{\Phi}\|_{H^{-3/2}(\p D)} .
\end{split}
\end{equation}
Similarly, by \eqnref{eq:repre1}, \eqnref{eq:arg2} and \eqnref{eq:estphi11}, one has
\begin{equation}\label{eq:ggg2}
\begin{split}
& \Big|\int_{\p D_\delta} \frac{\p u^i}{\p \mathbf\nu} (\overline{u}_\delta-\overline{u}^i)|_+\Big|=
\Big|\int_{\p D_\delta} \frac{\p u^i}{\p \mathbf\nu}\overline{\Scal_{D_\delta}^\omega [\phi]}\Big|\\
\leq & C\delta^2 \|\tilde\phi\|_{H^{-3/2}(\p D)}\leq C \delta^2 (\|\tilde{\Phi}\|_{H^{-3/2}(\p D)}+1).
\end{split}
\end{equation}
Finally, by combining \eqref{eq:le}, \eqref{eq:itbyparts1}, \eqref{eq:ggg1} and \eqref{eq:ggg2}, we have
\begin{align*}
\beta \omega^2 \int_{D_\delta\setminus \overline{D}_{\delta/2}} |u_\delta|^2\leq
\omega\lim_{R\rightarrow \infty}\int_{\p B_R}|u_\delta-u^i|^2+ C \delta \|\tilde{\Phi}\|_{H^{-3/2}(S^f)}
+ C \delta^2 (\|\tilde{\Phi}\|_{H^{-3/2}(\p D)}+1),
\end{align*}
which together with the fact that
$$
\lim_{R\rightarrow \infty}\int_{\p B_R}|u_\delta-u^i|^2\leq C|u_\infty|^2
$$
readily yields \eqref{eq:mainest3}.

The proof is complete.
\end{proof}

\begin{proof}[Proof of Theorem \ref{th:main1}]
The key ingredient of the proof is to derive the following two estimates
\beq\label{eq:mainest4}
\|\tilde{\Phi}\|_{H^{-3/2}(S^f)}\leq C(\omega)\delta \quad \mbox{and} \quad \|\tilde{\Phi}\|_{H^{-3/2}(\p D)}\leq C(\omega).
\eeq
From \eqnref{eq:estphi2} one can obtain
\begin{equation}\label{eq:ggg3}
\|\tilde{\Phi}\|_{H^{-3/2}(\p D)}^2\leq C (\|\tilde{\Phi}\|_{H^{-3/2}(S^f)}^2 + \|\tilde{\Phi}\|_{H^{-3/2}(S^a\cup S^b)}^2)
\leq C(\omega) \delta^{-1} \|u_\delta\|_{L^2(D_\delta\setminus D_{\delta /2})}^2.
\end{equation}
By \eqnref{eq:scattering1} and \eqnref{eq:mainest1}, we have
\beq\label{eq:mainest5}
|u_\infty(\hat\Bx,\mathbf{d})|\leq C \Big(\delta\|\tilde{\Phi}\|_{H^{-3/2}(S^f)}+ \delta^2(\|\tilde{\Phi}\|_{H^{-3/2}(\p D)}+1)\Big),
\eeq
%If $|u_\infty(\hat\Bx,\mathbf{d})|\leq C\delta^2$ then the proof is completed, otherwise we can omit the term
%$\Ocal(\delta^2)$ for simplicity and write
%
%|u_\infty(\hat\Bx,\mathbf{d})|\leq C \Big(\delta\|\tilde{\Phi}\|_{H^{-3/2}(S^f)}+ \delta^2\|\tilde{\Phi}\|_{H^{-3/2}(\p D)}\Big)
%
and therefore
\begin{equation}\label{eq:ggg4}
|u_\infty(\hat\Bx,\mathbf{d})|^2\leq C\Big(\delta^2\|\tilde{\Phi}\|_{H^{-3/2}(S^f)}^2 +\delta^3\|\tilde{\Phi}\|_{H^{-3/2}(\p D)}^2+\delta^4\Big).
\end{equation}
Then by plugging \eqref{eq:mainest3} and \eqref{eq:ggg4} into \eqref{eq:ggg3}, one can derive
\begin{align*}
\|\tilde{\Phi}\|_{H^{-3/2}(\p D)}^2\leq & C(\omega) \delta^{-1}\Big(
|u_\infty|^2 + \delta \|\tilde{\Phi}\|_{H^{-3/2}(S^f)}+ \delta^2 \|\tilde{\Phi}\|_{H^{-3/2}(\p D)}+\delta^2\Big) \\
\leq &C(\omega)\Big(\delta\|\tilde{\Phi}\|_{H^{-3/2}(S^f)}^2 +\delta^2\|\tilde{\Phi}\|_{H^{-3/2}(\p D)}^2
+\|\tilde{\Phi}\|_{H^{-3/2}(S^f)}+ \delta \|\tilde{\Phi}\|_{H^{-3/2}(\p D)}+\delta\Big) \\
\leq & C(\omega)(\delta \|\tilde{\Phi}\|_{H^{-3/2}(\p D)}^2+ \|\tilde{\Phi}\|_{H^{-3/2}(\p D)}+\delta),
\end{align*}
which readily implies by choosing $\delta\in\mathbb{R}_+$ sufficiently small that
\begin{equation}\label{eq:ggg5}
\|\tilde{\Phi}\|_{H^{-3/2}(\p D)}\leq C(\omega).
\end{equation}
Next by \eqnref{eq:estphi1} and \eqnref{eq:mainest3}, along with the use of \eqref{eq:ggg5},  there holds
\begin{align*}
\|\tilde{\Phi}\|_{H^{-3/2}(S^f)}^2 \leq & C(\omega)\Big(
|u_\infty|^2 + \delta \|\tilde{\Phi}\|_{H^{-3/2}(S^f)}+ \delta^2 \|\tilde{\Phi}\|_{H^{-3/2}(\p D)}+\delta^2\Big)\\
\leq & C(\omega)\Big(\delta^2\|\tilde{\Phi}\|_{H^{-3/2}(S^f)}^2 + \delta\|\tilde{\Phi}\|_{H^{-3/2}(S^f)}
+ \delta^2\|\tilde{\Phi}\|_{H^{-3/2}(\p D)}+\delta^2\Big) \\
\leq &C(\omega)\Big(\delta^2\|\tilde{\Phi}\|_{H^{-3/2}(S^f)}^2 + \delta\|\tilde{\Phi}\|_{H^{-3/2}(S^f)}
+ \delta^2\Big),
\end{align*}
which in turn implies by taking $\delta\in\mathbb{R}_+$ sufficiently small that
\begin{equation}\label{eq:ggg6}
\|\tilde{\Phi}\|_{H^{-3/2}(S^f)}\leq C(\omega)\delta.
\end{equation}
Finally, by inserting \eqnref{eq:ggg5} and \eqref{eq:ggg6} into \eqnref{eq:mainest5}, we immediately have \eqref{eq:ncest1}.

The proof is complete.
\end{proof}

\begin{rem}\label{rem:general}
For our study on the regularized full-cloak in the present section, we have made use of $D_\delta=D_\delta^f\cup D_\delta^a\cup D_\delta^b$ as the virtual domain for the blowup construction. However, we would like to remark that one can also simply use $D_\delta^f$ as the virtual domain by excluding the two ``caps", $D_\delta^a$ and $D_\delta^b$. In such a case, one could replace $S_\delta^a$ and $S_\delta^b$, respectively, by $\mathscr{S}_\delta(P_0)$ and $\mathscr{S}_\delta(Q_0)$ in our earlier arguments. Then by following a similar argument, one can arrive at the same conclusion as earlier for the approximate full-cloak. The reason that we have chosen to work with $D_\delta$ instead of $D_\delta^f$ as the virtual domain are two-folded. First, we would like to include the geometry considered in \cite{LiLiuRonUhl} for the regularized full-cloaks as a particular case in our this study; see also the discussion made after \eqref{eq:jacob1}. Second, the corresponding mathematical arguments of dealing with $D_\delta$ are more general than those of dealing with $D_\delta^f$, and we appeal to presenting a more general mathematical study. 
\end{rem}

%\begin{rem}
%We make a short remark here. It is seen that in the estimation \eqnref{eq:ncest1}, the constant $C(\omega)$ is
%independent of $\Gs_a$ and $q_a$ as long as they are regular in $D_{\delta/2}$. This is important
%for cloaking arbitrary objects. In addition, we mention that the estimation \eqnref{eq:ncest1} is
%sharp (with $\Ocal(\delta^2)$). In fact, from estimation of $R_1$ and $R_2$ in \eqnref{eq:ccc1} and \eqnref{eq:sharp1} it can be found that the
%best order for far filed pattern is $\Ocal(\delta^2)$. Furthermore, it is shown numerically in \cite{LiLiuRonUhl}
%that the best order one can get is $\Ocal(\delta^2)$ for a thin object. The quantized convergence rate of far-field pattern
%in presence of small thin object can be theoretical support to the related parts in \cite{LiLiuRonUhl}.
%We also mention that the estimation \eqnref{eq:ncest1} is independent of the imping direction $\mathbf{d}$
%which ensures the possibility of designing an approximate full-cloak.
%\end{rem}

\section{Regularized partial-cloak}\label{sect:5}
In this section, we consider the regularized partial-cloak by taking the generating set $\GG_0$ to be an open subset on a flat plane $\mathbb{P}_0$ in $\mathbb{R}^3$. Without loss of generality, we assume that $\mathbb{P}_0$ is the $\{x_3=0\}$-plane. If $\GG_0$ is regarded as a domain in $\mathbb{R}^2$, it is assumed that $\GG_0$ is bounded and simply connected with a convex boundary. However, in order to ease the exposition and illustration, we shall confine ourselves to a special case by taking $\Gamma_0$ to be a square throughout the rest of the paper, which was actually considered in \cite{LiLiuRonUhl}. Nevertheless, at this point, we would like to emphasize that our subsequent study on the partial cloaking can be easily adapted to deal with a more general generating set $\Gamma_0$.  

Let $\mathbf{n}\in\mathbb{S}^2$ be the unit normal vector to $\GG_0$ and let $q\in\mathbb{R}_+$. We next introduce the virtual domain $D_\delta$ for the blowup construction of the regularized partial-cloak; see Fig.~\ref{fig2} for a schematic illustration.
\begin{figure}[h]
\begin{center}
  \includegraphics[width=3.5in,height=2.0in]{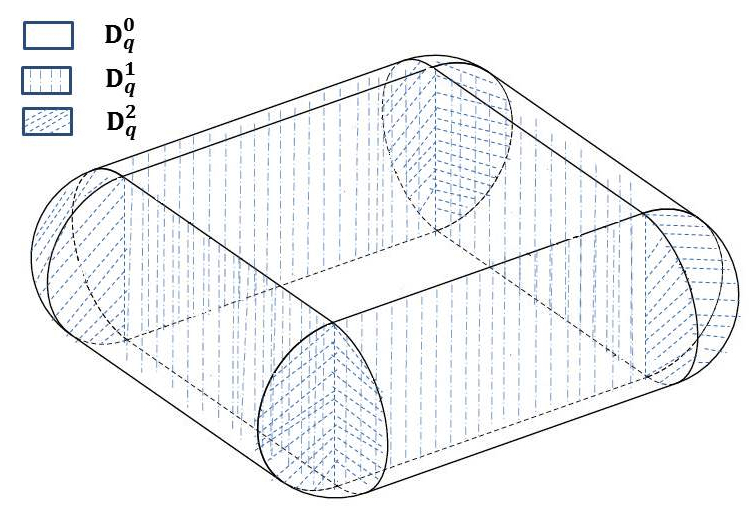}
  \end{center}
  \caption{Schematic illustration of the domain $D_q$ for the regularized partial-cloak. \label{fig2}}
\end{figure}
Let
\begin{equation}\label{eq:rrr1}
D_q^0:=\GG_0(\Bx)\times [\Bx-\tau\cdot \mathbf{n}, \Bx+\tau\cdot \mathbf{n}],\quad \Bx\in\overline{\GG}_0,\quad 0\leq \tau\leq q,
\end{equation}
where we identify $\GG_0$ with its parametric representation $\GG_0(\Bx)$. We denote by $D_q^{1}$ the union of the four side
half-cylinders and $D_q^{2}$ the union of four corner quarter-balls in Fig.~\ref{fig2}. Moreover, we set $S_q^1$ and $S_q^2$ to be
$$
S_q^1:=\p D_q \cap \p D_q^1, \quad S_q^2:=\p D_q \cap \p D_q^2.
$$
The upper and lower surfaces of $D_q$ in Fig.~\ref{fig2} are, respectively, denoted by
$$
\GG_q^1:=\{\Bx+ q\cdot \mathbf{n};\, \Bx\in \GG_0\} , \quad \GG_q^2:=\{\Bx-q\cdot \mathbf{n};\, \Bx\in \GG_0\}.
$$
Define $S_q^0:=\GG_q^1\cup \GG_q^2$.
We then have 
\begin{equation}\label{eq:vvnn1}
D_q=D_q^0\cup D_q^1\cup D_q^2\qquad\mbox{and}\qquad \partial D_q=S_q^0\cup S_q^1\cup S_q^2.
\end{equation} 
Let $\delta\in\mathbb{R}_+$ be the asymptotically small regularization parameter. We let $D_\delta$ be the virtual domain and its boundary is clearly given by
\beq\label{eq:struc51}
\p D_\delta= S_\delta^0 \cup S_\delta^1 \cup S_\delta^2.
\eeq
In what follows, $D_\delta$ shall be referred to as a {\it screen-like} region. If $q\equiv 1$, we shall drop the dependence on $q$ of $D_q, S_q^0$,
$S_q^1$ and $S_q^2$, and simply write them as $D, S^0$, $S^1$, and $S^2$.

We shall make use of the blowup transformation $A$ introduced in \cite{LiLiuRonUhl} that maps $D_\delta$ to $D$, and refer to that work for the detailed construction. For our subsequent use, it is stressed that in $D_\delta^0$ the blow-up transformation takes the following form
$$
A(\By)=\tdy:=\left(\frac{\mathbf{e}_3\mathbf{e}_3^T}{\delta}+
\mathbf{e}_1\mathbf{e}_1^T+\mathbf{e}_2\mathbf{e}_2^T\right)\By, \quad \By\in D_\delta^0,
$$
where $\By\in D_\delta$ and $\tdy\in D$, with the three Euclidean unit vectors given as follows
$$
\mathbf{e}_1=(1, 0, 0)^T,\quad \mathbf{e}_2=(0, 1, 0)^T, \quad \mathbf{e}_3=(0, 0, 1)^T.
$$
Now we introduce the lossy layer for our partial-cloaking device as follows
\beq\label{eq:loss2}
\Gs_\delta(\Bx)=\Gs_l(\Bx):=\gamma B^{-2}(\Bx), \quad q_\delta(\Bx)=q_l(\Bx): = \alpha + i \beta \quad \mbox{for} \quad \Bx\in D_\delta\setminus\overline{D_{\delta/2}},
\eeq
where $B(\Bx):=\nabla_x A(\Bx)$ is the Jacobian matrix of the blowup transformation $A$.

The following theorem quantifies our partial-cloaking construction.

\begin{thm}\label{th:main2}
Let $D_\delta$ be a screen-like region described above with its boundary given in \eqnref{eq:struc51}.
Let $u_\delta$ be the solution to \eqnref{eq:sys1} corresponding to $\Gs_\delta$ and $q_\delta$ defined in \eqnref{eq:struc1}, with $(D_\delta\backslash\overline{D_{\delta/2}}; \sigma_l, q_l)$ given by \eqnref{eq:loss2}, and $(D_{\delta/2}; \Gs_a, q_a)$ being arbitrary but regular.
Let $u^i(\Bx)=e^{i\omega\Bx\cdot \Bd}$ be the incident plane wave satisfying
\beq
|\mathbf{d}\cdot \mathbf{n}| \leq \epsilon , \quad \epsilon \ll 1,
\eeq
and let $u_\infty^\delta(\hat\Bx,\mathbf{d})$ be the scattering amplitude of $u_\delta$.
Then there exists $\delta_0\in\mathbb{R}_+$ such that when $\delta<\delta_0$,
 $u^\delta_\infty(\hat{\Bx},\mathbf{d})$ satisfies
\beq\label{eq:88}
|u^\delta_\infty(\hat{\Bx},\mathbf{d})|\leq C(\omega) (\epsilon + \delta),
\eeq
where $C(\omega)$ is a positive constant depending on $\omega$ and $D$, but independent of $\Gs_a$, $q_a$ and $\hat\Bx$, $\mathbf{d}$.
\end{thm}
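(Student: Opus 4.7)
The plan is to adapt the layer potential/energy framework developed for the full-cloak case in Theorem \ref{th:main1}. We again represent $u_\delta = u^i + \mathcal{S}^\omega_{D_\delta}[\phi]$ outside $D_\delta$, with $\phi$ satisfying \eqref{eq:trans1}. The first step is to derive the analog of Lemma \ref{le:1} for the screen geometry: we expand $(\mathcal{K}^\omega_{D_\delta})^*$ and $\mathcal{S}^\omega_{D_\delta}$ by Taylor-expanding in the normal direction around the projection $\Bz_y$ onto the flat generating surface $\Gamma_0$. Because the blowup $A$ only dilates in the $x_3$-direction on $D_\delta^0$, the Jacobian and surface measure split as $d\sigma_y = d\sigma_{\tilde y}$ on $S_\delta^0$ and $d\sigma_y = \delta\,d\sigma_{\tilde y}$ on $S_\delta^1\cup S_\delta^2$, leading to expansion formulas that are leading-order on $S_\delta^0$ and $\Ocal(\delta)$ on the side/corner pieces.

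The decisive ingredient is the tangency condition $|\mathbf{d}\cdot \mathbf{n}|\leq \epsilon$. On the flat top and bottom $S_\delta^0$ the outward normal is $\pm\mathbf{n}$, so
\beq
\mathbf\nu\cdot\nabla u^i(\Bz_y) = \pm i\omega(\mathbf{d}\cdot\mathbf{n})e^{i\omega \Bz_y\cdot\mathbf{d}} = \Ocal(\epsilon),
\eeq
which plays the role of the exact cancellation \eqref{eq:ef1} in the curve case: instead of vanishing, the normal trace of $u^i$ on $S_\delta^0$ is $\Ocal(\epsilon)$, and this $\epsilon$ propagates through the integral equation for $\phi$ to give $\tilde\phi|_{S^0} = \Ocal(\epsilon) + \Ocal(\delta(\|\tilde\Phi\|_{H^{-3/2}} + 1))$. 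Combining with the surface measure scalings, the far-field representation yields a bound of the schematic form
\beq
|u_\infty^\delta(\hat\Bx,\mathbf{d})|\leq C\Big(\epsilon + \delta\|\tilde\Phi\|_{H^{-3/2}(\p D)} + \delta\Big),
\eeq
where $\tilde\Phi = \p u_\delta/\p\mathbf\nu|_+$ in scaled coordinates.

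Next, the analogs of Lemma \ref{le:estiphi41} and Lemma \ref{le:48} are proved by the same test-function construction and Green's-identity argument: we extend a test function in $H^{3/2}_0(S^c)$ to a function $w\in H^2(D)$ that vanishes in $D_{1/2}$ and outside the relevant piece, use \eqref{eq:veq1} (which still holds since $\sigma_\delta = \gamma B^{-2}$, $q_\delta = \alpha + i\beta$ exactly as in \eqref{eq:loss2}), and then change variables to get bounds of the form $\|\tilde\Phi\|_{H^{-3/2}(S^c)}\leq C(\omega)\delta^{a_c}\|u_\delta\|_{L^2(D_\delta\setminus D_{\delta/2})}$ with appropriate exponents $a_c$ determined by the Jacobian on each piece. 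Testing the Helmholtz equation against $\overline{u}_\delta-\overline{u}^i$ outside and $\overline{u}_\delta$ inside, taking imaginary parts and exploiting the loss $\Im q_l = \beta > 0$, gives
\beq
\beta\omega^2\|u_\delta\|_{L^2(D_\delta\setminus D_{\delta/2})}^2 \leq C|u_\infty|^2 + C(\epsilon + \delta)\|\tilde\Phi\|_{H^{-3/2}(\p D)} + C(\epsilon + \delta)^2,
\eeq
where the $\epsilon$-contribution on the right comes from the boundary integrals against $u^i$ carrying a factor $\mathbf\nu\cdot\nabla u^i$. A bootstrap as in the proof of Theorem \ref{th:main1}, first closing $\|\tilde\Phi\|_{H^{-3/2}(\p D)}\leq C(\omega)$ and then refining $\|\tilde\Phi\|_{H^{-3/2}(S^0)}\leq C(\omega)(\epsilon + \delta)$, yields the desired estimate \eqref{eq:88} upon substitution back into the far-field formula.

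The main obstacle will be bookkeeping the two independent small parameters $\epsilon$ and $\delta$ simultaneously through every estimate without losing the linear scaling. In particular, one must verify that (i) the $\Ocal(\epsilon)$ residue from $\nabla u^i\cdot\mathbf\nu$ on $S_\delta^0$ does not get amplified when inverted through $I/2 + (\mathcal{K}^\omega_{D_\delta})^*$; (ii) the side/corner contributions $S_\delta^1 \cup S_\delta^2$, whose geometry is genuinely three-dimensional and whose Jacobian is not the simple diagonal matrix valid on $D_\delta^0$, produce only $\Ocal(\delta)$ and not $\Ocal(1)$ terms — this is geometrically delicate because the blowup transformation constructed in \cite{LiLiuRonUhl} is piecewise-defined and $B(\Bx)$ changes character across $D_\delta^0$, $D_\delta^1$, $D_\delta^2$. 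A secondary difficulty, compared with the full-cloak case, is that the flat screen has positive capacity, so the limiting scatterer $(\Gamma_0;\sigma,q)$ is not invisible; the argument must therefore leverage the lossy layer \eqref{eq:loss2} to absorb the interior field even though the generating set itself does not decay away.
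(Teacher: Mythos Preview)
Your overall strategy matches the paper's and is sound in outline, but one concrete step is misstated in a way that, taken literally, would prevent the bootstrap from closing. You assert $\tilde\phi|_{S^0} = \Ocal(\epsilon) + \Ocal(\delta(\|\tilde\Phi\|_{H^{-3/2}}+1))$ and hence $|u_\infty| \leq C(\epsilon + \delta\|\tilde\Phi\| + \delta)$. This is not correct: from $(\tfrac{I}{2} + (\Kcal_{D_\delta}^\omega)^*)[\phi] = \Phi - \partial_\nu u^i$ together with Lemma~\ref{le:5.1}, the density $\tilde\phi$ on $S^0$ carries $\tilde\Phi|_{S^0}$ at \emph{leading} order (no $\delta$ in front), so the far-field bound must read
\[
|u_\infty^\delta| \leq C\Big(\|\tilde\Phi\|_{H^{-3/2}(S^0)} + \epsilon + \delta + \delta\|\tilde\Phi\|_{H^{-3/2}(S^1)} + \delta^2\|\tilde\Phi\|_{H^{-3/2}(S^2)}\Big),
\]
which is exactly the paper's \eqref{eq:est51}. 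Similarly, in the energy identity the boundary term $\int_{\partial D_\delta}\Phi\,\overline{u^i}$ contains $u^i$, not $\partial_\nu u^i$, so it contributes $\|\tilde\Phi\|_{H^{-3/2}(S^0)}$ without an $\epsilon$ factor; your stated inequality $\beta\omega^2\|u_\delta\|^2 \leq C|u_\infty|^2 + C(\epsilon+\delta)\|\tilde\Phi\| + C(\epsilon+\delta)^2$ is therefore too optimistic on both counts.

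What actually closes the loop is the specific value of your ``appropriate exponent $a_c$'' on $S^0$. Because the blowup on $D_\delta^0$ dilates only the $x_3$-direction, the volume scaling gives $\|v\|_{L^2(D^0\setminus D^0_{1/2})} = \delta^{-1/2}\|u_\delta\|_{L^2(D_\delta^0\setminus D_{\delta/2}^0)}$, and the test-function argument of Lemma~\ref{le:estiphi41} then yields
\[
\|\tilde\Phi\|_{H^{-3/2}(S^j)} \leq C\,\delta^{-(j-1)/2}\|u_\delta\|_{L^2(D_\delta\setminus D_{\delta/2})},\qquad j=0,1,2,
\]
so in particular $\|\tilde\Phi\|_{H^{-3/2}(S^0)} \leq C\delta^{1/2}\|u_\delta\|_{L^2}$. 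This $\delta^{1/2}$ gain on $S^0$ is the true analog of the cancellation \eqref{eq:ef1} in the curve case; plugging it into the corrected far-field and energy bounds gives $|u_\infty| \leq C(\delta^{1/2}\|u_\delta\|_{L^2} + \epsilon + \delta)$ and then $\|u_\delta\|_{L^2} \leq C(\epsilon^{1/2}+\delta^{1/2})$, from which \eqref{eq:88} follows. Your final two-stage bootstrap description is consistent with this, but the intermediate formulas you wrote do not produce it; make the $\|\tilde\Phi\|_{H^{-3/2}(S^0)}$ term and its $\delta^{1/2}$ control explicit. (Minor: $d\sigma_y = \delta^2\,d\sigma_{\tilde y}$ on $S_\delta^2$, not $\delta$.)
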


\begin{rem}\label{lem:physical12}
Similar to Remark~\ref{lem:physical1}, one can immediately infer by Theorem~\ref{th:main2} that the push-forwarded structure
$$\{\Omega_e; \Gs, q\}=(F_\delta)_*\{\Omega; \Gs_\delta, q_\delta\}$$
with $F_\delta$ defined by \eqnref{eq:blow1} and $\{\Omega_e; \Gs_\delta, q_\delta\}$ defined in the theorem, produces an approximate partial cloaking device. The incident aperture of the approximate partial-cloak is given by
\[
\Lambda_i=\{\Bd\in\mathbb{S}^2;\ |\Bd\cdot \Bn|\leq \epsilon\},
\]
whereas the observation aperture is $\Lambda_o=\mathbb{S}^2$.
\end{rem}

\subsection{Asymptotic expansions}
Similar to our notational usage in Section~\ref{sect:4}, if we let $\Bz$ denote the space variable on $\Gamma_0$, then for any $\By\in \p D_\delta$, we define
$\mathbf{z}_{y}$ to be the projection of $\By$ onto $\GG_0$. Denote by $\nabla_n$ the normal derivative with respect to $\GG_0$.
Then for $\Bx\in \RR^3\setminus \overline{D}_\delta$ we have
\begin{align}
\Scal_{D_\delta}^\omega[\phi](\Bx)= & \int_{\p D_\delta} G_\omega(\Bx-\By) \phi(\By) d\Gs_y \nonumber\\
= & \int_{S^0} G_\omega(\Bx-\mathbf{z}_{\tilde{y}})\tilde\phi(\tdy)d\Gs_{\tilde{y}}+\delta \int_{S^1} G_\omega(\Bx-\mathbf{z}_{\tilde{y}}) \tilde\phi(\tdy)d\Gs_{\tilde{y}} \label{eq:expan51}\\
& + \delta \int_{S^0} (\tdy-\mathbf{z}_{\tilde{y}})\cdot\nabla_{n_y} G_\omega(\Bx-\mathbf{z}_{\tilde{y}})\tilde\phi(\tdy)d\Gs_{\tilde{y}}+ \Ocal\Big(\delta^2|\Bx|^{-1}\|\tilde\phi\|_{H^{-3/2}(\p D)}\Big),\nonumber
\end{align}
where and also in what follows, $\tilde\phi(\tdy):=\phi(\By)$ and $\tdy:= A(\By)\in \overline{D}$ for $\By\in \overline{D_\delta}$.

We next estimate $\tilde\phi(\tdy)$ in \eqref{eq:expan51}, and first derive the following lemma.
\begin{lem}\label{le:5.1}
Let $D_\delta$ be a screen-like region described earlier. Then for $\Bx\in \p D_\delta$ we have
\beq\label{eq:arg3}
\begin{split}
(\Kcal_{D_\delta}^\omega)^*[\phi](\Bx)=&
\left\{
\begin{array}{ll}
 (\Kcal_{S^0}^\omega)^*[\tilde\phi](\tdx)+ \Ocal(\delta\|\tilde\phi\|_{H^{-3/2}(\p D)}), & \tdx \in S^0\cup S^1, \\
 (\Kcal_{S^0}^\omega)^*[\tilde\phi](\tdx)+(\Kcal_{S^2})^*[\tilde\phi](\tdx)+ \Ocal(\delta\|\tilde\phi\|_{H^{-3/2}(\p D)}), & \tdx \in S^2,
 \end{array}
 \right.\\
\Scal_{D_\delta}^\omega[\phi](\Bx)=&
\left\{
\begin{array}{ll}
\Scal_{S^0}^\omega[\tilde\phi](\tdx)+ \Ocal(\delta\|\tilde\phi\|_{H^{-3/2}(\p D)}), & \tdx \in S^0\cup S^1, \\
\Scal_{S^0}^\omega[\tilde\phi](\tdx)+\Scal_{S^2}[\tilde\phi](\tdx)+ \Ocal(\delta\|\tilde\phi\|_{H^{-3/2}(\p D)}), & \tdx \in S^2,
 \end{array}
 \right.
\end{split}
\eeq
where $ (\Kcal_{S^0}^\omega)^*[\tilde\phi](\tdx)$ is defined by
$$
(\Kcal_{S^0}^\omega)^*[\tilde\phi](\tdx)=\frac{1}{4\pi}\int_{S^0}\Big(\frac{\la \mathbf{z}_{\tilde{x}}-\mathbf{z}_{\tilde{y}},\mathbf\nu_x\ra}{|\mathbf{z}_{\tilde{x}}-\mathbf{z}_{\tilde{y}}|^3}
-i\omega\frac{\la \mathbf{z}_{\tilde{x}}-\mathbf{z}_{\tilde{y}},\mathbf\nu_x\ra}{|\mathbf{z}_{\tilde{x}}-\mathbf{z}_{\tilde{y}}|^2}\Big)
e^{i\omega|\mathbf{z}_{\tilde{x}}-\mathbf{z}_{\tilde{y}}|}\tilde\phi(\tdy)d\Gs_{\tilde{y}},
$$
and
$$
\Scal_{S^0}^\omega[\tilde\phi](\tdx)=\int_{S^0}G_\omega{(\mathbf{z}_{\tilde{x}}-\mathbf{z}_{\tilde{y}})}
\tilde\phi(\tdy)d\Gs_{\tilde{y}}.
$$
\end{lem}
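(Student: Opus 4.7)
The plan is to mimic the proof of Lemma \ref{le:1} from Section~\ref{sect:4}, decomposing the boundary integral according to the three pieces $\partial D_\delta = S_\delta^0 \cup S_\delta^1 \cup S_\delta^2$ (from \eqref{eq:vvnn1}), changing variables via the blow-up $A$ in each piece, and Taylor expanding the kernels around the projections $\mathbf{z}_y\in\Gamma_0$. The first preparatory step is to record the scaling of the surface measures under $A^{-1}$: since $A$ acts by the factor $1/\delta$ only in the direction normal to $\Gamma_0$ on $D_\delta^0$, by $1/\delta$ in the two directions transverse to the edge on $D_\delta^1$, and isotropically by $1/\delta$ on $D_\delta^2$, one obtains $d\sigma_y = d\sigma_{\tilde y}$ on $S_\delta^0$, $d\sigma_y = \delta\, d\sigma_{\tilde y}$ on $S_\delta^1$, and $d\sigma_y = \delta^2\, d\sigma_{\tilde y}$ on $S_\delta^2$. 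In addition, since $\mathbf{y}-\mathbf{z}_y$ is $O(\delta)$ everywhere on $\partial D_\delta$, one has the expansions
\[
|\mathbf{x}-\mathbf{y}| = |\mathbf{z}_{\tilde x}-\mathbf{z}_{\tilde y}| + O(\delta),\quad \langle\mathbf{x}-\mathbf{y},\nu_x\rangle = \langle \mathbf{z}_{\tilde x}-\mathbf{z}_{\tilde y},\nu_x\rangle + O(\delta),\quad e^{i\omega|\mathbf{x}-\mathbf{y}|} = e^{i\omega|\mathbf{z}_{\tilde x}-\mathbf{z}_{\tilde y}|} + O(\delta)
\]
whenever $\mathbf{z}_{\tilde x}\neq \mathbf{z}_{\tilde y}$, exactly as in the curve-case argument.

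The integral over $S_\delta^0$ then produces the leading $(\mathcal{K}_{S^0}^\omega)^*[\tilde\phi](\tilde\mathbf{x})$ after the change of variables, because $\mathbf{z}_y$ ranges over $\Gamma_0$ and the Jacobian is unity. The integrals over $S_\delta^1$ and $S_\delta^2$ carry the explicit prefactors $\delta$ and $\delta^2$; as long as $\mathbf{z}_{\tilde x}$ is separated from the corner projections (or, for $\tilde\mathbf{x}\in S^0\cup S^1$, as long as $\tilde\mathbf{x}$ is not close to $S^2$), the kernel is $O(1)$ against an $H^{-3/2}$ density and these contributions collapse into the $O(\delta\|\tilde\phi\|_{H^{-3/2}(\partial D)})$ remainder. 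The genuinely new and delicate case is $\tilde\mathbf{x}\in S^2$ with $\tilde\mathbf{y}\in S_\delta^2$ in the \emph{same} corner: there $\mathbf{z}_x=\mathbf{z}_y$ is the corner point of $\Gamma_0$, so $|\mathbf{x}-\mathbf{y}|=\delta|\tilde\mathbf{x}-\tilde\mathbf{y}|$ and $\langle \mathbf{x}-\mathbf{y},\nu_x\rangle = \delta\langle\tilde\mathbf{x}-\tilde\mathbf{y},\nu_{\tilde x}\rangle$, so the kernel rescales as
\[
\frac{\langle\mathbf{x}-\mathbf{y},\nu_x\rangle}{|\mathbf{x}-\mathbf{y}|^3} = \frac{1}{\delta^2}\,\frac{\langle\tilde\mathbf{x}-\tilde\mathbf{y},\nu_{\tilde x}\rangle}{|\tilde\mathbf{x}-\tilde\mathbf{y}|^3},
\]
and the $\delta^{-2}$ is exactly cancelled by $d\sigma_y=\delta^2\,d\sigma_{\tilde y}$. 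Because the exponential factor is $1+O(\delta)$ and the $-i\omega$ term contributes $O(\delta)$ on this corner, the static kernel survives and yields the additional summand $(\mathcal{K}_{S^2})^*[\tilde\phi](\tilde\mathbf{x})$ that appears only in the $S^2$ case. The analysis of $\mathcal{S}_{D_\delta}^\omega$ is entirely parallel: on $S_\delta^0$ and $S_\delta^1$ the single-layer kernel produces the leading $\mathcal{S}_{S^0}^\omega[\tilde\phi](\tilde\mathbf{x})$ plus $O(\delta)$, and on $S_\delta^2$-to-$S_\delta^2$ within a common corner the rescaling $|\mathbf{x}-\mathbf{y}|^{-1}\cdot\delta^2 = \delta$ of the kernel-measure pairing leaves the static part $\mathcal{S}_{S^2}[\tilde\phi](\tilde\mathbf{x})$ intact while the $e^{i\omega|\mathbf{x}-\mathbf{y}|}-1$ deviation contributes only $O(\delta)$.

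The main technical obstacle I anticipate is the cross-piece interaction between $\tilde\mathbf{x}\in S^0\cup S^1$ near the edge/corner and $\tilde\mathbf{y}\in S_\delta^2$: the kernel $|\mathbf{x}-\mathbf{y}|^{-2}$ can grow as $\tilde\mathbf{x}$ approaches the corner, and one must show that the $\delta^2$ prefactor from $d\sigma_y$ still suffices to produce the $O(\delta\|\tilde\phi\|_{H^{-3/2}(\partial D)})$ remainder. This is handled by pairing against test functions in $H^{3/2}_0$ on the individual pieces and invoking the mapping properties of layer potentials on Lipschitz domains (as in \cite{Mcl,nedelec}) to trade one power of $\delta$ for regularity; the remaining $\delta$ goes into the error. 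Once this off-diagonal control is in place, the two expansions in \eqref{eq:arg3} follow by summing the contributions from $S_\delta^0$, $S_\delta^1$, $S_\delta^2$ and collecting the error terms.
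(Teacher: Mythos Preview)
Your approach is essentially the same as the paper's: decompose $\partial D_\delta$ into $S_\delta^0\cup S_\delta^1\cup S_\delta^2$, record the surface-measure scalings, Taylor-expand the kernels about the projections $\mathbf{z}_y\in\Gamma_0$, and treat the $S_\delta^2$--to--$S_\delta^2$ piece by exact rescaling $|\mathbf{x}-\mathbf{y}|=\delta|\tilde{\mathbf{x}}-\tilde{\mathbf{y}}|$. The paper's proof is in fact only a sketch that defers to the method of Lemma~\ref{le:1}, so your write-up is, if anything, more explicit---in particular your identification of the cross-piece interaction (e.g.\ $\tilde{\mathbf{x}}\in S^0\cup S^1$ near a corner against $\tilde{\mathbf{y}}\in S_\delta^2$) is a technical point the paper simply passes over.

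One minor bookkeeping issue to flag: your own computation for the single-layer $S_\delta^2$-to-$S_\delta^2$ block gives $|\mathbf{x}-\mathbf{y}|^{-1}\,d\sigma_y = \delta\,|\tilde{\mathbf{x}}-\tilde{\mathbf{y}}|^{-1}\,d\sigma_{\tilde y}$, so that contribution is $\delta\,\Scal_{S^2}[\tilde\phi](\tilde{\mathbf{x}})+\mathcal{O}(\delta^2)$, not $\Scal_{S^2}[\tilde\phi](\tilde{\mathbf{x}})$ at order one. This is consistent with the analogous line in Lemma~\ref{le:1} (where the whole expression carries an overall $\delta$), and it means that for the single-layer the extra $\Scal_{S^2}$ summand in the stated formula is already $\mathcal{O}(\delta\|\tilde\phi\|_{H^{-3/2}(\partial D)})$ and could be absorbed into the remainder; only for $(\Kcal_{D_\delta}^\omega)^*$ does the corner term $(\Kcal_{S^2})^*[\tilde\phi]$ genuinely survive at order one, because there the kernel scales as $\delta^{-2}$ and exactly cancels the Jacobian. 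Your scaling analysis is correct; just be careful when matching it to the displayed form of \eqref{eq:arg3}.
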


\begin{proof}
By following a similar argument to that in the proof of Lemma \ref{le:1}, we compute for $\Bx\in S_\delta^0\cup S_\delta^1$
$$
|\Bx-\By|=|\Bx-\mathbf{z}_x-(\By-\mathbf{z}_y)+ \mathbf{z}_x-\mathbf{z}_y|=|\mathbf{z}_{\tilde{x}}-\mathbf{z}_{\tilde{y}}|+ \Ocal(\delta),
$$
and
$$
\la \Bx-\By, \mathbf{\nu}_{x}\ra= \la\mathbf{z}_x-\mathbf{z}_y, \mathbf{\nu}_{\tilde{x}}\ra+\Ocal(\delta),
\quad e^{i\omega|\Bx-\By|}= e^{i\omega|\mathbf{z}_{\tilde{x}}-\mathbf{z}_{\tilde{y}}|}+\Ocal(\delta).
$$
Note that for $\Bx\in S_\delta^2$ one has
$$
|\Bx-\By|=\delta|\tdx-\tdy|, \quad e^{i\omega|\Bx-\By|}=1+ \Ocal(\delta).
$$
The proof can then be completed by using the similar expansion method as that in the proof of Lemma \ref{le:1}.
\end{proof}

Next we introduce an auxiliary scattering problem for our partial-cloaking study.  Let $v_\delta$ be the solution to the following scattering system
\beq\label{eq:sndhd1}
\left\{
\begin{array}{l}
\Delta v_\delta + \omega^2 v_\delta =0  \quad \mbox{in} \quad \RR^3\setminus \overline{D_\delta}\medskip\\
\displaystyle{\frac{\p v_\delta}{\p \mathbf\nu}=0}  \ \qquad\quad\quad\mbox{on} \quad \p D_\delta\medskip\\
v^+:=v_\delta- u^i \quad \mbox{satisfies the radiation condition } \eqnref{eq:rad}
\end{array}
\right.
\eeq
Then we have
\begin{thm}
Let $u_\delta$ be the solution to \eqnref{eq:sndhd1}. Then there holds
the following far-field expansion
\beq\label{eq:expan53}
(v_\delta-u^i)(\Bx)= \int_{\GG_0}G_\omega(\Bx-\mathbf{z}_{\tilde{y}})\mathrm{K}[\mathbf{n}\cdot\nabla u^i(\mathbf{z})](\Bz_{\tilde y})\, d\Gs_{z_{\tilde{y}}}+\Ocal(\delta),
\eeq
where
\beq\label{eq:op511}
\mathrm{K}:=\left(\frac{I}{4}-(\Kcal_{\GG_0}^\omega)^*\right)^{-1}(\Kcal_{\GG_0}^\omega)^*
\left(\frac{I}{4}+(\Kcal_{\GG_0}^\omega)^*\right)^{-1}
\eeq
with the operator $(\Kcal_{\GG_0}^{\omega})^*$ defined by
\beq\label{eq:op512}
(\Kcal_{\GG_0}^{\omega})^*[\varphi]:=\rm{p.v. } \int_{\GG_0} \mathbf{n}\cdot\nabla_z G_\omega(\Bz-\By) \phi(\By)\, d\Gs_z  .
\eeq
\end{thm}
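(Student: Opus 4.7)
My plan is to use the single-layer representation from Section \ref{sect:3}: write
\[
v_\delta - u^i = \Scal_{D_\delta}^\omega[\phi] \quad\text{in } \RR^3 \setminus \overline{D_\delta},
\]
so that the sound-hard boundary condition in \eqref{eq:sndhd1} and the trace formula \eqref{eq:trace} reduce the problem to the boundary integral equation
\[
\Big(\tfrac{I}{2} + (\Kcal_{D_\delta}^\omega)^*\Big)[\phi] = -\p_\nu u^i \quad \text{on } \p D_\delta = S_\delta^0 \cup S_\delta^1 \cup S_\delta^2 .
\]
Since the surface measures of $S_\delta^1, S_\delta^2$ scale as $\delta$ and $\delta^2$ respectively, their contributions to $\Scal_{D_\delta}^\omega[\phi]$ are absorbed into the $\Ocal(\delta)$ remainder. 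The task is then to identify the leading density $\phi$ on the dominant piece $S_\delta^0 = \GG_\delta^1 \cup \GG_\delta^2$ and plug it into the thin-layer expansion \eqref{eq:expan51}.

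The asymptotic analysis on $S_\delta^0$ is the core of the argument. I would identify $\phi|_{\GG_\delta^{1,2}}$ with functions $\phi_\pm$ on $\GG_0$ via vertical projection, and use the reflection symmetry of $D_\delta$ across $\{x_3=0\}$ to decompose into symmetric/antisymmetric components $\phi_{s,a} := \tfrac{1}{2}(\phi_+\pm\phi_-)$. On $S_\delta^0$ the Neumann data flips sign between the two faces, so the forcing $-\p_\nu u^i$ is purely antisymmetric to leading order, equal to $\mp \mathbf{n}\cdot\nabla u^i$. Refining Lemma \ref{le:5.1} on this surface: the same-side contribution in $(\Kcal_{D_\delta}^\omega)^*$ gives the kernel $\la \mathbf{z}_{\tilde{x}}-\mathbf{z}_{\tilde{y}},\nu_x\ra/|\mathbf{z}_{\tilde{x}}-\mathbf{z}_{\tilde{y}}|^3$, which vanishes identically by the flatness of $\GG_0$, while the cross-side contribution, thanks to the $2\delta\mathbf{n}$ normal displacement between $\GG_\delta^1$ and $\GG_\delta^2$, reduces to a Poisson-type approximation to the identity whose next-order correction delivers precisely the operator $(\Kcal_{\GG_0}^\omega)^*$ from \eqref{eq:op512}.

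Collecting these expansions, one arrives at a triangular (cascade) system for $\phi_a$ and $\phi_s$ on $\GG_0$ of the form
\[
\Big(\tfrac{I}{4} + (\Kcal_{\GG_0}^\omega)^*\Big)\phi_a = -\mathbf{n}\cdot\nabla u^i + \Ocal(\delta), \qquad \Big(\tfrac{I}{4} - (\Kcal_{\GG_0}^\omega)^*\Big)\phi_s = (\Kcal_{\GG_0}^\omega)^*\phi_a + \Ocal(\delta).
\]
Solving the cascade yields $\phi_s = \mathrm{K}[\mathbf{n}\cdot\nabla u^i] + \Ocal(\delta)$ with $\mathrm{K}$ as in \eqref{eq:op511}, while $\phi_a$ contributes only to the difference $\phi_+ - \phi_-$ that cancels at leading order in the single-layer. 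Indeed, substituting $\phi$ back into \eqref{eq:expan51} and expanding $G_\omega(\Bx-\By)$ about $G_\omega(\Bx-\mathbf{z}_{\tilde y})$, the two face integrals over $\GG_\delta^1, \GG_\delta^2$ collapse to a single integral over $\GG_0$ against the sum $\phi_+ + \phi_- = 2\phi_s$, yielding \eqref{eq:expan53} after absorbing the numerical factor into $\mathrm{K}$.

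The main obstacle is rigorously extracting the $(\Kcal_{\GG_0}^\omega)^*$ correction from the cross-side asymptotics in a norm suited to the open flat piece $\GG_0$: since $\GG_0$ is not a closed surface the densities develop edge singularities on $\p\GG_0$ that interact with the $\Ocal(\delta)$ contributions of $S_\delta^1, S_\delta^2$, and the operators $\tfrac{I}{4} \pm (\Kcal_{\GG_0}^\omega)^*$ appearing in $\mathrm{K}$ are not directly covered by the classical closed-surface invertibility results. Both points I would handle by reflecting $D_\delta$ across $\{x_3 = 0\}$, which reduces the screen problem to half-space Neumann/Dirichlet problems amenable to Fredholm theory and uniqueness at the fixed frequency $\omega$.
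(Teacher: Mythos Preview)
Your architecture matches the paper's through the single-layer representation and the boundary equation $\bigl(\tfrac{I}{2}+(\Kcal_{D_\delta}^\omega)^*\bigr)[\phi]=-\partial_\nu u^i$; the divergence is in how you resolve the density on $S_\delta^0$. The paper does \emph{not} attempt a Poisson-kernel expansion or a symmetric/antisymmetric cascade. It simply invokes Lemma~\ref{le:5.1} to write $\tilde\phi=-\bigl(\tfrac{I}{2}+(\Kcal_{S^0}^\omega)^*\bigr)^{-1}[\nu\cdot\nabla u^i]+\Ocal(\delta)$ on all of $\partial D$, and then uses the reflection $\tdy\mapsto\tdy-2\mathbf{n}$: since this swaps $\GG^1\leftrightarrow\GG^2$ and flips the outward normal, it anticommutes with $(\Kcal_{S^0}^\omega)^*$, so the opposite-face density satisfies $\tilde\psi=\bigl(\tfrac{I}{2}-(\Kcal_{S^0}^\omega)^*\bigr)^{-1}[\mathbf{n}\cdot\nabla u^i]+\Ocal(\delta)$. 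Substituting $\tilde\phi+\tilde\psi$ into \eqref{eq:expan51} and applying the resolvent identity $(\tfrac{I}{2}-K)^{-1}-(\tfrac{I}{2}+K)^{-1}=2(\tfrac{I}{2}-K)^{-1}K(\tfrac{I}{2}+K)^{-1}$ produces $\hat{\mathrm{K}}$ on $S^0$; the $\tfrac{I}{4}\pm(\Kcal_{\GG_0}^\omega)^*$ form in \eqref{eq:op511} is then a pure bookkeeping step---each $S^0$ integral is two copies of a $\GG_0$ integral, yielding $(\Kcal_{\GG_0}^\omega)^*=\tfrac{1}{2}(\Kcal_{S^0}^\omega)^*$---not a further asymptotic computation. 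The only invertibility invoked is that of $\tfrac{I}{2}\pm(\Kcal_{S^0}^\omega)^*$, taken as standard.

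Your cascade route has a genuine gap. The cross-face kernel $\tfrac{1}{4\pi}\cdot 2\delta\cdot\bigl(|\Bz_x-\Bz_y|^2+4\delta^2\bigr)^{-3/2}$ is indeed a Poisson kernel, but its total mass is $\tfrac{1}{2}$; combined with the $\tfrac{I}{2}$ already present from the trace formula, the leading $(\phi_s,\phi_a)$ system on $S_\delta^0$ is \emph{degenerate} (top and bottom equations coincide up to a sign in the forcing), so $\phi_a$ cannot be read off first from a triangular system as you propose. Moreover, the ``next-order correction'' you identify with $(\Kcal_{\GG_0}^\omega)^*$ cannot be that operator: by \eqref{eq:op512} its kernel is proportional to $\mathbf{n}\cdot(\Bz-\By)$, which vanishes identically for $\Bz,\By\in\GG_0$. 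The $\tfrac{I}{4}\pm(\Kcal_{\GG_0}^\omega)^*$ structure in $\mathrm{K}$ does not arise from Poisson asymptotics; in the paper it appears only from the algebraic halving $S^0\to\GG_0$ after the $S^0$-level resolvent identity has already been applied. If you wish to keep a symmetric/antisymmetric organization, work with $(\Kcal_{S^0}^\omega)^*$ from Lemma~\ref{le:5.1} and exploit its reflection parity directly---that is effectively the paper's argument.
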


\begin{proof}

The solution to \eqnref{eq:sndhd1} can be represented by \eqnref{eq:repre1},
where the potential $\phi$ satisfies
\beq\label{eq:trans2}
\Big(\frac{I}{2} + (\Kcal_{D_\delta}^\omega)^*\Big)[\phi]=-\frac{\p u^i}{\p \mathbf\nu} \quad \mbox{on} \quad  \p D_\delta.
\eeq
Using the fact that
$$
\frac{I}{2}+(\Kcal_{S^0}^\omega)^* : H^{-3/2}(S^0)\rightarrow H^{-3/2}(S^0)
$$
is invertible (cf. \cite{nedelec,Tay}), and by \eqnref{eq:trans2} one has
\begin{equation}\label{eq:111}
\tilde{\phi}(\tdy)= -\Big(\frac{I}{2}+(\Kcal_{S^0}^\omega)^*\Big)^{-1}\left[\frac{\p u^i}{\p \mathbf\nu}\right](\tdy) + \Ocal(\delta), \quad
\tdy \in \p D.
\end{equation}
Next we can expand $u^i$ with respect to $\mathbf{z}$ as follows
\begin{equation}\label{eq:222}
u^i(\By)= u^i(\mathbf{z}_y)+ \delta \nabla u^i(\mathbf{z}_y)\cdot (\tdy-\mathbf{z}_y) + \Ocal(\delta^2), \quad \By\in \p D_\delta.
\end{equation}
By \eqref{eq:111} and \eqref{eq:222}, one further has
\beq\label{eq:expan52}
\tilde{\phi}(\tdy)= -\Big(\frac{I}{2}+(\Kcal_{S^0}^\omega)^*\Big)^{-1}\Big[\mathbf\nu_{\tilde{x}}\cdot\nabla u^i(\mathbf{z}_{\tilde{x}})\Big](\tdy) + \Ocal(\delta), \quad \tdy\in \GG^1,
\eeq
and hence $\|\tilde\phi\|_{H^{-3/2}(\p D)}\leq C(\omega)$.
Noting that for $\tdy\in \GG^1$, $(\tdy-2\mathbf{n})\in \GG^2$, we define
$\tilde\psi(\tdy):=\tilde\phi(\tdy-2\mathbf{n})$ for $\tdy\in \GG^1$ and
$\tilde\psi(\tdy):=\tilde\phi (\tdy+2\mathbf{n})$ for $\tdy\in \GG^2$. By using the fact that
$$
\mathbf{\nu}_{\tilde{y}-2n_{\tilde{y}}}= - \mathbf{\nu}_{\tilde{y}}=-\mathbf{n} , \quad \mbox{for} \quad \tdy\in \GG^1,
$$
one obtains (assume for a while that $\tdx\in S^0$)
\beq\label{eq:expan522}
\tilde{\phi}(\tdy-2\mathbf{n})=\tilde\psi(\tdy)=\Big(\frac{I}{2}-(\Kcal_{S^0}^\omega)^*\Big)^{-1}\Big[\mathbf{n}\cdot\nabla u^i(\mathbf{z}_{\tilde{x}})\Big](\tdy) + \Ocal(\delta), \quad \tdy\in \GG^1.
\eeq
By inserting \eqnref{eq:expan52} and \eqnref{eq:expan522} into \eqnref{eq:expan51}, there holds for $\Bx\in \RR^3\setminus \overline{D}$
\begin{align*}
&\Scal_{D_\delta}^\omega[\phi](\Bx)=\int_{S^0} G_\omega(\Bx-\mathbf{z}_{\tilde{y}})\tilde\phi(\tdy)d\Gs_{\tilde{y}}+ \Ocal(\delta) \\
=& \int_{\GG^1} G_\omega(\Bx-\mathbf{z}_{\tilde{y}})\tilde\phi(\tdy)d\Gs_{\tilde{y}}+
\int_{\GG^1} G_\omega(\Bx-\mathbf{z}_{\tilde{y}})\tilde\psi(\tdy)d\Gs_{\tilde{y}}+ \Ocal(\delta)\\
=& \int_{\GG_0}G_\omega(\Bx-\mathbf{z}_{\tilde{y}})\left(\Big(\frac{I}{2}-(\Kcal_{S^0}^\omega)^*\Big)^{-1}
-\Big(\frac{I}{2}+(\Kcal_{S^0}^\omega)^*\Big)^{-1}
\right)\Big[\p_n u^i(\mathbf{z})\Big](\Bz_{\tilde{y}})d\Gs_{z_{\tilde{y}}}\\
& +\Ocal(\delta)=\int_{\GG_0}G_\omega(\Bx-\mathbf{z}_{\tilde{y}})\hat{\mathrm{K}}[\p_n u^i(\mathbf{z})](\Bz_{\tilde{y}})d\Gs_{z_{\tilde{y}}}+\Ocal(\delta),
\end{align*}
where $\p_n u^i(\mathbf{z})=\mathbf{n}\cdot\nabla u^i(\mathbf{z})$ and $\hat{\mathrm{K}}$ is defined by
\beq\label{eq:op51}
\hat{\mathrm{K}}:=2\left(\frac{I}{2}-(\Kcal_{S^0}^\omega)^*\right)^{-1}(\Kcal_{S^0}^\omega)^*
\left(\frac{I}{2}+(\Kcal_{S^0}^\omega)^*\right)^{-1}.
\eeq
It is noted in \eqref{eq:expan53} that the function $\Theta(\Bz):=\mathbf{n}\cdot\nabla u^i(\mathbf{z})$ depends only on $\Bz\in \GG_0$, one can readily verify that
$$
(\Kcal_{\GG_0}^\omega)^*[\Theta](\Bz)=\frac{1}{2}(\Kcal_{S^0}^\omega)^*[\hat\Theta(\tdy)](\Bz+\mathbf{n}), \quad \Bz\in \GG_0,
$$
for $\hat\Theta(\tdy)=\mathbf{n}\cdot\nabla u^i(\mathbf{z}_{\tilde{y}})$, $\tdy\in \GG_1\cup \GG_2$.
 Hence, we can replace the operator $\hat{\mathrm{K}}$ in \eqnref{eq:op51} to be
$$
\mathrm{K}:=\left(\frac{I}{4}-(\Kcal_{\GG_0}^\omega)^*\right)^{-1}(\Kcal_{\GG_0}^\omega)^*
\left(\frac{I}{4}+(\Kcal_{\GG_0}^\omega)^*\right)^{-1}.
$$
The proof is complete.
\end{proof}

\begin{thm}\label{th:pc1}
Let $v_\delta$ be the solution to \eqnref{eq:sndhd1} and
$v^\delta_\infty(\hat{\Bx}, \mathbf{d})$ be its scattering amplitude. Then there holds
\beq\label{eq:pcscatamp1}
\Big|v^\delta_\infty(\hat{\Bx}, \mathbf{d})+\frac{1}{2\pi}\int_{\Gamma_0}e^{-i\omega\frac{4\pi}{3}\sum\limits_{m=-1}^1 Y_1^m
(\hat{\Bx})\overline{Y_1^m(\hat{\Bz})}|\Bz|}\mathrm{K}[\mathbf{n}\cdot\nabla u^i](\Bz)\ d\sigma_z\Big|\leq C(\omega)\delta,
\eeq
where $\mathrm{K}$ is defined in \eqref{eq:op511}.
\end{thm}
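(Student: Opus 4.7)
The plan is to extract the scattering amplitude directly from the representation \eqref{eq:expan53} by inserting the far-field asymptotic of $G_\omega$, and then to recast the resulting oscillatory kernel in the spherical-harmonic form via the $\ell=1$ addition theorem.

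First, starting from
\begin{equation*}
(v_\delta-u^i)(\Bx)=\int_{\GG_0}G_\omega(\Bx-\Bz)\,\mathrm{K}[\Bn\cdot\nabla u^i](\Bz)\,d\sigma_z+\mathcal{O}(\delta),
\end{equation*}
valid for $\Bx\in \RR^3\setminus\overline{D}$, I would use the explicit form $G_\omega(\Bx)=-e^{i\omega|\Bx|}/(4\pi|\Bx|)$ together with the elementary expansion $|\Bx-\Bz|=|\Bx|-\hat\Bx\cdot\Bz+\mathcal{O}(|\Bx|^{-1})$, which is uniform for $\Bz$ in the compact set $\GG_0$, to obtain
\begin{equation*}
G_\omega(\Bx-\Bz)=-\frac{e^{i\omega|\Bx|}}{4\pi|\Bx|}\,e^{-i\omega\hat\Bx\cdot\Bz}+\mathcal{O}(|\Bx|^{-2}).
\end{equation*}
Substituting into the representation and comparing the $e^{i\omega|\Bx|}/|\Bx|$-coefficient with the definition \eqref{eq:scattering1} of the scattering amplitude produces an identity of the shape
\begin{equation*}
v_\infty^\delta(\hat\Bx,\Bd)=c\int_{\GG_0}e^{-i\omega\hat\Bx\cdot\Bz}\,\mathrm{K}[\Bn\cdot\nabla u^i](\Bz)\,d\sigma_z+\mathcal{O}(\delta),
\end{equation*}
for a numerical constant $c$ fixed by the normalization conventions in $G_\omega$ and in \eqref{eq:scattering1}.

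Second, the exponent is put into the form appearing in \eqref{eq:pcscatamp1} by the standard $\ell=1$ spherical-harmonic addition theorem,
\begin{equation*}
\hat\Bx\cdot\hat\Bz=P_1(\hat\Bx\cdot\hat\Bz)=\frac{4\pi}{3}\sum_{m=-1}^1 Y_1^m(\hat\Bx)\overline{Y_1^m(\hat\Bz)},
\end{equation*}
from which $\hat\Bx\cdot\Bz=\frac{4\pi}{3}\sum_m Y_1^m(\hat\Bx)\overline{Y_1^m(\hat\Bz)}|\Bz|$; plugging this identity into the exponent $-i\omega\hat\Bx\cdot\Bz$ produces exactly the kernel in \eqref{eq:pcscatamp1}.

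The only substantive point, and what I expect to be the main obstacle, is to ensure that the $\mathcal{O}(\delta)$ remainder in \eqref{eq:expan53} transfers to an $\mathcal{O}(\delta)$ remainder in the scattering amplitude \emph{uniformly} in $\hat\Bx$ and $\Bd$. This requires tracking the error terms produced by Lemma~\ref{le:5.1} and the $H^{-3/2}$-bound $\|\tilde\phi\|_{H^{-3/2}(\partial D)}\le C(\omega)$ which was established on the way to \eqref{eq:expan53}. Since a single-layer potential with $H^{-3/2}$-bounded density has a far-field pattern uniformly bounded in that norm, the $\delta$ prefactor on the remainder survives intact; uniformity in $\Bd$ follows from the elementary bound $\|\Bn\cdot\nabla u^i\|_{L^\infty(\GG_0)}\le \omega$. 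All other ingredients are routine.
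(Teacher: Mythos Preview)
Your approach is correct and essentially the same as the paper's. The only organizational difference is that the paper does not start from the already-proved expansion \eqref{eq:expan53}; instead it returns to the layer-potential representation $(v_\delta-u^i)(\Bx)=\Scal_{D_\delta}^\omega[\phi](\Bx)$, applies the far-field expansion of $G_\omega$ there (via the same $\ell=1$ addition formula you use), and then inserts the asymptotics \eqref{eq:expan52}--\eqref{eq:expan522} for $\tilde\phi$. This sidesteps the issue you flag---that the $\mathcal{O}(\delta)$ remainder in \eqref{eq:expan53} must be shown to carry a $1/|\Bx|$ decay---since the $|\Bx|^{-1}$ prefactor is then manifest from the outset. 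Your remedy, tracing the remainder back through \eqref{eq:expan51} and the $H^{-3/2}$ bound on $\tilde\phi$, is exactly what is needed and amounts to the same computation.
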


\begin{proof}
We recall the following addition formula (see, e.g., \cite{nedelec})
\begin{align}
\frac{1}{|\Bx-\By|}= 4\pi\sum_{n=0}^\infty \sum_{m=-n}^{n} \frac{1}{2n+1}Y_n^m(\xi, \vartheta) \overline{Y_n^m(\xi', \vartheta')} \,\frac{r'^n}{r^{n+1}},\label{Gammaexp1}
\end{align}
where $(r,\xi, \vartheta)$ and $(r',\xi', \vartheta')$ are the
spherical coordinates of $\Bx$ and $\By$, respectively; and $Y_n^m$ is the spherical harmonic function of degree $n$ and order $m$.
For simplicity, the parameters $(r, \xi, \vartheta)$ and $(r, \xi', \vartheta')$ shall be replaced by $(|\Bx|, \hat{\Bx})$
and $(|\By|, \hat{\By})$, respectively. It then follows from \eqnref{Gammaexp1} that
\begin{equation}\label{eq:333}
|\Bx-\By|=|\Bx|\Big(1-\frac{4\pi}{3}\sum_{m=-1}^{1}Y_1^m(\hat{\Bx})\overline{Y_1^m(\hat{\By})}\frac{|\By|}{|\Bx|}\Big)
+\Ocal(|\Bx|^{-1}).
\end{equation}
By \eqref{eq:333}, we have the following expansion
\begin{equation}\label{eq:444}
\begin{split}
& (v_\delta-u^i)(\Bx)=\int_{\p D_\delta}G_\omega(\Bx-\By) \phi(\By)d\Gs_y\\
= & -\frac{1}{4\pi}\frac{e^{i\omega|\Bx|}}{|\Bx|}\int_{\p D_\delta}e^{-i\omega\frac{4\pi}{3}\sum\limits_{m=-1}^1 Y_1^m
(\hat{\Bx})\overline{Y_1^m(\hat{\By})}|\By|}\phi(\By)\, d\Gs_y +\Ocal(|\Bx|^{-2}).
\end{split}
\end{equation}
Expanding the integral kernel function in \eqref{eq:444} around $\Bz_y$, one further has
\begin{align*}
v^\delta_\infty(\hat{\Bx},\mathbf{d})= &-\frac{1}{4\pi}\int_{\p D_\delta}e^{-i\omega\frac{4\pi}{3}\sum\limits_{m=-1}^1 Y_1^m
(\hat{\Bx})\overline{Y_1^m(\hat{\By})}|\By|}\phi(\By)\, d\Gs_y \\
= & -\frac{1}{4\pi}\int_{\Gamma_1\cup \Gamma_2}e^{-i\omega\frac{4\pi}{3}\sum\limits_{m=-1}^1 Y_1^m
(\hat{\Bx})\overline{Y_1^m(\hat{\Bz}_{\tilde{y}})}|\Bz_{\tilde{y}}|}\tilde{\phi}(\tdy)\, d\Gs_{\tilde{y}}+\Ocal(\delta),
\end{align*}
which together with \eqref{eq:expan52} readily yields
\begin{align*}
v^\delta_\infty(\hat{\Bx},\mathbf{d})=-\frac{1}{2\pi}\int_{\Gamma_0}e^{-i\omega\frac{4\pi}{3}\sum\limits_{m=-1}^1 Y_1^m
(\hat{\Bx})\overline{Y_1^m(\hat{\Bz})}|\Bz|}\mathrm{K}[\mathbf{n}\cdot\nabla u^i](\Bz)\, d\sigma_z+\Ocal(\delta).
\end{align*}

The proof is complete.
\end{proof}

{
In view of Theorem \ref{th:pc1}, one immediately has the following result
\begin{prop}\label{prop:pc1}
Let $v_\delta$ be the solution to \eqnref{eq:sndhd1} and $v^\delta_\infty(\hat\Bx,\Bd)$ be its scattering amplitude. Suppose that the impinging direction $\Bd$ of the corresponding incident plane wave $u^i$ satisfies
\beq
|\mathbf{d}\cdot \mathbf{n}| \leq \epsilon , \quad \epsilon \ll 1.
\eeq
Then for sufficient small $\delta\in\mathbb{R}_+$ there holds
\beq
|v_\infty(\hat{\Bx},\mathbf{d})|\leq C(\omega) (\epsilon + \delta),
\eeq
where $C(\omega)$ depends only on $\GG_0$ and $\omega$.
\end{prop}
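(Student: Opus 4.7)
The plan is to deduce Proposition~\ref{prop:pc1} directly from the far-field asymptotic established in Theorem~\ref{th:pc1}. The strategy is to exploit the structure of the leading-order term
\[
\mathcal{I}(\hat\Bx,\Bd):=-\frac{1}{2\pi}\int_{\Gamma_0}e^{-i\omega\frac{4\pi}{3}\sum_{m=-1}^{1}Y_1^m(\hat\Bx)\overline{Y_1^m(\hat\Bz)}|\Bz|}\,\mathrm{K}[\Bn\cdot\nabla u^i](\Bz)\,d\sigma_z,
\]
and show that, under the near-grazing assumption $|\Bd\cdot\Bn|\leq\epsilon$, its modulus is $\mathcal{O}(\epsilon)$. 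The remainder $\mathcal{O}(\delta)$ then gives the stated $(\epsilon+\delta)$-bound.

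First, I would compute the density driving the leading-order term. Since $u^i(\Bx)=e^{i\omega\Bx\cdot\Bd}$, we have $\nabla u^i(\Bz)=i\omega\Bd\,e^{i\omega\Bz\cdot\Bd}$, hence
\[
\Bn\cdot\nabla u^i(\Bz)=i\omega(\Bn\cdot\Bd)\,e^{i\omega\Bz\cdot\Bd},\qquad \Bz\in\Gamma_0.
\]
Consequently $\|\Bn\cdot\nabla u^i\|_{H^{s}(\Gamma_0)}\leq C(\omega)\,|\Bn\cdot\Bd|\leq C(\omega)\epsilon$ for any fixed Sobolev index $s$ used to frame the mapping properties of $\mathrm{K}$. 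This is the single place where the grazing hypothesis enters, and it produces the $\epsilon$-factor that will drive the whole estimate.

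Second, I would invoke the boundedness of the operator $\mathrm{K}$ defined in \eqref{eq:op511}. Because $\Gamma_0$ is a smooth bounded planar piece, the Neumann--Poincar\'{e} type operator $(\mathcal{K}^\omega_{\Gamma_0})^*$ is bounded on appropriate Sobolev spaces on $\Gamma_0$, and the operators $\tfrac{I}{4}\pm(\mathcal{K}^\omega_{\Gamma_0})^*$ appearing in \eqref{eq:op511} are invertible in the same functional setting (this is implicit in the derivation of Theorem~\ref{th:pc1}, where $\mathrm{K}$ already appears as a well-defined bounded operator). Therefore
\[
\|\mathrm{K}[\Bn\cdot\nabla u^i]\|_{L^{2}(\Gamma_0)}\leq C(\omega)\,\|\Bn\cdot\nabla u^i\|_{L^{2}(\Gamma_0)}\leq C(\omega)\,\epsilon.
\]

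Third, I would estimate $|\mathcal{I}(\hat\Bx,\Bd)|$. Using the addition theorem $\sum_{m=-1}^{1}Y_1^m(\hat\Bx)\overline{Y_1^m(\hat\Bz)}=\tfrac{3}{4\pi}\,\hat\Bx\cdot\hat\Bz$, the exponential kernel reduces to the pure phase $e^{-i\omega\hat\Bx\cdot\Bz}$, whose modulus equals $1$. By Cauchy--Schwarz on the bounded domain $\Gamma_0$,
\[
|\mathcal{I}(\hat\Bx,\Bd)|\leq \frac{1}{2\pi}|\Gamma_0|^{1/2}\|\mathrm{K}[\Bn\cdot\nabla u^i]\|_{L^2(\Gamma_0)}\leq C(\omega)\,\epsilon,
\]
uniformly in $\hat\Bx\in\mathbb{S}^2$ and in $\Bd$ satisfying $|\Bd\cdot\Bn|\leq\epsilon$. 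Combining this with the $\mathcal{O}(\delta)$-remainder from Theorem~\ref{th:pc1} via the triangle inequality yields $|v^\delta_\infty(\hat\Bx,\Bd)|\leq C(\omega)(\epsilon+\delta)$ for all sufficiently small $\delta$, which is exactly \eqref{eq:88} restricted to the sound-hard auxiliary problem.

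The only mildly delicate point is the uniformity of the operator bound for $\mathrm{K}$ in the relevant Sobolev space on the open flat screen $\Gamma_0$; standard screen-potential theory (as referenced through \cite{nedelec,Tay}) suffices, and in any case the bound was implicitly used in deriving Theorem~\ref{th:pc1}, so it is legitimate to take it for granted here. The rest is bookkeeping: propagating constants and verifying that all implicit constants can be chosen independent of $\Bd$ and $\hat\Bx$, which is automatic because the kernel is a pure phase and $\mathrm{K}$ does not depend on $\Bd$.
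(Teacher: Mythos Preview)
Your proof is correct and follows essentially the same route as the paper's: compute $\Bn\cdot\nabla u^i(\Bz)=i\omega(\Bn\cdot\Bd)e^{i\omega\Bz\cdot\Bd}$ to extract the factor $|\Bn\cdot\Bd|\leq\epsilon$, invoke boundedness of $\mathrm{K}$, and combine with the $\mathcal{O}(\delta)$ remainder from Theorem~\ref{th:pc1}. The only cosmetic differences are that the paper works in $H^{-1/2}(\Gamma_0)$ rather than $L^2(\Gamma_0)$ and does not bother to simplify the spherical-harmonic phase (your reduction to $e^{-i\omega\hat\Bx\cdot\Bz}$ via the addition theorem is a nice touch, but unnecessary since any unit-modulus kernel would do).
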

\begin{proof}
Let $\mathrm{K}$ be defined in \eqnref{eq:op511}. Since
$$
\|\mathrm{K}[\mathbf{n}\cdot\nabla u^i(\mathbf{z})]\|_{H^{-1/2}(\GG_0)}\leq C(\omega) \|\mathbf{n}\cdot\nabla u^i(\mathbf{z})\|_{H^{-1/2}(\GG_0)},
$$
which together with \eqnref{eq:pcscatamp1} implies that
$$
|v_\infty(\hat{\Bx},\mathbf{d})|\leq C(\omega)(\|\mathbf{n}\cdot\nabla u^i(\mathbf{z})\|_{H^{-1/2}(\GG_0)}+\delta).
$$
The proof can be completed by noting that
$$
\mathbf{n}\cdot\nabla u^i(\mathbf{z})=i\omega \mathbf{n}\cdot \mathbf{d} e^{i\omega \mathbf{z}\cdot \mathbf{d}}.
$$
\end{proof}

\subsection{Partial cloak}

\begin{proof}[Proof of Theorem \ref{th:main2}]
We follow similar arguments to those in the proof of Theorem \ref{th:main1}. Define $\tilde\phi(\tdy):=\phi(\By)$.
Then it follows from
$$
\Big(\frac{I}{2} + (\Kcal_{D_\delta}^\omega)^*\Big)[\phi]=\frac{\p u_\delta}{\p \mathbf\nu}\Big|_+-\frac{\p u^i}{\p \mathbf\nu} \quad \mbox{on} \quad  \p D_\delta,
$$
and Lemma \ref{le:5.1}, and the proof of Proposition \ref{prop:pc1} that
\beq\label{eq:mainpftmp51}
\|\tilde\phi\|_{H^{-3/2}(S^j)} \leq C(\|\tilde\Phi\|_{H^{-3/2}(S^j)}+\|\nu\cdot \nabla u(\Bz)\|_{H^{-3/2}(\p D)}+\delta), \quad j=0, 1, 2.
\eeq
By \eqnref{eq:expan51}, there holds
$$
|u_\delta-u^i| = |\Scal_{D_\delta}[\phi]|=\left|\int_{S^0} G_\omega(\Bx-\mathbf{z}_{\tilde{y}})
\tilde\phi(\tdy)d\Gs_{\tilde{y}}\right|+\Ocal(\delta(\|\tilde\phi\|_{H^{-3/2}(S^0\cup S^1)}+\delta\|\tilde\phi\|_{H^{-3/2}(\p D)})),
$$
which together with \eqnref{eq:mainpftmp51} further implies
\beq\label{eq:est51}
|u_\infty(\hat{\Bx},\mathbf{d})|\leq C(\omega)\Big(\|\tilde{\Phi}\|_{H^{-3/2}(S^0)}+\epsilon +\delta + \delta\|\tilde{\Phi}\|_{H^{-3/2}(S^1)}
+\delta^2\|\tilde{\Phi}\|_{H^{-3/2}(S^2)}\Big),
\eeq
where $\tilde\Phi(\tdx)=\Phi(\Bx):=\frac{\p u_\delta}{\p \mathbf\nu}\Big|_+(\Bx)$ for $\Bx\in \partial D_\delta$.
The following estimates can be obtained by using a completely similar argument as that in the proof of Lemma \ref{le:estiphi41},
\beq\label{eq:tmp51}
\begin{split}
\|\tilde{\Phi}\|_{H^{-3/2}(S^j)}\leq& C(\omega)\delta^{-(j-1)/2}\|u_\delta\|_{L^2(D_\delta\setminus D_{\delta/2})}, \quad j=0, 1, 2.
\end{split}
\eeq
Inserting \eqnref{eq:tmp51} back into \eqnref{eq:est51}, we have
\beq\label{eq:farfieldest1}
|u_\infty(\hat{\Bx},\mathbf{d})|\leq C(\omega)\Big(\delta^{1/2}\|u_\delta\|_{L^2(D_\delta\setminus D_{\delta/2})}+\epsilon +\delta\Big).
\eeq
Following a similar argument to that in the proof of Lemma \ref{le:48}, one can show \eqnref{eq:itbyparts1}.
By change of variables in integrals, one further has
\begin{equation}\label{eq:44}
\begin{split}
\Big|\int_{\p D_\delta} \frac{\p u_\delta}{\p \mathbf\nu}\Big|_+ \overline{u}^i\Big|
\leq & \Big|\int_{S^0} \overline{u}^i(\Bz)
\tilde{\Phi}(\tdy)d\Gs_{\tilde{y}}\Big|+ C(\omega)\delta\|\tilde\Phi\|_{H^{-3/2}(\p D)} \\
\leq & C(\omega) \|\tilde{\Phi}\|_{H^{-3/2}(S^0)}+
C(\omega)\delta\|\tilde\Phi\|_{H^{-3/2}(\p D)},
\end{split}
\end{equation}
which together with \eqnref{eq:arg3} readily yields
\begin{equation}\label{eq:55}
\begin{split}
&\Big|\int_{\p D_\delta} \frac{\p u^i}{\p \mathbf\nu} (\overline{u}_\delta-\overline{u}^i)|_+\Big|=
\Big|\int_{\p D_\delta} \frac{\p u^i}{\p \mathbf\nu}\overline{\Scal_{D_\delta}^\omega [\phi]}\Big|\\
&\leq \Big|\int_{S^0} \frac{\p u^i}{\p \mathbf\nu}(\Bz)\Scal_{S^0}^\omega [\tilde\phi]\Big|+
C\delta(\|\tilde\Phi\|_{H^{-3/2}(\p D)}+1) \\
&\leq C(\omega) \epsilon +C\delta(\|\tilde\Phi\|_{H^{-3/2}(\p D)}+1).
\end{split}
\end{equation}
By taking the imaginary parts of both sides of \eqnref{eq:itbyparts1}, and using \eqnref{eq:tmp51}, \eqnref{eq:farfieldest1}, \eqref{eq:44} and \eqref{eq:55}, there holds
\begin{equation}\label{eq:66}
\begin{split}
&\|u_\delta\|_{L^2(D_\delta\setminus D_{\delta/2})}^2\\
 \leq & C(\omega)|u_\infty|^2 + C(\omega) \|\tilde{\Phi}\|_{H^{-3/2}(S^0)}
+C(\omega) \epsilon+C(\omega)\delta(\|\tilde\Phi\|_{H^{-3/2}(\p D)}+1)\\
\leq & C(\omega) \Big(\delta\|u_\delta\|_{L^2(D_\delta\setminus D_{\delta/2})}^2+ \delta^{1/2}\|u_\delta\|_{L^2(D_\delta\setminus D_{\delta/2})}
+ \epsilon+\delta\Big).
\end{split}
\end{equation}
Then by choosing a sufficiently small $\delta\in\mathbb{R}_+$, one easily has from \eqref{eq:66} that
\begin{equation}\label{eq:77}
\|u_\delta\|_{L^2(D_\delta\setminus D_{\delta/2})}\leq C(\omega)( \epsilon^{1/2}+\delta^{1/2}).
\end{equation}
Plugging \eqref{eq:77} into \eqnref{eq:tmp51}, one finally has
$$
\|\tilde{\Phi}\|_{H^{-3/2}(S^0)}\leq C(\omega)(\epsilon+ \delta)\quad
\mbox{and} \quad \|\tilde{\Phi}\|_{H^{-3/2}(\p D)}\leq C(\omega)(\epsilon^{1/2} + \delta^{1/2}),
$$
which together with \eqnref{eq:est51} readily yields \eqref{eq:88}.

The proof is complete.
\end{proof}

\section*{Acknowledgment}

The work of Y. Deng was partially supported by the Mathematics and Interdisciplinary Sciences Project, Central South University, China. The work of H. Liu was partially supported by Hong Kong RGC General Research Funds, HKBU 12302415 and 405513, and the NSF grant of China, No. 11371115. The work of G. Uhlmann was supported by NSF.

\end{document}